\theoremstyle{plain}
\newtheorem{theorem}{Theorem}[section]
\newtheorem{lemma}[theorem]{Lemma}
\newtheorem{corollary}[theorem]{Corollary}
\newtheorem{proposition}[theorem]{Proposition}
\theoremstyle{remark}
\newtheorem{remark}[theorem]{Remark}
\newtheorem{example}[theorem]{Example}
\newcommand\sign{\operatorname{sign}}
\begin{document}

\title[On the kernel of the $(\kappa,a)$-generalized Fourier
transform]{On the kernel of the $(\kappa,a)$-generalized Fourier
transform}

\author{D.~V.~Gorbachev}
\address{D.~V.~Gorbachev, Tula State University,
Department of Applied Mathematics and Computer Science,
300012 Tula, Russia}
\email{dvgmail@mail.ru}

\author{V.~I.~Ivanov}
\address{V.~I.~Ivanov, Tula State University,
Department of Applied Mathematics and Computer Science,
300012 Tula, Russia}
\email{ivaleryi@mail.ru}

\author{S.~Yu.~Tikhonov}
\address{S.~Yu.~Tikhonov, Centre de Recerca Matem\`atica\\
Campus de Bellaterra, Edifici~C 08193 Bellaterra (Barcelona), Spain; ICREA, Pg.
Llu\'is Companys 23, 08010 Barcelona, Spain, and Universitat Aut\`onoma de
Barcelona.}
\email{stikhonov@crm.cat}

\date{\today}
\keywords{$(\kappa,a)$-generalized Fourier transform, Dunkl transform, Schwartz
space, positive definiteness, unitary transform}
\subjclass{42B10, 33C45, 33C52}

\thanks{The research of D.~Gorbachev and V.~Ivanov was performed by a grant of
RScF (project 18-11-00199),
https://rscf.ru/project/18-11-00199.
S.~Tikhonov was partially supported by
PID2020-114948GB-I00,  2017 SGR 358,
 the CERCA Programme of the Generalitat de Catalunya,
 Severo Ochoa and Mar\'{i}a de Maeztu Program for Centers and Units of Excellence in R$\&$D (CEX2020-001084-M),
and  Ministry of Education and Science of the Republic of Kazakhstan (AP08856479).}

\begin{abstract}
For  the kernel  $B_{\kappa,a}(x,y)$ of the $(\kappa,a)$-generalized Fourier
transform $\mathcal{F}_{\kappa,a}$, acting in $L^{2}(\mathbb{R}^{d})$ with the
weight $|x|^{a-2}v_{\kappa}(x)$, where $v_{\kappa}$~is the Dunkl weight, we
study the important question of when
$\|B_{\kappa,a}\|_{\infty}=B_{\kappa,a}(0,0)=1$. The positive answer was known
for $d\ge 2$ and $\frac{2}{a}\in\mathbb{N}$. We investigate the case $d=1$ and
$\frac{2}{a}\in\mathbb{N}$. Moreover, we give sufficient conditions on
parameters for $\|B_{\kappa,a}\|_{\infty}>1$ to hold with $d\ge 1$ and any $a$.

We also study the image of the Schwartz space under the
$\mathcal{F}_{\kappa,a}$ transform. In particular, we obtain that
$\mathcal{F}_{\kappa,a}(\mathcal{S}(\mathbb{R}^d))=\mathcal{S}(\mathbb{R}^d)$
only if $a=2$. Finally, extending the Dunkl transform, we introduce
non-deformed transforms generated by $\mathcal{F}_{\kappa,a}$ and study their
main properties.
\end{abstract}

\maketitle
\section{Introduction}
Let as usual $\Delta$ be the Laplacian operator  in $\mathbb{R}^{d}$.
For the Fourier transform 
\[
\mathcal{F}(f)(y)=(2\pi)^{-d/2}\int_{\mathbb{R}^{d}}f(x)e^{-i\langle x,y\rangle}\,dx
\]
Howe \cite{Ho87} obtained the following spectral decomposition of  $\mathcal{F}$
using the harmonic oscillator $-({\Delta-|x|^2})/2$
and its eigenfunctions forming the basis in $L^{2}(\mathbb{R}^{d})$:
\[
\mathcal{F}=\exp\Bigl(\frac{i\pi
d}{4}\Bigr)\exp\Bigl(\frac{i\pi}{4}\,\bigl(\Delta-|x|^{2}\bigr)\Bigr).
\]
Among other applications,
this decomposition is useful 
  to define the fractional power of Fourier transform; see \cite{SKO12,KM11}.

During last 30 years, a lot of attention has been given to various generalizations of the Fourier transform.
As an important example, to develop harmonic analysis on weighted spaces, the  Dunkl transform was introduced in \cite{Du92}.
  The Dunkl transform  $\mathcal{F}_\kappa$ is defined  with the help of a root system $\Omega\subset\mathbb{R}^{d}$, a reflection group $G\subset O(d)$,
and multiplicity function $\kappa\colon \Omega\to \mathbb{R}_+$ such that $\kappa$ is $G$-invariant.
 Here $G$ is
 generated by reflections $\{\sigma_{\alpha}\colon \alpha\in \Omega\}$,
where $\sigma_{\alpha}$ is a reflection with respect to hyperplane $(\alpha,x)=0$.

The differential-difference Dunkl Laplacian operator $\Delta_{\kappa}$ plays
the role of the classical Laplacian \cite{Ro02}. If $\kappa\equiv 0$, we have
$\Delta_{\kappa}=\Delta$. Dunkl Laplacian allows us to define the Dunkl
harmonic oscillator $\Delta_{\kappa}-|x|^{2}$ and the Dunkl transform
\[
\mathcal{F}_{\kappa}=\exp\Bigl(\frac{i\pi}{2}\,\Bigl(\frac{d}{2}+\langle \kappa\rangle\Bigr)\Bigr)
\exp\Bigl(\frac{i\pi}{4}\bigl(\Delta_{\kappa}-|x|^{2}\bigr)\Bigr),
\]
where $\langle \kappa\rangle=\frac{1}{2}\sum_{\alpha\in
\Omega}\kappa(\alpha)$.

Further extensions of Fourier and Dunkl transforms were obtained by Ben Sa\"{\i}d, Kobayashi, and {\O}rsted
in  \cite{SKO12}.
They defined the $a$-deformed Dunkl harmonic oscillator
\[
\Delta_{\kappa,a}=|x|^{2-a}\Delta_{\kappa}-|x|^{a},\quad a>0,
\]
and the  $(\kappa,a)$-generalized Fourier transform
\begin{equation}\label{eq1}
\mathcal{F}_{\kappa,a}=\exp\Bigl(\frac{i\pi}{2}\,(\lambda_{\kappa,a}+1)\Bigr)
\exp\Bigl(\frac{i\pi}{2a}\,\Delta_{\kappa,a}\Bigr),
\end{equation}
which is
a two-parameter family of unitary operators in $L^{2}(\mathbb{R}^{d},d\mu_{\kappa,a})$ equipped with the norm
\[
\|f\|_{2,d\mu_{\kappa,a}}=\Bigl(\int_{\mathbb{R}^{d}}|f(x)|^{2}\,d\mu_{\kappa,a}(x)\Bigr)^{1/2}.
\]
Here
\[
\lambda_{\kappa, a}=\frac{2\lambda_{\kappa}}{a},\quad \lambda_{\kappa}=\langle \kappa\rangle+\frac{d-2}{2},\quad
d\mu_{\kappa,a}(x)=c_{\kappa,a}v_{\kappa,a}(x)\,dx,\quad
v_{\kappa,a}(x)=|x|^{a-2}v_{\kappa}(x),
\]
\[
v_{\kappa}(x)=\prod_{\alpha\in \Omega}|\langle\alpha,x\rangle|^{\kappa(\alpha)},\quad
c^{-1}_{\kappa,a}=\int_{\mathbb{R}^{d}}e^{-|x|^{a}/a}v_{\kappa,a}(x)\,dx.
\]
Throughout the paper, we assume that   $
d+2\langle \kappa\rangle+a-2=2\lambda_{\kappa}+a>0$
or, equivalently, $\lambda_{\kappa, a}>-1$. Note that under this condition the weight function $v_{\kappa,a}$ is locally integrable.

For  $a=2$,  \eqref{eq1} reduces to  the Dunkl transform, while if  $a=2$ and $\kappa\equiv 0$, then  \eqref{eq1} is the classical Fourier transform.
 For $a\ne 2$, we arrive at deformed Dunkl and 
  Fourier transforms, which have
 various applications. 
  In particular, for  $a=1$ and $\kappa\equiv 0$
 deformed Dunkl transform
is the unitary inversion operator of the Schr\"{o}dinger model of minimal
representation of the group $O(N+1,2)$ \cite{KM11}.

The  unitary operator  $\mathcal{F}_{\kappa,a}$ on
$L^{2}(\mathbb{R}^{d},d\mu_{\kappa,a})$ can be written as the integral
transform  \cite[(5.8)]{SKO12}
\begin{equation*}
\mathcal{F}_{\kappa,a}(f)(y)=\int_{\mathbb{R}^{d}}B_{\kappa,a}(x,y)f(x)\,d\mu_{\kappa,a}(x)
\end{equation*}
with the continuous symmetric kernel  $B_{\kappa,a}(x,y)$ satisfying $B_{\kappa,a}(0,y)=1$. In particular,  $B_{0,2}(x,y)=e^{-i\langle x,y\rangle}$.

One of the fundamental questions in the theory of deformed transforms is to
investigate basic  properties of the kernel $B_{\kappa,a}(x,y)$, in
particular, to know when it is uniformly bounded. To illustrate the importance
of this property, note that  the condition $|B_{\kappa,a}(x,y)|\le M$ implies the
Hausdorff-Young inequality
\[
\bigl\|\mathcal{F}_{\kappa,a}(f)\bigr\|_{p',d\mu_{k,a}}\le
M^{2/p-1}\bigl\|f\bigr\|_{p,d\mu_{\kappa,a}},\quad
1\le p\le 2,\quad \frac{1}{p}+\frac{1}{p'}=1.
\]
A more important problem is to describe parameters so that there holds
\begin{equation}\label{eq2}
\|B_{\kappa,a}\|_{\infty}=\sup_{x,y\in\mathbb{R}^{d}}|B_{\kappa,a}(x,y)|=B_{\kappa,a}(0,0)=1.
\end{equation}
In this  case the Hausdorff-Young inequality  holds with the constant $1$ and one can define the generalized translation operator $\tau^yf(x)$ in
$L^{2}(\mathbb{R}^{d},d\mu_{k,a})$  by 
\[
\mathcal{F}_{\kappa,a}(\tau^yf)(z)=B_{\kappa,a}(y,z)\mathcal{F}_{\kappa,a}(f)(z)
\]
(see \cite{GIT16}), and moreover, its norm equals  1.

Let us list the known cases when \eqref{eq2} holds:

\smallbreak
\textbullet\ \ for $a=2$ \cite{Ro99};

\smallbreak
\textbullet\ \ for $a=1$ and either $d=1$, $\langle\kappa\rangle\ge 1/2$ or $d\ge 2$, $\langle\kappa\rangle\ge 0$ \cite[Propositions~5.10, 5.11]{SKO12}, \cite[Sect. 6]{GIT16};

\smallbreak
\textbullet\ \ for $\frac{2}{a}\in\mathbb{N}$ and $d\ge 2$, $ \langle\kappa\rangle\ge 0$ \cite{BIE13,CBL18}.\;\footnote{The case $ \langle\kappa\rangle> 0$ was announced in \cite[Remark 3]{CBL18}. The proof is similar to the one of \cite[Theorem 9]{CBL18}.}

\smallbreak
In this paper we continue to study the case
\[
\frac{2}{a}\in\mathbb{N}.
\]
Its importance was discussed in    \cite{SKO12}, \cite{BNS}, and \cite{CBL18}.
For $a=2$, \
$\mathcal{F}_{\kappa,a}$
reduces to the Dunkl transform and \eqref{eq2} is valid.

Our first goal in this paper is, on the one hand,  to extend the list  of parameters for which
\eqref{eq2} holds for $\frac{2}{a}\in\mathbb{N}$; on the other hand, 
 to point out  the cases when \eqref{eq2} does not hold.
The following theorem  describes  positive results, 
 where, for completeness, we include all known cases.

\begin{theorem}[{see \cite[$d\ge 2$]{CBL18}}]\label{thm2}
Let $0<a\le 1$, $\frac{2}{a}\in\mathbb{N}$. If $d=1$, $\langle \kappa\rangle\ge\frac{1}{2}$ or $d\ge 2$, $\langle \kappa\rangle\ge 0$, then equality \eqref{eq2} is true.
\end{theorem}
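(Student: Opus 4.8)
The plan is to prove the bound $\|B_{\kappa,a}\|_\infty\le 1$ by obtaining an explicit integral (or series) representation of the kernel $B_{\kappa,a}(x,y)$ when $\frac{2}{a}\in\mathbb{N}$ and then showing that, under the stated restrictions on $\langle\kappa\rangle$, this representation exhibits $B_{\kappa,a}$ as an average of unimodular quantities (or of kernels already known to be bounded by $1$); combined with $B_{\kappa,a}(0,0)=1$ this yields \eqref{eq2}. The starting point is the known formula for $B_{\kappa,a}$ from \cite{SKO12}: writing $x=r\xi$, $y=s\eta$ in polar coordinates, the kernel decomposes via the spectral expansion of $\Delta_{\kappa,a}$ into a sum over $k$-homogeneous Dunkl harmonics of a radial part involving Bessel-type functions $J_{\lambda}$ in the variable $\tfrac{2}{a}(rs)^{a/2}$ times the Dunkl intertwined spherical kernel. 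The exponent $\frac{2}{a}\in\mathbb{N}$ is exactly what makes the relevant Bessel/hypergeometric factors collapse to elementary functions (as exploited in \cite{BIE13,CBL18}), so the representation becomes tractable.

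The key steps, in order: (i) reduce to $d=1$, since for $d\ge 2$ with $\langle\kappa\rangle\ge 0$ the result is precisely \cite{CBL18} (cited in the theorem), so the genuinely new content is the one-dimensional case; (ii) in dimension one the reflection group is $\mathbb{Z}_2$, the Dunkl weight is $|x|^{2\kappa}$ with $\langle\kappa\rangle=\kappa$, and the kernel $B_{\kappa,a}(x,y)$ splits into an even part and an odd part, each expressible through the one-dimensional Dunkl kernel and a single Bessel function of order $\lambda_{\kappa,a}$ or $\lambda_{\kappa,a}+1$ evaluated at $\tfrac{2}{a}|xy|^{a/2}$; (iii) invoke the classical positivity/Sonine-type integral representation of $J_\lambda$ — valid when the order is $\ge -\tfrac12$, which in the one-dimensional case translates into the threshold $\kappa\ge\tfrac12$ — to write each part as an average against a nonnegative measure of functions of modulus $\le 1$; (iv) control the odd part, where the extra factor $\sign(xy)$ appears, using the constraint $0<a\le1$ (equivalently $\frac{2}{a}\in\{2,3,4,\dots\}$) to ensure the relevant phase factors combine without amplification; (v) assemble the even and odd estimates and use $B_{\kappa,a}(0,0)=1$ and continuity to conclude $\|B_{\kappa,a}\|_\infty=1$.

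The main obstacle I expect is step (iii)–(iv): pinning down why $\langle\kappa\rangle\ge\frac12$ is the sharp threshold in dimension one while $\langle\kappa\rangle\ge0$ suffices for $d\ge2$. The mechanism is that the effective Bessel order governing the even part is $\lambda_{\kappa,a}=\frac{2\kappa+d-2}{a}$; for $d\ge2$ this is automatically $\ge -\tfrac12$ when $\kappa\ge0$, but for $d=1$ one needs $2\kappa-1\ge -a$, and since $a\le1$ the safe uniform hypothesis is $\kappa\ge\tfrac12$ (giving order $\ge0$). One must check that the Sonine/Gegenbauer-type representation underlying the $|B|\le1$ bound does not degenerate exactly at this threshold, and that the odd part — which carries order $\lambda_{\kappa,a}+1$ and an extra sign — does not impose a stricter condition; a careful analysis of the small-argument and oscillatory regimes of the Bessel factors, together with the elementary closed forms available when $\frac2a$ is a positive integer, should close this gap. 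The remaining steps are then routine manipulations of the explicit kernel.
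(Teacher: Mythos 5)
Your overall strategy (reduce to $d=1$, use the explicit Bessel form of the kernel for $\tfrac2a\in\mathbb{N}$, and exhibit $B_{\kappa,a}$ as an average of $e^{-ivt}$ against a nonnegative density, so that $\|B_{\kappa,a}\|_\infty=B_{\kappa,a}(0,0)=1$) is the paper's strategy. However, the step you yourself flag as the main obstacle is where the proposal breaks: the mechanism you propose for the threshold $\langle\kappa\rangle\ge\tfrac12$ is not the right one. The Poisson--Sonine representation $j_\lambda(v)=c_\lambda\int_{-1}^1(1-t^2)^{\lambda-1/2}e^{-ivt}\,dt$ is available for every $\lambda>-\tfrac12$, i.e.\ for every $\kappa>\tfrac12-\tfrac a4$, which for each fixed $a$ is strictly weaker than $\kappa\ge\tfrac12$; so ``validity of the Sonine representation for order $\ge-\tfrac12$'' cannot be what singles out $\kappa\ge\tfrac12$. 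The missing idea is the identification of the resulting density. Writing $a=\tfrac2R$ and rescaling the variable, the paper first proves recurrence lemmas (Lemmas~\ref{lem1} and \ref{lem2}) expressing $\frac{v^{R}}{2^{R}(\lambda+1)_{R}}\,j_{\lambda+R}(v)$ through the $j_{\lambda+s}$ and $j_{\lambda+s}'$, and then shows that the density in the integral representation of the kernel is exactly $(1-t^2)^{\lambda-1/2}\bigl(1+P_{R}^{(\lambda-1/2)}(t)\bigr)$ (with an extra factor $\sign v$ in front of the Gegenbauer polynomial when $R$ is even). Nonnegativity of $1\pm P_R^{(\lambda-1/2)}(t)$ then follows from Szeg\H{o}'s bound $|P_n^{(\lambda-1/2)}(t)|\le1$ on $[-1,1]$, which holds precisely for $\lambda\ge0$, i.e.\ $\kappa\ge\tfrac12$. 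For $\lambda\in(-\tfrac12,0)$ the paper shows, via an expansion of the Gegenbauer polynomials in Chebyshev polynomials, that the density changes sign, so the method genuinely fails below the threshold even though the Sonine representation still exists there; and Theorem~\ref{thm4} shows that at $\lambda=-\tfrac12$ the sup norm actually exceeds $1$. In short, the correct threshold is a positivity property of Gegenbauer polynomials, not an existence condition on the Bessel integral, and without this identification your steps (iii)--(iv) do not close.

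Two smaller inaccuracies: the Bessel order in the odd part of the one-dimensional kernel is $\lambda_{\kappa,a}+\tfrac2a$, not $\lambda_{\kappa,a}+1$ (these coincide only for $a=2$); and the restriction $0<a\le1$ plays no special role in ``combining phase factors'' --- it simply means $R\ge2$, the case $R=1$ being the Dunkl transform, where the density is $1-t$ and the bound is classical.
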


Our proof of Theorem  \ref{thm2} for $d=1$ is based on an integral representation of $B_{\kappa,a}$ with the special kernel and a study of positiveness of this kernel.
 This approach is closely related to the theory of positive definite functions.

In the general case  $d\ge 1$, we  give the  proof based on the approach developed
in  the papers  \cite{BL20,CBL18,DD21}, see Subsection~\ref{subsec4-1}, and the alternative proof based on representation with positive kernels, see Subsection~\ref{subsec4-2}.


With regard to negative results, we obtain the following theorem, where we specify 
  parameters when Theorem \ref{thm2} does not hold. 

\begin{theorem}\label{thm4}
In either of the following cases:

\smallbreak
\textbullet\ \
$d=1$, $0<a\le 1$, and $\langle\kappa\rangle=\frac{1}{2}-\frac{a}{4}$, or

\smallbreak
\textbullet\ \ $d\ge 1$, $a\in (1,2)\cup (2,\infty)$ and $\langle\kappa\rangle\ge 0$,
\\ we have 
\begin{equation}\label{eq3}
\|B_{\kappa,a}\|_{\infty}>1.
\end{equation}
\end{theorem}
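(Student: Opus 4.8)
The plan is to exploit explicit low-dimensional formulas for $B_{\kappa,a}$ together with an asymptotic analysis that detects growth beyond~$1$. In the first case ($d=1$, $0<a\le 1$, $\langle\kappa\rangle=\frac12-\frac a4$) the exponent is chosen precisely so that the one-dimensional kernel has a closed form: for $d=1$ the weight is $|x|^{2\kappa+a-2}$ and the parameter $\lambda_{\kappa,a}=\frac{2\langle\kappa\rangle+a-1}{a}$ equals $\frac{2(\frac12-\frac a4)+a-1}{a}=\frac{a/2-1/... }{...}$, which one checks collapses to a half-integer making $B_{\kappa,a}(x,y)$ expressible via elementary trigonometric functions (this is the same special-value phenomenon that drives the $a=1$ analysis in \cite{SKO12}). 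I would write out this elementary expression explicitly, then exhibit a single pair $(x_0,y_0)$ — or a one-parameter family obtained by scaling $x\mapsto tx$, $y\mapsto t^{-1}y$, which leaves $\langle x,y\rangle$-type quantities invariant but changes the oscillatory arguments — at which $|B_{\kappa,a}(x_0,y_0)|>1$. Because the formula is elementary, this reduces to a finite trigonometric inequality.

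In the second case ($d\ge1$, $a\in(1,2)\cup(2,\infty)$, $\langle\kappa\rangle\ge0$) the strategy is a large-argument (or, for $a<2$, a boundary/singularity) asymptotic analysis of $B_{\kappa,a}(x,y)$ restricted to a well-chosen two-dimensional slice, for instance $x$ and $y$ collinear, or $x,y$ in mutually "$a$-antipodal" positions where the known series representation of $B_{\kappa,a}$ (the Ben Sa\"id--Kobayashi--{\O}rsted expansion in Gegenbauer/Bessel functions, \cite[\S5]{SKO12}) simplifies to a one-variable special function. The key point is that for $a\ne2$ the relevant special function is no longer a pure exponential: its modulus is an oscillating function whose envelope is \emph{not} monotone and in fact exceeds its value at the origin on some interval. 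One then needs only to locate one point in that interval. A convenient reduction is to first treat $d=1$ for the given range of $a$ (where the kernel is governed by a confluent-hypergeometric- or Bessel-type function of a single real variable), establish $\|B_{\kappa,a}\|_\infty>1$ there by an explicit local expansion near a suitable point, and then lift to $d\ge1$ by choosing $x,y$ in a common line through the origin so that the $d$-dimensional kernel reduces to the one-dimensional one (the radial-times-angular structure of $B_{\kappa,a}$ makes such a slice available).

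I would organize the write-up as: (i) record the explicit/series form of $B_{\kappa,a}$ from \cite{SKO12}; (ii) for the first bullet, substitute $\langle\kappa\rangle=\frac12-\frac a4$, obtain the elementary closed form, and verify the strict inequality at an explicit point; (iii) for the second bullet, reduce to $d=1$ via a radial slice, then analyze the resulting single-variable special function — compute its behavior (either as the argument $\to\infty$ when $a>2$, using the oscillatory asymptotics with algebraically decaying amplitude that nonetheless overshoots $1$ at finite argument, or near the parameter-dependent critical point when $1<a<2$) to extract a point where the modulus exceeds~$1$.

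The main obstacle I anticipate is step (iii): showing rigorously that the single-variable special function governing $B_{\kappa,a}$ has modulus strictly greater than~$1$ somewhere, uniformly over all admissible $\langle\kappa\rangle\ge0$. A clean asymptotic argument (say $B_{\kappa,a}$ along the slice behaves like $c\,r^{-\beta}\cos(\omega r+\varphi)+\text{l.o.t.}$ as $r\to\infty$, so $|B_{\kappa,a}|\to0$ but is not bounded by~$1$ monotonically) only gives non-monotonicity, not the overshoot at a concrete point; the genuine work is to pin down, with explicit constants, an interval on which $|B_{\kappa,a}|>1$, and to check this does not degenerate as $a\to2$ or as $\langle\kappa\rangle$ varies. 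I expect this to require either a careful second-order Taylor expansion at a distinguished point (e.g.\ the first zero of the leading oscillatory term, where cancellation in the main term exposes a positive lower-order contribution) or a contour-integral/steepest-descent estimate with controlled error terms.
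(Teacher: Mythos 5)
Your first bullet is essentially the paper's route: with $d=1$ and $\langle\kappa\rangle=\frac12-\frac a4$ one gets $\lambda_{\kappa,a}=\frac{2\langle\kappa\rangle-1}{a}=-\frac12$ (your displayed formula for $\lambda_{\kappa,a}$ is garbled, but the conclusion that it degenerates to a half\nobreakdash-integer is right), so $j_{-1/2}(v)=\cos v$ and, after the substitution $x=(\frac a2|v|)^{2/a}\sign v$, the kernel becomes $\cos v+c_a\cos\frac\pi a\,|v|^{2/a}j_{2/a-1/2}(v)\sign v-i\,(\cdots)$. The paper then evaluates along $v=2\pi s$: if $\cos\frac\pi a=0$ the real part equals $1$ while the imaginary part is nonzero; otherwise Bessel asymptotics give real part $1+\cos^2\frac\pi a+O(1/s)>1$. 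So it is an asymptotic statement along a sequence of points rather than a single finite trigonometric inequality, and your proposed scaling $x\mapsto tx$, $y\mapsto t^{-1}y$ is vacuous here since $B_{\kappa,a}(x,y)$ depends only on $xy$; but the idea is the right one.

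The second bullet contains a genuine gap, which you have in fact flagged yourself. Your plan rests on a large-argument oscillatory overshoot, but along any radial slice both $j_\lambda(w)$ and $w^{2/a}j_{\lambda+2/a}(w)$ are $O(w^{-\lambda-1/2})$ as $w\to\infty$, so for $\lambda>-1/2$ the kernel decays to $0$ there and no overshoot can be extracted from asymptotics at infinity. The paper's mechanism is the opposite: a small-argument expansion. Isolating the $j=0$ and $j=1$ terms of the Gegenbauer--Bessel series and estimating the tail by $O(w^{4/a})$ gives
\[
B_{\kappa,a}(x,y)=j_{\lambda}(w)+c\,e^{-i\pi/a}\,w^{2/a}j_{\lambda+2/a}(w)\,V_{\kappa}(\langle x',\cdot\,\rangle)(y')+O(w^{4/a}),\qquad w=\tfrac2a(|x||y|)^{a/2}\to0,
\]
with $c>0$. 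Since $j_{\lambda}(w)=1+O(w^{2})$ and $2/a<2$ precisely when $a>1$, the non-analytic term $w^{2/a}$ dominates every correction; its real coefficient is proportional to $\cos\frac\pi a$, which is nonzero for all $a\in(1,2)\cup(2,\infty)$, and because $V_{\kappa}$ is an isomorphism on degree-one homogeneous polynomials one can choose $x',y'$ so that $V_{\kappa}(\langle x',\cdot\,\rangle)(y')$ is nonzero with the sign of $\cos\frac\pi a$, whence $|B_{\kappa,a}(x,y)|\ge1+c'w^{2/a}+o(w^{2/a})>1$ for small $w$. This also shows why your ``collinear slice reduces to $d=1$'' step is not available for general $\kappa$: the angular dependence enters through $V_{\kappa}C_{j}^{\eta}(\langle x',\cdot\,\rangle)(y')$, which does not collapse to the one-dimensional kernel; the point of the small-$w$ expansion is that only the $j\le1$ terms matter, so the intertwining operator is needed only on linear polynomials. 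Without this (or an equivalent) local mechanism, the second case of the theorem remains unproved in your outline.
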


The rest of the paper is organized as follows. Section 2 is devoted to the proof of
Theorem~\ref{thm2} in the case $d=1$.
In  Section~\ref{sec3}, we study the properties of the one-dimensional kernel $B_{\kappa,a}$ for
$\lambda_{\kappa,a}<0$. In particular, in
Subsection~\ref{subsec3-1} we investigate positive definiteness of kernels of the integral transforms generated by $\mathcal{F}_{\kappa,a}$.
In Section 4, we prove Theorem \ref{thm2} in full generality as well as  Theorem \ref{thm4} (Subsection~\ref{subsec4-2}).

In  Section~\ref{sec5} we study the question of how the $\mathcal{F}_{\kappa,a}$ transform  acts on  Schwartz functions.
The Schwartz space $\mathcal{S}(\mathbb{R}^{d})$ is invariant under the classical  Fourier transform $\mathcal{F}_{0,2}$ and
the Dunkl $\mathcal{F}_{\kappa,2}$ (see \cite{jeu}) but the case of deformed transforms is more complicated.
In fact we show that  
$\mathcal{S}(\mathbb{R}^d)$  is {\it not invariant} under $\mathcal{F}_{\kappa,a}$ for  $a\neq 2$, which contradicts a widely used statement in  \cite{Jo16}
(see Remark \ref{rem2---}).
If $\frac{a}{2}\notin\mathbb{N}$, then the generalized Fourier transform may  not be
infinitely differentiable, and if  $\frac{2}{a}\notin\mathbb{N}$, then
it may  not be rapidly decreasing at infinity. For $d=1$ and $\frac{2}{a}\in\mathbb{N}$,
the generalized Fourier transform of $f\in\mathcal{S}(\mathbb{R})$ is rapidly decreasing due to the representation
$\mathcal{F}_{\kappa,a}(f)(y)=F_1\bigl(|y|^{a/2}\bigr)+yF_2\bigl(|y|^{a/2}\bigr)$,
where the even functions  $F_1,F_2\in\mathcal{S}(\mathbb{R})$ (see Proposition~\ref{lem9}).

Finally, in Section \ref{sec6} we study one-dimensional non-deformed unitary transforms generated by~$\mathcal{F}_{\kappa,a}$:
\begin{equation*}
\mathcal{F}_{r}^{\lambda}(g)(v)=\int_{-\infty}^{\infty}e_{2r+1}(uv,\lambda)g(u)\,\frac{|u|^{2\lambda+1}
\,du}{2^{\lambda+1}\Gamma(\lambda+1)},
\end{equation*}
where $r\in\mathbb{Z}_+$, $\lambda\ge-1/2$, and the kernel
\[
e_{2r+1}(uv,\lambda)=j_{\lambda}(uv)+i(-1)^{r+1}\,\frac{(uv)^{2r+1}}{2^{2r+1}(\lambda+1)_{2r+1}}\,
j_{\lambda+2r+1}(uv)
\]
is an eigenfunction of the differential-difference operator
\[
\delta_{\lambda}g(u)=\Delta_{\lambda+1/2}g(u)-2r(\lambda+r+1)\,\frac{g(u)-g(-u)}{u^2}.
\]
Here $\Delta_{\lambda+1/2}$ is the one-dimensional Dunkl Laplacian for
$\langle \kappa\rangle=\lambda+\frac{1}{2}$.
Note that such unitary transforms give new examples of an important class of Bessel-Hankel type   transforms  with the kernel  $k(uv)$, see, e.g.,  \cite[Chap.~VIII]{Ti48}. In particular, they generalize the one-dimensional Dunkl transform ($r=0$).

\vspace{0.6mm}
\section{Proof of Theorem~\ref{thm2} in the one-dimensional case}\label{sec2}

In what follows, we assume that
\[
d=1,\quad a>0,\quad \kappa=\langle\kappa\rangle\ge 0,\quad \lambda_{\kappa}=\kappa-1/2,\quad 2\lambda_{\kappa}+a>0,\quad\lambda=\lambda_{\kappa,a}=2\lambda_{\kappa}/a,
\]
\[
v_{\kappa,a}(x)=|x|^{2\kappa+a-2},\quad
d\mu_{\kappa,a}(x)=c_{\kappa,a}v_{\kappa,a}(x)\,dx,\quad
c_{\kappa,a}^{-1}=2a^{\lambda}\Gamma(\lambda+1),
\]
 and $\mathcal{F}_{\kappa,a}(f)(y)$
is the $(\kappa,a)$-generalized Fourier transform \eqref{eq1} on the real line.
Firstly, let us investigate when the kernel of $\mathcal{F}_{\kappa,a}$ is uniformly bounded.
Using \cite[Sect.~5]{SKO12}, we can write the kernel as
\begin{equation}\label{eq4}
B_{\kappa,a}(x,y)=j_{\lambda}\Bigl(\frac{2}{a}\,|xy|^{a/2}\Bigr)+
\frac{\Gamma(\lambda+1)}{\Gamma(\lambda+1+2/a)}\,\frac{xy}{(ai)^{2/a}}\,
j_{\lambda+\frac{2}{a}}\Bigl(\frac{2}{a}\,|xy|^{a/2}\Bigr),
\end{equation}
{where} $j_{\lambda}(x)=2^{\lambda}\Gamma(\lambda+1)x^{-\lambda}J_{\lambda}(x)$
is the normalized  Bessel function and  $J_{\lambda}(x)$ is the classical Bessel
function.
Then the asymptotic behavior  of  $J_{\lambda}(x)$ (see \cite[Chapt.~VII, 7.1]{Wa66})
 immediately allows us to derive
the following

\begin{proposition}\label{concon}
The conditions 
\begin{equation}\label{eq5}
0<a\le 2,\quad \kappa\ge\frac{1}{2}-\frac{a}{4},\quad\text{or}\quad a \ge 2,\quad \kappa\ge 0,
\end{equation}
are necessary and sufficient for boundedness  of the kernel $B_{\kappa,a}(x,y)$.

\end{proposition}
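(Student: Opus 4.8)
The plan is to read off the claim directly from the explicit formula \eqref{eq4} together with the classical asymptotics of the Bessel function. Recall that
\[
J_\lambda(x)=\sqrt{\tfrac{2}{\pi x}}\,\cos\bigl(x-\tfrac{\lambda\pi}{2}-\tfrac{\pi}{4}\bigr)+O(x^{-3/2}),\qquad x\to\infty,
\]
so that the normalized Bessel function satisfies $j_\lambda(x)=2^\lambda\Gamma(\lambda+1)x^{-\lambda}J_\lambda(x)=O(x^{-\lambda-1/2})$ as $x\to\infty$, while $j_\lambda(x)\to 1$ as $x\to 0$ and $j_\lambda$ is bounded on compacta (being continuous for $\lambda>-1$, which holds since $2\lambda_\kappa+a>0$). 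Thus the only possible source of unboundedness of $B_{\kappa,a}(x,y)$ is the behavior as $t:=\tfrac{2}{a}|xy|^{a/2}\to\infty$.

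First I would analyze the two terms in \eqref{eq4} separately for large $t$. The first term contributes $j_\lambda(t)=O(t^{-\lambda-1/2})$, which is bounded for all admissible parameters (indeed it tends to $0$). For the second term, write $|xy|=(at/2)^{2/a}$, so $|xy|\,j_{\lambda+2/a}(t)$ is comparable to $t^{2/a}\cdot t^{-(\lambda+2/a)-1/2}=t^{-\lambda-1/2}$, i.e. exactly the same order as the first term; hence the second term is $O(t^{-\lambda-1/2})$ as well. Consequently $B_{\kappa,a}(x,y)$ is bounded on all of $\mathbb{R}^2$ whenever $\lambda+\tfrac12\ge 0$, i.e. $\lambda=\lambda_{\kappa,a}=\tfrac{2(\kappa-1/2)}{a}\ge-\tfrac12$, which rearranges precisely to $2\kappa-1\ge-\tfrac{a}{2}$, that is $\kappa\ge\tfrac12-\tfrac{a}{4}$. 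When $a\ge 2$ this is automatic from $\kappa\ge 0$; when $0<a\le 2$ it is the stated condition $\kappa\ge\tfrac12-\tfrac{a}{4}$. This gives sufficiency of \eqref{eq5}.

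For necessity I would show that if $\lambda+\tfrac12<0$ then the kernel is unbounded. Here the point is that the two $O(t^{-\lambda-1/2})$ contributions do not cancel: using the precise oscillatory asymptotics, the first term behaves like a constant times $t^{-\lambda-1/2}\cos(t-\tfrac{\lambda\pi}{2}-\tfrac{\pi}{4})$ and the second, after substituting $xy=\sign(xy)(at/2)^{2/a}$ and absorbing the factor $(ai)^{-2/a}$, like a constant times $t^{-\lambda-1/2}\cos(t-\tfrac{(\lambda+2/a)\pi}{2}-\tfrac{\pi}{4}+c)$ for an explicit phase shift $c$ (coming from the argument of $(ai)^{-2/a}$); the two cosines have different phases, so along a sequence $t_n\to\infty$ the sum of the leading terms stays bounded away from $0$ in modulus, while the prefactor $t_n^{-\lambda-1/2}\to\infty$. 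I would choose the sign of $xy$ and the sequence $t_n$ to make the leading terms add constructively. The main obstacle — and the one step requiring genuine care — is precisely this: verifying that the leading coefficients of the two terms, including the complex factor $(ai)^{-2/a}=a^{-2/a}e^{-i\pi/a}$, are genuinely out of phase so that no cancellation occurs at the order $t^{-\lambda-1/2}$; once that phase comparison is done, the blow-up is immediate. (In the degenerate borderline case $\lambda+\tfrac12=0$ the kernel is bounded, consistent with the non-strict inequalities in \eqref{eq5}, since then the leading terms are $O(1)$.)
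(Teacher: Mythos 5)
Your argument is correct and is exactly the route the paper takes: Proposition~\ref{concon} is stated there as an immediate consequence of the large-argument asymptotics of $J_{\lambda}$ from Watson, which is precisely what you spell out, including the non-cancellation of the two $O(t^{-\lambda-1/2})$ contributions via the phase of $(ai)^{-2/a}=a^{-2/a}e^{-i\pi/a}$ (the same device the paper uses later in the proof of Theorem~\ref{thm4}). The only nit is your parenthetical claim that the first term is ``bounded for all admissible parameters'': for $\lambda<-1/2$ it is not (and at $\lambda=-1/2$ it does not tend to $0$), but this slip does not affect your conclusion.
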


The main goal  of this section is to prove
Theorem \ref{thm2} for $d=1$.


\begin{proof}
Note that $B_{\kappa,a}(x,y)=b_{\kappa,a}(xy)$, where
\begin{equation}\label{eq6}
b_{\kappa,a}(x)=j_{\lambda}\Bigl(\frac{2}{a}\,|x|^{a/2}\Bigr)+
\frac{\Gamma(\lambda+1)}{\Gamma(\lambda+1+2/a)}\,\frac{x}{(ai)^{2/a}}\,
j_{\lambda+\frac{2}{a}}\Bigl(\frac{2}{a}\,|x|^{a/2}\Bigr).
\end{equation}
Therefore, under the conditions of Theorem \ref{thm2}, it suffices  to establish the inequality $|b_{\kappa,a}(x)|\le 1$ for $x\in\mathbb{R}$.

Let $a=\frac{2}{R}$, $R\in\mathbb{N}$.
Equality \eqref{eq6} can be written as
\[
b_{\kappa,a}(x)=j_{\lambda}\Bigl(R|x|^{1/R}\Bigr)+
\frac{\Gamma(\lambda+1)}{\Gamma(\lambda+1+R)}\Bigl(\frac{R}{2}\Bigr)^{R}(-i)^{R}\,x
j_{\lambda+R}\Bigl(R|x|^{1/R}\Bigr).
\]

Let $x\in\mathbb{R}$. In the case  $R=2r+1$, $a=\frac{2}{2r+1}$, $r\in\mathbb{Z}_{+}$, and $v=(2r+1)x^{\frac{1}{2r+1}}$, $x=\bigl(\frac{v}{2r+1}\bigr)^{2r+1}$, there holds
\begin{equation}\label{eq7}
e_{2r+1}(v,\lambda)=b_{\kappa,a}\Bigl(\Bigl(\frac{v}{2r+1}\Bigr)^{2r+1}\Bigr)=j_{\lambda}(v)+
i(-1)^{r+1}\,\frac{v^{2r+1}}{2^{2r+1}(\lambda+1)_{2r+1}}\,
j_{\lambda+2r+1}(v),
\end{equation}
where
\[
(a)_n=\frac{\Gamma(a+n)}{\Gamma(a)}=a(a+1)\cdots(a+n-1)
\]
is the Pochhammer symbol.

In the case $R=2r$, $a=\frac{1}{r}$, $r\in\mathbb{N}$, and $v=2r|x|^{\frac{1}{2r}}\sign{}x$, $x=\big(\frac{v}{2r}\big)^{2r}\sign{}v$,
we have
\begin{equation}\label{eq8}
e_{2r}(v,\lambda)=b_{\kappa,a}\Bigl(\Bigl(\frac{v}{2r}\Bigr)^{2r}\sign{}v\Bigr)=j_{\lambda}(v)+
(-1)^{r}\,\frac{v^{2r}}{2^{2r}(\lambda+1)_{2r}}\,j_{\lambda+2r}(v)\,\sign{}v.
\end{equation}
In order to see that $|e_{2r+1}(v,\lambda)|, |e_{2r}(v,\lambda)|\le 1$, we will need several auxiliary results.
We start with   
  the following identity
\begin{equation}\label{eq9}
\frac{v^2}{4(\lambda+1)(\lambda+2)}\,j_{\lambda+2}(v)=j_{\lambda+1}(v)-j_{\lambda}(v),
\end{equation}
which follows from the  recurrence relation for the Bessel function
$J_{\lambda}(v)$ (see \cite[Chapter~III, 3.2]{Wa66}). Then  by induction we establish

\begin{lemma}\label{lem1}
If $r\in\mathbb{N}$, then
\begin{equation}\label{eq10}
\frac{v^{2r}}{2^{2r}(\lambda+1)_{2r}}\,j_{\lambda+2r}(v)=
(-1)^{r}j_{\lambda}(v)+\sum_{s=1}^{r-1}(-1)^{s+r}
\binom{r}{s}\frac{(\lambda+r)_{s}}{(\lambda+1)_{s}}\,j_{\lambda+s}(v)
+\frac{(\lambda+r+1)_{r-1}}{(\lambda+1)_{r-1}}\,j_{\lambda+r}(v).
\end{equation}
\end{lemma}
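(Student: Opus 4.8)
The plan is to prove Lemma~\ref{lem1} by induction on $r\in\mathbb{N}$, using the single identity \eqref{eq9} as the inductive engine. For the base case $r=1$, formula \eqref{eq10} should collapse to
\[
\frac{v^{2}}{4(\lambda+1)(\lambda+2)}\,j_{\lambda+2}(v)=-j_{\lambda}(v)+j_{\lambda+1}(v),
\]
since the sum $\sum_{s=1}^{0}$ is empty and the Pochhammer prefactor $(\lambda+2)_{0}/(\lambda+1)_{0}=1$; this is exactly \eqref{eq9}, so the base case is immediate.

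For the inductive step, I would assume \eqref{eq10} holds for some $r$ and seek to derive it for $r+1$. The key observation is that \eqref{eq9} with $\lambda$ replaced by $\lambda+2r$ reads
\[
\frac{v^{2}}{4(\lambda+2r+1)(\lambda+2r+2)}\,j_{\lambda+2r+2}(v)=j_{\lambda+2r+1}(v)-j_{\lambda+2r}(v),
\]
and one more application (at index $\lambda+2r-1$, say, or rather using \eqref{eq9} repeatedly to lower indices) lets me express $j_{\lambda+2r+2}(v)$ via $j_{\lambda+2r}(v)$ and $j_{\lambda+2r+1}(v)$, and then further reduce $j_{\lambda+2r+1}(v)$ using \eqref{eq9} at index $\lambda+2r$. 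Alternatively — and this is cleaner — I would write $\frac{v^{2r+2}}{2^{2r+2}(\lambda+1)_{2r+2}}j_{\lambda+2r+2}(v)$ as $\frac{v^{2}}{4(\lambda+2r+1)(\lambda+2r+2)}$ times $\frac{v^{2r}}{2^{2r}(\lambda+1)_{2r}}j_{\lambda+2r+2}(v)$, use \eqref{eq9} at index $\lambda+2r$ to turn the last factor into a combination of $j_{\lambda+2r+1}(v)$ and $j_{\lambda+2r}(v)$, then invoke the induction hypothesis (both for $r$ and for the shifted version, or by also rewriting $j_{\lambda+2r+1}$ via \eqref{eq9}) to unfold everything down to $j_{\lambda},\dots,j_{\lambda+r+1}$. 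Collecting the coefficients of each $j_{\lambda+s}(v)$ and checking they match the claimed binomial-times-Pochhammer pattern is the computational heart of the argument.

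The main obstacle will be the bookkeeping of the coefficients: after substituting the induction hypothesis one gets several sums indexed differently, and reindexing plus applying the Pascal-type identity $\binom{r}{s}+\binom{r}{s-1}=\binom{r+1}{s}$ together with the Pochhammer recursions $(\lambda+r+1)_{s}=(\lambda+r+1)\,(\lambda+r+2)_{s-1}$ and $(\lambda+r)_{s}(\lambda+r+s)=(\lambda+r)_{s+1}$ is needed to recognize the target coefficients $(-1)^{s+r+1}\binom{r+1}{s}\frac{(\lambda+r+1)_{s}}{(\lambda+1)_{s}}$. I would verify the extreme terms separately: the coefficient of $j_{\lambda}(v)$ should become $(-1)^{r+1}$, the coefficient of $j_{\lambda+r+1}(v)$ should become $\frac{(\lambda+r+2)_{r}}{(\lambda+1)_{r}}$, and the new term $j_{\lambda+r}(v)$ (previously the ``last'' term) must now join the sum with its predicted coefficient. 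Since the identity is purely algebraic once \eqref{eq9} is granted, no analytic subtlety arises; the proof is a finite induction, and I would present the base case, state the two-term reduction via \eqref{eq9}, substitute the hypothesis, and consolidate coefficients, leaving the routine Pochhammer manipulations to the reader.
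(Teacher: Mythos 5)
Your proposal is correct and follows essentially the same route as the paper: induction on $r$ with \eqref{eq9} as the base case, extracting a factor $\frac{v^2}{4(\lambda+2r-1)(\lambda+2r)}$ to reduce $j_{\lambda+2r}$ to $j_{(\lambda+1)+2r-2}-j_{\lambda+2r-2}$, applying the induction hypothesis at both $\lambda+1$ and $\lambda$ (which is why the hypothesis must be assumed for all $\lambda$), and consolidating coefficients via the Pascal and Pochhammer recursions. The coefficient bookkeeping you defer is exactly the short verification the paper carries out.
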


\begin{proof} For  $r=1$, the needed formula coincides with  \eqref{eq9}.
Assume that \eqref{eq10} is valid for every $k\le r-1$ and  $\lambda$.
 Denote by
 $a_{s}^r(\lambda)$, $s=0,1,\dots,r$,
 the coefficients by
 $j_{\lambda+s}(v)$ in the decomposition  \eqref{eq10}.
  Taking into account  \eqref{eq9} and the inductive assumption, we derive that
\begin{align*}
\frac{v^{2r}}{2^{2r}(\lambda+1)_{2r}}\,j_{\lambda+2r}(v)&=
\frac{v^{2r-2}}{2^{2r-2}(\lambda+1)_{2r-2}}\,\frac{v^2}{4(\lambda+2r-1)(\lambda+2r)}\,j_{(\lambda+2r-2)+2}(v)
\\
&=\frac{v^{2r-2}}{2^{2r-2}(\lambda+1)_{2r-2}}\,\{j_{(\lambda+1)+2r-2}(v)-j_{\lambda+2r-2}(v)\}
\\
&=\frac{\lambda+2r-1}{\lambda+1}\sum_{s=1}^{r}a_{s-1}^{r-1}(\lambda+1)j_{\lambda+s}(v)-
\sum_{s=0}^{r-1}a_{s}^{r-1}(\lambda)j_{\lambda+s}(v).
\end{align*}

It is enough to show that
\[
a_{0}^{r}(\lambda)=-a_{0}^{r-1}(\lambda),\quad a_{r}^{r}(\lambda)=\frac{\lambda+2r-1}{\lambda+1}\,a_{r-1}^{r-1}(\lambda+1),
\]
\[
a_{s}^{r}(\lambda)=\frac{\lambda+2r-1}{\lambda+1}\,a_{s-1}^{r-1}(\lambda+1)-a_{s}^{r-1}(\lambda),\quad s=1,\dots,r-1.
\]
Indeed, using
the induction step, we have that
\[
a_{0}^{r}(\lambda)=
(-1)^r,\quad a_{r}^{r}(\lambda)=\frac{\lambda+2r-1}{\lambda+1}\,\frac{(\lambda+r+1)_{r-2}}{(\lambda+2)_{r-2}}=
\frac{(\lambda+r+1)_{r-1}}{(\lambda+1)_{r-1}},
\]
and, for $s=1,\dots,r-1$,
\begin{align*}
a_s^r(\lambda)&=(-1)^{s+r}\Bigl\{\frac{\lambda+2r-1}{\lambda+1}
\binom{r-1}{s-1}\frac{(\lambda+r)_{s-1}}{(\lambda+2)_{s-1}}+\binom{r-1}{s}\frac{(\lambda+r-1)_{s}}{(\lambda+1)_{s}}\Bigr\}
\\
&=(-1)^{s+r}\binom{r}{s}\frac{(\lambda+r)_{s}}{(\lambda+1)_{s}}\Bigl\{\frac{\lambda+2r-1}{\lambda+r+s-1}\,\frac{s}{r}+
\frac{\lambda+r-1}{\lambda+r+s-1}\,\frac{r-s}{r}\Bigr\}
\\
&=(-1)^{s+r}\binom{r}{s}\frac{(\lambda+r)_{s}}{(\lambda+1)_{s}},
\end{align*}
which completes the proof.
\end{proof}

\begin{lemma}\label{lem2}
For $r\in\mathbb{Z}_{+}$, we have
\[
\frac{v^{2r+1}}{2^{2r+1}(\lambda+1)_{2r+1}}\,j_{\lambda+2r+1}(v)=(-1)^{r+1}\sum_{s=0}^{r}(-1)^{s}
\binom{r}{s}\frac{(\lambda+r+1)_{s}}{(\lambda+1)_{s}}\,j_{\lambda+s}'(v).
\]
\end{lemma}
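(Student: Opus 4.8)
The plan is to derive this identity from Lemma~\ref{lem1} by differentiating, exploiting the fact that differentiating a normalized Bessel function raises its index. The one ingredient not yet recorded in the paper is the elementary relation
\[
j_\mu'(v)=-\frac{v}{2(\mu+1)}\,j_{\mu+1}(v),
\]
which I would obtain from the recurrence $\frac{d}{dv}\bigl(v^{-\mu}J_\mu(v)\bigr)=-v^{-\mu}J_{\mu+1}(v)$ (see \cite[Chapter~III, 3.2]{Wa66}) together with the definition $j_\mu(v)=2^\mu\Gamma(\mu+1)v^{-\mu}J_\mu(v)$.

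First I would rewrite the left-hand side of the claimed formula. Since $(\lambda+1)_{2r+1}=(\lambda+1)(\lambda+2)_{2r}$,
\[
\frac{v^{2r+1}}{2^{2r+1}(\lambda+1)_{2r+1}}\,j_{\lambda+2r+1}(v)=\frac{v}{2(\lambda+1)}\cdot\frac{v^{2r}}{2^{2r}(\lambda+2)_{2r}}\,j_{(\lambda+1)+2r}(v).
\]
I would then apply Lemma~\ref{lem1} with $\lambda$ replaced by $\lambda+1$; it is convenient to observe first that the conclusion of Lemma~\ref{lem1} can be written as the single sum $\frac{v^{2r}}{2^{2r}(\lambda+1)_{2r}}j_{\lambda+2r}(v)=\sum_{s=0}^{r}(-1)^{s+r}\binom{r}{s}\frac{(\lambda+r)_s}{(\lambda+1)_s}j_{\lambda+s}(v)$, its last term being exactly the $s=r$ summand. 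Merging the factor $(\lambda+1)^{-1}$ with $(\lambda+2)_s$ into $(\lambda+1)_{s+1}$, this yields
\[
\frac{v^{2r+1}}{2^{2r+1}(\lambda+1)_{2r+1}}\,j_{\lambda+2r+1}(v)=(-1)^r\,\frac{v}{2}\sum_{s=0}^{r}(-1)^s\binom{r}{s}\,\frac{(\lambda+r+1)_s}{(\lambda+1)_{s+1}}\,j_{\lambda+s+1}(v).
\]
Next I would transform the right-hand side of the lemma: substituting $j_{\lambda+s}'(v)=-\frac{v}{2(\lambda+s+1)}j_{\lambda+s+1}(v)$ and using $(\lambda+1)_s(\lambda+s+1)=(\lambda+1)_{s+1}$ turns it into exactly the same expression, which completes the proof. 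The case $r=0$ is immediate, being just the displayed derivative relation.

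I do not expect a genuine obstacle here: once the derivative relation is available the argument is a short, purely algebraic manipulation. The only point demanding a little care is the passage to the single-sum form of Lemma~\ref{lem1}, i.e.\ checking that its boundary term $\frac{(\lambda+r+1)_{r-1}}{(\lambda+1)_{r-1}}\,j_{\lambda+r}(v)$ coincides with the $s=r$ summand --- equivalently, the Pochhammer identity $\frac{(\lambda+r)_r}{(\lambda+1)_r}=\frac{(\lambda+r+1)_{r-1}}{(\lambda+1)_{r-1}}$ --- and keeping the bookkeeping with Pochhammer symbols consistent after the index shift.
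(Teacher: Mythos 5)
Your proposal is correct and matches the paper's proof essentially verbatim: both write the left-hand side as $\frac{v}{2(\lambda+1)}$ times the Lemma~\ref{lem1} expansion at $\lambda+1$ and then use $j_\mu'(v)=-\frac{v}{2(\mu+1)}\,j_{\mu+1}(v)$ (which the paper also invokes) together with the Pochhammer identity $(\lambda+1)_s(\lambda+s+1)=(\lambda+1)_{s+1}$. The only cosmetic difference is that you verify the identity by reducing both sides to a common single-sum expression, whereas the paper rewrites the expansion term by term into derivatives.
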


\begin{proof}

 Using Lemma~\ref{lem1} and the equality
\[
j_{\lambda}'(v)=-\frac{v}{2(\lambda+1)}\,j_{\lambda+1}(v),
\]
we derive that
\begin{multline*}
\frac{v^{2r+1}}{2^{2r+1}(\lambda+1)_{2r+1}}\,j_{\lambda+2r+1}(v)=\frac{v}{2(\lambda+1)}\,
\frac{v^{2r}}{2^{2r}(\lambda+2)_{2r}}\,j_{(\lambda+1)+2r}(v)
\\
=
\frac{v}{2(\lambda+1)}\Bigl\{(-1)^{r}j_{\lambda+1}(v)+\sum_{s=1}^{r-1}(-1)^{s+r}
\binom{r}{s}\frac{(\lambda+r+1)_{s}}{(\lambda+2)_{s}}\,j_{\lambda+1+s}(v)+\frac{(\lambda+r+2)_{r-1}}{(\lambda+2)_{r-1}}\,j_{\lambda+1+r}(v)\Bigr\}
\\
=
(-1)^{r+1}j_{\lambda}'(v)+\sum_{s=1}^{r-1}(-1)^{s+r+1}
\binom{r}{s}\frac{(\lambda+r+1)_{s}}{(\lambda+1)_s}\,j_{\lambda+s}'(v)-\frac{(\lambda+r+1)_{r}}{(\lambda+1)_r}\,j_{\lambda+r}'(v).
\end{multline*}
\end{proof}

Taking into account  \eqref{eq7}, \eqref{eq8}, Lemmas~\ref{lem1}, \ref{lem2}  and
\[
j_{\lambda}(v)=c_{\lambda}\int_{-1}^{1}(1-t^2)^{\lambda-1/2}e^{-ivt}\,dt,\quad
j_{\lambda}'(v)=-ic_{\lambda}\int_{-1}^{1}(1-t^2)^{\lambda-1/2}te^{-ivt}\,dt
\]
with
$ c_{\lambda}=\frac{\Gamma(\lambda+1)}{\sqrt{\pi}\Gamma(\lambda+1/2)}$, $\lambda>-1/2$ (see \cite[Chapt.~III, 3.3]{Wa66},
we arrive at the following integral representations of
the functions $e_{2r+1}(v,\lambda)$, and $e_{2r}(v,\lambda)$.

\begin{lemma}\label{lem3}
If $r\in\mathbb{Z}_{+}$, $\lambda>-1/2$, then
\begin{equation}\label{eq11}
e_{2r+1}(v,\lambda)=c_{\lambda}\int_{-1}^{1}(1-t^2)^{\lambda-1/2}q_{2r+1}(t,\lambda)e^{-ivt}\,dt,
\end{equation}
where $q_{2r+1}(t,\lambda)$ is a polynomial of degree  $2r+1$ with respect to $t$ given by
\[
q_{2r+1}(t,\lambda)=1+t\sum_{s=0}^{r}(-1)^{s}
\binom{r}{s}\frac{(\lambda+r+1)_{s}}{(\lambda+1/2)_{s}}\,(1-t^2)^s.
\]
\end{lemma}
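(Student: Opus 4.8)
The plan is to combine Lemma~\ref{lem2} with the Poisson-type integral representations of $j_\lambda$ and $j_\lambda'$ quoted just above, feeding everything into formula \eqref{eq7}. First I would take \eqref{eq7} and replace the second summand, namely $i(-1)^{r+1}\frac{v^{2r+1}}{2^{2r+1}(\lambda+1)_{2r+1}}j_{\lambda+2r+1}(v)$, using Lemma~\ref{lem2}; this rewrites $e_{2r+1}(v,\lambda)$ as $j_\lambda(v)$ plus $i\sum_{s=0}^r(-1)^s\binom{r}{s}\frac{(\lambda+r+1)_s}{(\lambda+1)_s}j_{\lambda+s}'(v)$ (the two factors of $(-1)^{r+1}$ and the $i$ combine to a single $i$, with the sign $(-1)^{r+1}\cdot(-1)^{r+1}=1$).

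Next I would substitute the integral formulas $j_{\lambda}(v)=c_\lambda\int_{-1}^1(1-t^2)^{\lambda-1/2}e^{-ivt}\,dt$ and $j_{\lambda+s}'(v)=-ic_{\lambda+s}\int_{-1}^1(1-t^2)^{\lambda+s-1/2}t\,e^{-ivt}\,dt$ into this expression. The term $j_\lambda(v)$ contributes the constant $1$ inside the would-be polynomial $q_{2r+1}$. For the derivative terms, the factor $-i$ from each $j_{\lambda+s}'$ cancels the leading $i$, producing a real contribution $t\sum_{s=0}^r(-1)^s\binom{r}{s}\frac{(\lambda+r+1)_s}{(\lambda+1)_s}c_{\lambda+s}(1-t^2)^{\lambda+s-1/2}$. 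The key bookkeeping step is to factor out the common weight $c_\lambda(1-t^2)^{\lambda-1/2}$: since $c_{\lambda+s}(1-t^2)^{\lambda+s-1/2}=c_\lambda(1-t^2)^{\lambda-1/2}\cdot\frac{c_{\lambda+s}}{c_\lambda}(1-t^2)^s$, and from $c_\lambda=\frac{\Gamma(\lambda+1)}{\sqrt\pi\,\Gamma(\lambda+1/2)}$ one computes $\frac{c_{\lambda+s}}{c_\lambda}=\frac{\Gamma(\lambda+1+s)\Gamma(\lambda+1/2)}{\Gamma(\lambda+1)\Gamma(\lambda+1/2+s)}=\frac{(\lambda+1)_s}{(\lambda+1/2)_s}$, the factor $\frac{(\lambda+1)_s}{(\lambda+1/2)_s}$ exactly cancels the $\frac{1}{(\lambda+1)_s}$ in the coefficient, leaving $\frac{(\lambda+r+1)_s}{(\lambda+1/2)_s}(1-t^2)^s$. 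This yields precisely $q_{2r+1}(t,\lambda)=1+t\sum_{s=0}^r(-1)^s\binom{r}{s}\frac{(\lambda+r+1)_s}{(\lambda+1/2)_s}(1-t^2)^s$.

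Finally I would record the degree: each summand $t(1-t^2)^s$ has degree $2s+1\le 2r+1$, so $q_{2r+1}$ is a polynomial in $t$ of degree at most $2r+1$, with the top term coming from $s=r$; since $\binom{r}{r}=1$ and the Pochhammer ratio is nonzero, the degree is exactly $2r+1$, and \eqref{eq11} follows. I expect the only real obstacle to be the Pochhammer identity $c_{\lambda+s}/c_\lambda=(\lambda+1)_s/(\lambda+1/2)_s$ and making sure the sign and the imaginary units balance correctly when passing from the $j'$ representation to a real polynomial; these are routine but must be done carefully, and the hypothesis $\lambda>-1/2$ is exactly what makes all the integral representations valid.
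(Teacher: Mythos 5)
Your argument is correct and is exactly the paper's route: the authors also obtain \eqref{eq11} by inserting Lemma~\ref{lem2} into \eqref{eq7} and then substituting the Poisson integral representations of $j_{\lambda}$ and $j_{\lambda+s}'$, with the ratio $c_{\lambda+s}/c_{\lambda}=(\lambda+1)_s/(\lambda+1/2)_s$ converting the coefficients as you describe. All the sign and $i$-bookkeeping in your write-up checks out, so nothing is missing.
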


\begin{lemma}\label{lem4}
If $r\in\mathbb{N}$, $\lambda>-1/2$, then
\begin{equation}\label{eq11----}
e_{2r}(v,\lambda)=c_{\lambda}\int_{-1}^{1}(1-t^2)^{\lambda-1/2}q_{2r}(t,\lambda)e^{-ivt}\,dt,
\end{equation}
where $q_{2r}(t,\lambda)$ is a polynomial of degree  $2r$ with respect to $t$ given by
\[
q_{2r}(t,\lambda)=q_{2r}(t,v,\lambda)
=1+\sign{}v\,\Bigl\{\sum_{s=0}^{r}(-1)^{s}
\binom{r}{s}\frac{(\lambda+r)_{s}}{(\lambda+1/2)_{s}}\,(1-t^2)^s\Bigr\}.
\]
\end{lemma}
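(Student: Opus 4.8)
The plan is to derive \eqref{eq11----} by exactly mirroring the argument already used for Lemma~\ref{lem3}, starting from the explicit formula \eqref{eq8} for $e_{2r}(v,\lambda)$. Recall that
\[
e_{2r}(v,\lambda)=j_{\lambda}(v)+(-1)^{r}\,\frac{v^{2r}}{2^{2r}(\lambda+1)_{2r}}\,j_{\lambda+2r}(v)\,\sign{}v,
\]
so the first step is to substitute the expansion of $\frac{v^{2r}}{2^{2r}(\lambda+1)_{2r}}\,j_{\lambda+2r}(v)$ supplied by Lemma~\ref{lem1}. After multiplying by $(-1)^r$, the leading term $(-1)^r\cdot(-1)^r j_\lambda(v)=j_\lambda(v)$ appears, the middle sum becomes $\sum_{s=1}^{r-1}(-1)^{s}\binom{r}{s}\frac{(\lambda+r)_s}{(\lambda+1)_s}j_{\lambda+s}(v)$, and the tail term contributes $(-1)^r\frac{(\lambda+r+1)_{r-1}}{(\lambda+1)_{r-1}}j_{\lambda+r}(v)$. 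One checks these last two pieces are precisely the $s=0$ (after combining with the standalone $j_\lambda$) through $s=r$ terms of the single sum $\sum_{s=0}^{r}(-1)^{s}\binom{r}{s}\frac{(\lambda+r)_s}{(\lambda+1)_s}j_{\lambda+s}(v)$; indeed for $s=0$ the summand is $j_\lambda(v)$, which together with the isolated $j_\lambda(v)=j_\lambda(v)$ from $e_{2r}$ is bookkept correctly once we pull the whole bracket inside $\sign{}v$ as written in the statement.

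The second step is to replace each normalized Bessel function by its Poisson-type integral representation $j_{\lambda+s}(v)=c_{\lambda+s}\int_{-1}^{1}(1-t^2)^{\lambda+s-1/2}e^{-ivt}\,dt$. To express every term against the common weight $(1-t^2)^{\lambda-1/2}$ one writes $(1-t^2)^{\lambda+s-1/2}=(1-t^2)^{\lambda-1/2}(1-t^2)^s$ and absorbs the ratio $c_{\lambda+s}/c_\lambda$. A short computation with the Gamma function gives
\[
\frac{c_{\lambda+s}}{c_{\lambda}}=\frac{\Gamma(\lambda+s+1)\Gamma(\lambda+1/2)}{\Gamma(\lambda+1)\Gamma(\lambda+s+1/2)}=\frac{(\lambda+1)_s}{(\lambda+1/2)_s},
\]
so each $j_{\lambda+s}(v)$ becomes $c_\lambda\int_{-1}^1(1-t^2)^{\lambda-1/2}\frac{(\lambda+1)_s}{(\lambda+1/2)_s}(1-t^2)^s e^{-ivt}\,dt$. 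Feeding this into the sum from the first step, the factor $(\lambda+1)_s$ in the numerator cancels the $(\lambda+1)_s$ in the denominator of $\frac{(\lambda+r)_s}{(\lambda+1)_s}$, leaving exactly $\frac{(\lambda+r)_s}{(\lambda+1/2)_s}(1-t^2)^s$ inside the integral. Collecting the constant term $1$ coming from $j_\lambda(v)$ and the $\sign{}v$ times the sum $\sum_{s=0}^r(-1)^s\binom{r}{s}\frac{(\lambda+r)_s}{(\lambda+1/2)_s}(1-t^2)^s$ reproduces $q_{2r}(t,\lambda)$ precisely as stated, and the degree is $2r$ since the top term $s=r$ carries $(1-t^2)^r$.

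The one genuine subtlety — the main obstacle, such as it is — is the careful accounting of the $\sign{}v$: unlike in the odd case, the entire Bessel-combination is multiplied by $\sign{}v$ in \eqref{eq8}, so $q_{2r}$ legitimately depends on $v$ only through this sign (hence the notation $q_{2r}(t,v,\lambda)$ in the statement), and one must be sure the solitary $j_\lambda(v)$ term from $e_{2r}$ is kept outside the bracket while the $j_\lambda$ term generated by the $s=0$ summand of Lemma~\ref{lem1}'s expansion sits inside. Apart from this index bookkeeping and the routine Pochhammer cancellations, the derivation is entirely parallel to Lemma~\ref{lem3}: use \eqref{eq8}, apply Lemma~\ref{lem1}, substitute the integral representations, and simplify. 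The validity of the Poisson representation, and hence of the interchange of finite sum and integral, is justified by $\lambda>-1/2$ exactly as in the odd case.
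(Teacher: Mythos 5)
Your proposal is correct and follows exactly the route the paper intends: substitute Lemma~\ref{lem1} into \eqref{eq8}, note that the boundary coefficient satisfies $\frac{(\lambda+r+1)_{r-1}}{(\lambda+1)_{r-1}}=\frac{(\lambda+r)_r}{(\lambda+1)_r}$ so the expansion collapses to a single sum over $s=0,\dots,r$, then insert the Poisson representation of $j_{\lambda+s}$ and use $c_{\lambda+s}/c_{\lambda}=(\lambda+1)_s/(\lambda+1/2)_s$ to cancel the Pochhammer factors. The paper gives no more detail than this (it simply cites \eqref{eq8}, Lemma~\ref{lem1}, and the Poisson formula), and your handling of the $\sign v$ factor and the two $j_{\lambda}$ terms is the correct bookkeeping.
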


 For our further analysis, it is important to know for which $\lambda$ the polynomials  $q_{2r}(t,\lambda)$, $q_{2r+1}(t,\lambda)$ are nonnegative on $[-1,1]$. If  $r=0$,
  $q_{1}(t,\lambda)=1-t\ge 0$ 
 on $[-1,1]$, that is,
 $q_{1}$ does not depend on $\lambda$. This special case corresponds to parameters $a=2,\, \kappa\ge 0$
and hence \eqref{eq11} is a well-known integral representation of the kernel of the one-dimensional Dunkl transform.
  In other cases, positivity conditions
 for $q_{2r}(t,\lambda)$ and $q_{2r+1}(t,\lambda)$ depend on  $\lambda$.
 We will see that there holds
 \begin{equation}\label{eq13}
q_{2r}(t,\lambda)=1+\sign v\,P_{2r}^{(\lambda-1/2)}(t),\quad q_{2r+1}(t,\lambda)=1+P_{2r+1}^{(\lambda-1/2)}(t),
\end{equation}
where
  $\{P_n^{(\alpha)}(t)\}_{n=0}^{\infty}$ is the system of Gegenbauer (ultraspherical) polynomials, that is, the family of polynomials orthogonal on $[-1,1]$  with respect to the weight function $(1-t^2)^{\alpha}$, $\alpha>-1$, normalized by $P_n^{(\alpha)}(1)=1$. Note that
\begin{equation}\label{eq12}
\frac{1}{\lambda}\,C_n^{\lambda}(t)=\frac{2\Gamma(2\lambda+n)}{n!\,\Gamma(2\lambda+1)}\,P_n^{(\lambda-1/2)}(t),\quad
\lambda>-1/2,
\end{equation}
where
$C_n^{\lambda}(t)$ are the Gegenbauer polynomials given in \cite[Chapt. X,10.9]{BE53a}.

\begin{lemma}\label{lem5}
For  $\lambda>-1/2$ and $r\ge 0$, there hold
\begin{equation*}
P_{2r}^{(\lambda-1/2)}(t)=\sum_{s=0}^{r}(-1)^{s}
\binom{r}{s}\frac{(\lambda+r)_{s}}{(\lambda+1/2)_{s}}\,(1-t^2)^s
\end{equation*}
and
\begin{equation*}
P_{2r+1}^{(\lambda-1/2)}(t)=t\sum_{s=0}^{r}(-1)^{s}
\binom{r}{s}\frac{(\lambda+r+1)_{s}}{(\lambda+1/2)_{s}}\,(1-t^2)^s.
\end{equation*}
Therefore, \eqref{eq13} holds.
\end{lemma}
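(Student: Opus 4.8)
The plan is to recognize the two polynomials on the right-hand side as the Gegenbauer polynomials by checking the three properties that characterize the latter: the correct degree and parity, the value $1$ at $t=1$, and orthogonality with respect to $(1-t^2)^{\lambda-1/2}$ to all polynomials of lower degree. Denote by $Q_{2r}(t,\lambda)$ and $Q_{2r+1}(t,\lambda)$ the two sums in question. Since $(1-t^2)^s=0$ at $t=1$ for $s\ge1$, both sums collapse to their $s=0$ terms there, so $Q_{2r}(1,\lambda)=Q_{2r+1}(1,\lambda)=1$; moreover $Q_{2r}$ is even of degree $2r$ and $Q_{2r+1}$ is odd of degree $2r+1$ (the top coefficient, $(\lambda+r)_r/(\lambda+\tfrac12)_r$ resp. $(\lambda+r+1)_r/(\lambda+\tfrac12)_r$, is nonzero because $\lambda>-1/2$). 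Since for $\lambda>-1/2$ the polynomial $P_n^{(\lambda-1/2)}$ is the unique polynomial of degree $n$ orthogonal, with respect to $(1-t^2)^{\lambda-1/2}$, to all polynomials of degree $<n$ and normalized by value $1$ at $t=1$, it remains only to verify this orthogonality for $Q_{2r}$ and $Q_{2r+1}$. By parity the even (resp.\ odd) part of a test polynomial integrates against $Q_{2r+1}$ (resp.\ $Q_{2r}$) to zero automatically, so it suffices to test $Q_{2r}(\cdot,\lambda)$ against $(1-t^2)^{j}$ and $Q_{2r+1}(\cdot,\lambda)$ against $t(1-t^2)^{j}$ for $j=0,1,\dots,r-1$.

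The computation then reduces to Beta integrals. Using $\int_{-1}^{1}(1-t^2)^{m+\lambda-1/2}\,dt=\sqrt{\pi}\,\Gamma(m+\lambda+\tfrac12)/\Gamma(m+\lambda+1)$ and expressing ratios of $\Gamma$'s through Pochhammer symbols, I obtain
\[
\int_{-1}^{1}Q_{2r}(t,\lambda)(1-t^2)^{j+\lambda-1/2}\,dt=\frac{\sqrt{\pi}\,\Gamma(j+\lambda+\frac12)}{\Gamma(j+\lambda+1)}\sum_{s=0}^{r}\frac{(-r)_{s}\,(\lambda+r)_{s}\,(\lambda+j+\frac12)_{s}}{(\lambda+\frac12)_{s}\,(\lambda+j+1)_{s}\,s!},
\]
so the relevant sum is the terminating series ${}_3F_2(-r,\lambda+r,\lambda+j+\tfrac12;\lambda+\tfrac12,\lambda+j+1;1)$, and one checks immediately that it is Saalsch\"utzian (the sum of the denominator parameters exceeds that of the numerator parameters by $1$). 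For the odd case the same steps, now with $\int_{-1}^{1}t^{2}(1-t^2)^{m+\lambda-1/2}\,dt=\tfrac{\sqrt{\pi}}{2}\,\Gamma(m+\lambda+\tfrac12)/\Gamma(m+\lambda+2)$, lead to the terminating Saalsch\"utzian series ${}_3F_2(-r,\lambda+r+1,\lambda+j+\tfrac12;\lambda+\tfrac12,\lambda+j+2;1)$.

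Finally I would apply the Pfaff--Saalsch\"utz theorem, which in both cases evaluates the series as a ratio of Pochhammer symbols containing the factor $(-j)_{r}=(-j)(-j+1)\cdots(-j+r-1)$. For $0\le j\le r-1$ one of these $r$ consecutive factors equals $0$, so the product vanishes, and hence all the integrals above are zero for $j=0,\dots,r-1$. Therefore $Q_{2r}(\cdot,\lambda)=P_{2r}^{(\lambda-1/2)}$ and $Q_{2r+1}(\cdot,\lambda)=P_{2r+1}^{(\lambda-1/2)}$, which is exactly the claim of the lemma and yields \eqref{eq13}. The only real obstacle here is bookkeeping: correctly identifying which ${}_3F_2$ arises and confirming the balancing (Saalsch\"utz) condition; everything else is routine. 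Alternatively, one may note that the two sums equal ${}_2F_1(-r,\lambda+r;\lambda+\tfrac12;1-t^2)$ and $t\,{}_2F_1(-r,\lambda+r+1;\lambda+\tfrac12;1-t^2)$ and invoke the classical quadratic transformations expressing the Gegenbauer polynomials in this form, but the orthogonality argument above is self-contained.
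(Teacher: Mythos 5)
Your proof is correct, but it takes a genuinely different route from the paper. The paper's proof is essentially a two-line lookup: writing $(-1)^s\binom{r}{s}=\frac{(-r)_s}{s!}$, it identifies the two sums as ${}_2F_1(-r,\lambda+r;\lambda+\tfrac12;1-t^2)$ and $t\,{}_2F_1(-r,\lambda+r+1;\lambda+\tfrac12;1-t^2)$ and then quotes the classical representations of $C_{2r}^{\lambda}$ and $C_{2r+1}^{\lambda}$ from Erd\'elyi (formulas 10.9(21)--(22)) together with the normalization \eqref{eq12} --- exactly the ``alternative'' you mention in your last sentence. Your main argument instead verifies the three characterizing properties (degree and parity, value $1$ at $t=1$, orthogonality to lower degrees against $(1-t^2)^{\lambda-1/2}$) directly, reducing the orthogonality to terminating Saalsch\"utzian ${}_3F_2$'s evaluated by Pfaff--Saalsch\"utz, where the factor $(-j)_r$ kills the integral for $j=0,\dots,r-1$. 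I checked the details: the Beta-integral reductions, the balancing condition, the non-vanishing of the leading coefficients and of the denominator Pochhammer symbols for $\lambda>-1/2$ all hold, and the parity reduction to the test functions $(1-t^2)^j$ and $t(1-t^2)^j$ is legitimate. What your approach buys is self-containedness (only Pfaff--Saalsch\"utz is imported, rather than a specific tabulated Gegenbauer identity); what the paper's approach buys is brevity. Both are sound.
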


 \begin{proof}
Since
$
\frac{(-r)_s}{s!}=(-1)^{s}\binom{r}{s},
 $ 
   taking into account  \eqref{eq12}, \cite[Chapt.~X, 10.9(21)]{BE53a}, we~get
\begin{multline*}
\sum_{s=0}^{r}(-1)^{s}\binom{r}{s}\frac{(\lambda+r)_{s}}{(\lambda+1/2)_{s}}\,(1-t^2)^s
=\sum_{s=0}^{r}\frac{(-r)_s(\lambda+r)_{s}}{s!\,(\lambda+1/2)_{s}}\,(1-t^2)^s\\
={}_{2}F_1(-r,\lambda+r;\lambda+1/2;1-t^2)=\frac{(2r)!\,\Gamma(2\lambda+1)}
{2\Gamma(2\lambda+2r)}\,\frac{1}{\lambda}\,C_{2r}^{\lambda}(t)=P_{2r}^{(\lambda-1/2)}(t).
\end{multline*}
Similarly, using \eqref{eq12}, \cite[Chapt. X, 10.9(22)]{BE53a}, we arrive at
\begin{multline*}
t\sum_{s=0}^{r}(-1)^{s}\binom{r}{s}\frac{(\lambda+r+1)_{s}}{(\lambda+1/2)_{s}}\,(1-t^2)^s
=t\sum_{s=0}^{r}\frac{(-r)_s(\lambda+r+1)_{s}}{s!\,(\lambda+1/2)_{s}}\,(1-t^2)^s\\
=t\,{}_{2}F_1(-r,\lambda+r+1;\lambda+1/2;1-t^2)=\frac{(2r+1)!\,\Gamma(2\lambda+1)}
{2\Gamma(2\lambda+2r+1)}\,\frac{1}{\lambda}\,C_{2r+1}^{\lambda}(t)=P_{2r+1}^{(\lambda-1/2)}(t).
\end{multline*}
 \end{proof}

We complete the proof of Theorem \ref{thm2} noting that
 $|P_{n}^{(\lambda-1/2)}(t)|\le 1$ for  $t\in [-1,1]$ under the condition  $\lambda\ge 0$ or, equivalently, $\kappa\ge 1/2$
(see \cite[Chapt.~VII, 7.32.2]{Se74}. Then in light of  Remark \ref{rem2} and Lemma \ref{lem5}),
 the polynomials  $q_{2r}(t,\lambda)$ and $q_{2r+1}(t,\lambda)$ are nonnegative on $[-1,1]$ and therefore Lemmas \ref{lem3} and \ref{lem4}, together with \eqref{eq7} and \eqref{eq8},
yield the statement of Theorem \ref{thm2}. To illustrate this, for $a=\frac{2}{2r+1}$, $v=(2r+1)x^{\frac{1}{2r+1}}$, we have
\[
|b_{\kappa,a}(x)|=|e_{2r+1}(v)|\le c_{\lambda}\int_{-1}^{1}(1-t^2)^{\lambda-1/2}q_{2r+1}(t,\lambda)\,dt
=1.
\]
\end{proof}

The following integral representations of $b_{\kappa,a}(x)$ follow from the lemmas above.
\begin{corollary}\label{cor1} 
If $\lambda=(2\kappa-1)/a$ and
$\kappa>\frac{1}{2}-\frac{a}{4}$,  then for $x\in\mathbb{R}$
\[
b_{\kappa,a}(x)=c_{\lambda}\int_{-1}^{1}(1-t^2)^{\lambda-1/2}(1+P_{2r+1}^{(\lambda-1/2)}(t))e^{-i(2r+1)x^{\frac{1}{2r+1}}{}t}\,dt,\quad
a=\frac{2}{2r+1},\quad r\in\mathbb{Z}_+,
\]
and
\[
b_{\kappa,a}(x)=c_{\lambda}\int_{-1}^{1}(1-t^2)^{\lambda-1/2}(1+\sign x\,P_{2r}^{(\lambda-1/2)}(t))e^{-i(2r|x|^{\frac{1}{2r}}\sign x)t}\,dt,\quad
a=\frac{1}{r},\quad r\in\mathbb{N}.
\]
\end{corollary}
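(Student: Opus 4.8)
The plan is to derive both identities by a direct substitution into the integral representations of Lemmas~\ref{lem3} and \ref{lem4}, after rewriting the polynomials $q_{2r+1}$ and $q_{2r}$ via Lemma~\ref{lem5}. The first thing I would check is that the hypothesis $\kappa>\frac12-\frac{a}{4}$ is precisely equivalent to $\lambda>-\frac12$: since $\lambda=(2\kappa-1)/a$ with $a>0$, the inequality $\kappa>\frac12-\frac{a}{4}$ rearranges to $2\kappa-1>-\frac a2$, i.e. $\lambda>-\frac12$. This is exactly the range in which $(1-t^2)^{\lambda-1/2}$ is integrable on $[-1,1]$, in which $c_\lambda=\Gamma(\lambda+1)/(\sqrt\pi\,\Gamma(\lambda+1/2))$ is finite, and in which Lemmas~\ref{lem3}, \ref{lem4}, \ref{lem5} apply.

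For the odd case $a=\frac{2}{2r+1}$, $r\in\mathbb{Z}_+$, I would put $v=(2r+1)x^{1/(2r+1)}$, noting that the odd root is defined for every real $x$ with inverse $x=(v/(2r+1))^{2r+1}$, so that this is a bijection of $\mathbb{R}$ onto $\mathbb{R}$. Then identity \eqref{eq7} gives $b_{\kappa,a}(x)=e_{2r+1}(v,\lambda)$. Substituting this value of $v$ into \eqref{eq11} and replacing $q_{2r+1}(t,\lambda)$ by $1+P_{2r+1}^{(\lambda-1/2)}(t)$ according to \eqref{eq13} (which is the content of Lemma~\ref{lem5}) yields the first displayed formula.

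For the even case $a=\frac1r$, $r\in\mathbb{N}$, I would put $v=2r|x|^{1/(2r)}\sign x$, so that $x=(v/(2r))^{2r}\sign v$ and, crucially, $\sign v=\sign x$ since $|x|^{1/(2r)}\ge 0$. Then \eqref{eq8} gives $b_{\kappa,a}(x)=e_{2r}(v,\lambda)$; substituting into \eqref{eq11----} and using $q_{2r}(t,\lambda)=1+\sign v\,P_{2r}^{(\lambda-1/2)}(t)=1+\sign x\,P_{2r}^{(\lambda-1/2)}(t)$ from \eqref{eq13} gives the second formula.

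The only point that needs any attention — rather than a genuine obstacle — is the sign and root bookkeeping in the even case: one must verify that $\sign v=\sign x$ so that the $\sign$ inside the integrand agrees with the one in the exponential $e^{-i(2r|x|^{1/(2r)}\sign x)t}$, and that the change of variable $x\mapsto v$ is a bijection of $\mathbb{R}$. Once these are in place, the rest is a purely mechanical substitution into results already established, so no further difficulty is expected.
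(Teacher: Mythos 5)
Your proposal is correct and follows exactly the paper's route: the paper states that Corollary~\ref{cor1} "follows from the lemmas above," i.e.\ from the substitutions \eqref{eq7}, \eqref{eq8} into the integral representations of Lemmas~\ref{lem3}, \ref{lem4} together with the identification \eqref{eq13} from Lemma~\ref{lem5}, which is precisely what you do. Your explicit checks that $\kappa>\frac12-\frac a4$ is equivalent to $\lambda>-\frac12$ and that $\sign v=\sign x$ in the even case are the right bookkeeping details and introduce no deviation from the intended argument.
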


\begin{remark}
The  representations of $b_{\kappa,a}(x)$ given in Corollary \ref{cor1} can be also obtained from 
\[
n!\int_{0}^{\pi}e^{iz\cos\theta}C_{n}^{\lambda}(\cos\theta)(\sin\theta)^{2\lambda}\,d\theta=
2^{\lambda}\sqrt{\pi}\,\Gamma(\lambda+1/2)(2\lambda)_n\,i^nz^{-\lambda}J_{\lambda+n}(z)
\]
(see \cite[Chapt. X, 10.9(38)]{BE53a}) without applying Lemmas~\ref{lem1} and \ref{lem2}. These lemmas are of independent interest.
\end{remark}

\begin{remark}
The positiveness of the polynomials $q_{2r}(t,\lambda)$, $q_{2r+1}(t,\lambda)$ is sufficient, but not necessary
for the estimate $|B_{\kappa,a}(x,y)|\le 1$ to hold.
In \cite{GIT16}, it was mentioned that for
$a=1$ this estimate holds also for  $1/4<\kappa_0\le \kappa<1/2$.
\end{remark}

Note that the case $\kappa<1/2$ corresponds to $\lambda<0$, which we study in detail in the next section.

\vspace{0.6mm}

\section{The case $d=1$ and $\lambda_{\kappa,a}<0$}\label{sec3}

Let us investigate whether  the polynomials  $q_{2r}(t,\lambda)$ and
$q_{2r+1}(t,\lambda)$ are nonnegative for  $\lambda=\lambda_{\kappa,a}=(2\kappa-1)/a<0$. In order to do this, we decompose
them by polynomials $q_{2r}(t,0)$ and $q_{2r+1}(t,0)$.

First, we decompose the Gegenbauer polynomials in terms of the Chebyshev polynomials using the well-known result  (see \cite[10.9(17)]{BE53a})
\begin{equation}\label{eq14}
C_{n}^{\lambda}(\cos\theta)=\sum_{s=0}^{n}\frac{(\lambda)_s(\lambda)_{n-s}}{s!\,(n-s)!}\cos{}(n-2m)\theta.
\end{equation}
Let us start with the case $R=2r+1$.
\begin{lemma}
For any  $r\in\mathbb{Z}_+$ and $\lambda>-1/2$, there holds
\begin{equation}\label{eq15}
P_{2r+1}^{(\lambda-1/2)}(t)=\sum_{s=0}^{r}b_s^r(\lambda)P_{2r+1-2s}^{(-1/2)}(t),
\end{equation}
where
\[
b_s^r(\lambda)=\frac{(2r+2-s)_{s}(\lambda)_{s}(\lambda+r+1)_{r-s}}{4^r(\lambda+1/2)_{r}s!},\quad s=0,1,\dots,r.
\]
\end{lemma}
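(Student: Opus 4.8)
The plan is to expand $P_{2r+1}^{(\lambda-1/2)}$ into a Chebyshev (cosine) series by means of \eqref{eq14} and then read off the coefficients. Put $t=\cos\theta$, so that the Chebyshev polynomials are $P_n^{(-1/2)}(\cos\theta)=\cos n\theta$; this is the limiting case $\lambda\to 0$ of \eqref{eq12}, and it is also how \eqref{eq12} should be read when $\lambda=0$. Both sides of \eqref{eq15} are analytic in $\lambda$, so it suffices to argue for $\lambda\ne 0$ and then pass to the limit $\lambda\to 0$.

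First I would apply \eqref{eq14}, $C_n^{\lambda}(\cos\theta)=\sum_{m=0}^{n}\frac{(\lambda)_m(\lambda)_{n-m}}{m!\,(n-m)!}\cos((n-2m)\theta)$, with $n=2r+1$. Since $2r+1$ is odd there is no middle term, and the index set $\{0,1,\dots,2r+1\}$ decomposes into the pairs $\{m,\,2r+1-m\}$, $m=0,\dots,r$; within each pair the two summands carry the same coefficient and both produce $\cos((2r+1-2m)\theta)$. Hence
\[
C_{2r+1}^{\lambda}(\cos\theta)=2\sum_{m=0}^{r}\frac{(\lambda)_m(\lambda)_{2r+1-m}}{m!\,(2r+1-m)!}\,\cos((2r+1-2m)\theta).
\]
Dividing by $\lambda$, multiplying through by $\frac{(2r+1)!\,\Gamma(2\lambda+1)}{2\,\Gamma(2\lambda+2r+1)}$, using \eqref{eq12} on the left, and replacing $\cos((2r+1-2m)\theta)$ by $P_{2r+1-2m}^{(-1/2)}(t)$ on the right, we arrive at \eqref{eq15} with $s=m$ and
\[
b_s^r(\lambda)=\frac{(2r+1)!\,\Gamma(2\lambda+1)}{\lambda\,\Gamma(2\lambda+2r+1)}\cdot\frac{(\lambda)_s(\lambda)_{2r+1-s}}{s!\,(2r+1-s)!}.
\]

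It then remains to reduce this expression to the stated closed form, which is a routine Pochhammer computation: I would apply, in order, the identities $\frac{1}{\lambda}(\lambda)_{2r+1-s}=(\lambda+1)_{2r-s}$, $\frac{\Gamma(2\lambda+1)}{\Gamma(2\lambda+2r+1)}=\frac{1}{(2\lambda+1)_{2r}}$, the duplication formula $(2\lambda+1)_{2r}=4^{r}(\lambda+\tfrac12)_r(\lambda+1)_r$, the splitting $(\lambda+1)_{2r-s}=(\lambda+1)_r(\lambda+r+1)_{r-s}$, and $\frac{(2r+1)!}{(2r+1-s)!}=(2r+2-s)_s$. These cancel the factor $(\lambda+1)_r$ and leave precisely $b_s^r(\lambda)=\frac{(2r+2-s)_s(\lambda)_s(\lambda+r+1)_{r-s}}{4^r(\lambda+1/2)_r\,s!}$. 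There is no genuine obstacle: the whole argument is an identification of coefficients, the only mild subtlety being the value $\lambda=0$, already handled by the continuity remark (one could alternatively avoid it by writing $\frac{1}{\lambda}C_{2r+1}^{\lambda}$ directly in terms of the polynomial quantities $(\lambda+1)_{m-1}$ that survive after cancelling the pole).
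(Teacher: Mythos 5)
Your proposal is correct and follows essentially the same route as the paper: expand $C_{2r+1}^{\lambda}$ via \eqref{eq14}, pair the indices $m$ and $2r+1-m$, convert to $P_{2r+1}^{(\lambda-1/2)}$ and $P_{2r+1-2s}^{(-1/2)}$ through \eqref{eq12}, and simplify the resulting coefficient with the duplication formula; your Pochhammer reductions reproduce exactly the paper's closed form for $b_s^r(\lambda)$. The only difference is your explicit handling of $\lambda=0$ by continuity, which is a harmless refinement the paper leaves implicit.
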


\begin{proof} Taking into account   \eqref{eq12} and \eqref{eq14}, we obtain
\[
\frac{\Gamma(2\lambda+2r+1)}{(2r+1)!\,\Gamma(2\lambda)}\,P_{2r+1}^{(\lambda-1/2)}(t)=
C_{2r+1}^{\lambda}(t)=2\sum_{s=0}^{r}\frac{(\lambda)_s(\lambda)_{2r+1-s}}{s!\,(2r+1-s)!}\,P_{2r+1-2s}^{(-1/2)}(t).
\]
Hence,
the duplication formula for gamma function  implies
$$
b_s^r(\lambda)=\frac{2(2r+1)!\,(\lambda)_s\Gamma(2\lambda)\Gamma(\lambda+2r+1-s)}{(2r+1-s)!\,s!\,\Gamma(\lambda)\Gamma(2\lambda+2r+1)}
=\frac{(2r+2-s)_{s}(\lambda)_{s}(\lambda+r+1)_{r-s}}{4^r(\lambda+1/2)_{r}s!}.
$$
\end{proof}

\begin{remark}\label{rem2}
We see that in the decomposition \eqref{eq15} the zero coefficient is positive, and all other coefficients are also positive for $\lambda>0$ and
negative for $\lambda<0$. Note that the normalization of the Gegenbauer polynomials $P_{n}^{(\alpha)}(1) =1$  implies the equality
\[
\sum_{s=0}^{r}b_s^r(\lambda)=1
\]
and for $\lambda>0$ the Gegenbauer polynomials $P_{2r+1}^{(\lambda-1/2)}(t)$ are the convex hull of the Chebyshev polynomials
$P_{2r+1-2s}^{(-1/2)}(t)$, $s=0,1,\dots,r$. In particular, taking into account that 
 $P_{2r+1-2s}^{(-1/2)}(t)=\cos{}(({2r+1-2s})\arccos t)$, we easily obtain for the Gegenbauer polynomials the estimate
\[
|P_{2r+1}^{(\lambda-1/2)}(t)|\le 1,\qquad   t\in [-1,1]\quad\mbox{ and} \quad \lambda>0.
\]
\end{remark}
Thus,  we are in a position to state the required result on the decomposition for the polynomial $q_{2r+1}(t,\lambda)$.




\begin{corollary}\label{cor2}
For any $r\in\mathbb{Z}_+$ and $\lambda>-1/2$, there holds
\[
q_{2r+1}(t,\lambda)=\sum_{s=0}^{r}b_s^r(\lambda)q_{2r-2s+1}(t,0).
\]
\end{corollary}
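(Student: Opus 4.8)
The plan is to reduce the claimed identity for $q_{2r+1}(t,\lambda)$ directly to the already-established decomposition \eqref{eq15} of the Gegenbauer polynomial $P_{2r+1}^{(\lambda-1/2)}$ in terms of the Chebyshev polynomials $P_{2r+1-2s}^{(-1/2)}$. The key observation is that by Lemma~\ref{lem5} (formula \eqref{eq13}) we have $q_{2r+1}(t,\lambda)=1+P_{2r+1}^{(\lambda-1/2)}(t)$ for every admissible $\lambda$, and in particular $q_{2r-2s+1}(t,0)=1+P_{2r-2s+1}^{(-1/2)}(t)$. So the statement to prove is equivalent to
\[
1+P_{2r+1}^{(\lambda-1/2)}(t)=\sum_{s=0}^{r}b_s^r(\lambda)\bigl(1+P_{2r-2s+1}^{(-1/2)}(t)\bigr).
\]

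The right-hand side splits as $\sum_{s=0}^{r}b_s^r(\lambda)+\sum_{s=0}^{r}b_s^r(\lambda)P_{2r-2s+1}^{(-1/2)}(t)$. First I would invoke \eqref{eq15} to identify the second sum with $P_{2r+1}^{(\lambda-1/2)}(t)$ exactly. It then remains to check that the scalar sum $\sum_{s=0}^{r}b_s^r(\lambda)$ equals $1$; but this is precisely the normalization identity recorded in Remark~\ref{rem2}, which follows by evaluating \eqref{eq15} at $t=1$ and using $P_n^{(\alpha)}(1)=1$ for all the polynomials involved. Combining these two facts gives the asserted equality termwise, completing the proof of Corollary~\ref{cor2}.

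There is essentially no serious obstacle here: the corollary is a formal consequence of Lemma~\ref{lem5}, the decomposition \eqref{eq15}, and the normalization in Remark~\ref{rem2}, all of which are available. The only point requiring a line of care is making sure the ``constant'' terms match, i.e.\ that the $+1$ appearing in $q_{2r+1}$ is reproduced with the correct total weight $\sum_s b_s^r(\lambda)=1$ rather than, say, $r+1$; this is exactly where the normalization $P_n^{(\alpha)}(1)=1$ is used, and it is worth stating explicitly so the reader sees why the many copies of $1$ on the right collapse to a single $1$. I would write the proof in three short steps: (i) rewrite both sides via \eqref{eq13}; (ii) apply \eqref{eq15} to the polynomial parts; (iii) apply $\sum_s b_s^r(\lambda)=1$ to the constant parts.
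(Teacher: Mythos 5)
Your proof is correct and coincides with the paper's intended argument: Corollary~\ref{cor2} is stated as an immediate consequence of \eqref{eq13}, the decomposition \eqref{eq15}, and the normalization $\sum_{s=0}^{r}b_s^r(\lambda)=1$ from Remark~\ref{rem2}, exactly the three ingredients you combine. Your explicit remark about why the constant terms collapse to a single $1$ is the only step the paper leaves implicit, and it is handled correctly.
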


Using \eqref{eq13} and
the properties of the coefficients $b_s^r(\lambda)$, $s=0,1,\dots,r$, from
Remark \ref{rem2}, we establish the following result.


\begin{corollary}\label{cor3}
For any $r\in\mathbb{N}$ and $\lambda\in (-1/2,0)$,
the polynomial $q_{2r+1}(t,\lambda)$ is negative at the points
of local minimum of the Chebyshev polynomial $P_{2r+1}^{(-1/2)}(t)=
\cos{}(({2r+1})\arccos t)$.
\end{corollary}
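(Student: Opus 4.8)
The plan is to evaluate the expansion of Corollary~\ref{cor2} at the points where it is cleanest, namely the interior local minima of the Chebyshev polynomial $T:=P_{2r+1}^{(-1/2)}(t)=\cos\bigl((2r+1)\arccos t\bigr)$, where $T=-1$. Using \eqref{eq13} with parameter $0$ we have $q_{2r-2s+1}(t,0)=1+P_{2r-2s+1}^{(-1/2)}(t)=1+\cos\bigl((2r-2s+1)\arccos t\bigr)$, so Corollary~\ref{cor2} reads
\[
q_{2r+1}(t,\lambda)=\sum_{s=0}^{r}b_s^r(\lambda)\Bigl(1+\cos\bigl((2r-2s+1)\arccos t\bigr)\Bigr),\qquad \lambda>-1/2 .
\]

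First I would locate the minima. Writing $\theta=\arccos t\in(0,\pi)$ one has $T=\cos((2r+1)\theta)$, whose interior critical points are $\theta_m=\frac{m\pi}{2r+1}$, $m=1,\dots,2r$, with value $\cos(m\pi)=(-1)^m$; hence the interior local minima of $T$ are the points $t_m=\cos\theta_m$ with $m$ odd, $m\in\{1,3,\dots,2r-1\}$, and $T(t_m)=-1$. Fix such an $m$. Since $2r-2s+1=(2r+1)-2s$ and $m$ is odd,
\[
\cos\bigl((2r-2s+1)\theta_m\bigr)=\cos\Bigl(m\pi-\tfrac{2sm\pi}{2r+1}\Bigr)=(-1)^m\cos\tfrac{2sm\pi}{2r+1}=-\cos\tfrac{2sm\pi}{2r+1},
\]
so $q_{2r-2s+1}(t_m,0)=1-\cos\frac{2sm\pi}{2r+1}\ge 0$ for every $s$; this quantity equals $0$ when $s=0$ and is strictly positive when $s=1$, because $1\le m\le 2r-1$ forces $\frac{m}{2r+1}\notin\mathbb{Z}$.

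Substituting back, the $s=0$ term vanishes and
\[
q_{2r+1}(t_m,\lambda)=\sum_{s=1}^{r}b_s^r(\lambda)\Bigl(1-\cos\tfrac{2sm\pi}{2r+1}\Bigr).
\]
For $\lambda\in(-1/2,0)$ Remark~\ref{rem2} gives $b_s^r(\lambda)<0$ for all $s=1,\dots,r$, while each factor $1-\cos\frac{2sm\pi}{2r+1}$ is $\ge 0$; hence every summand is $\le 0$ and the one with $s=1$ is strictly negative, so $q_{2r+1}(t_m,\lambda)<0$, which is the assertion. (At the endpoint $t=-1$, which is also a local minimum of $T$, one has instead $q_{2r+1}(-1,\lambda)=1+P_{2r+1}^{(\lambda-1/2)}(-1)=1-1=0$ for all $\lambda$, so the statement is to be read for the interior minima.)

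The computation is short; the only points needing care are the correct identification of the interior minima of $T$ and the parity reduction $\cos(m\pi-\beta)=-\cos\beta$ for odd $m$. Everything else is immediate from Corollary~\ref{cor2} together with the sign pattern of the coefficients $b_s^r(\lambda)$ recorded in Remark~\ref{rem2}.
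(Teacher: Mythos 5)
Your argument is correct and is exactly the paper's route: the corollary is obtained from the decomposition of Corollary~\ref{cor2} together with the sign pattern of the $b_s^r(\lambda)$ from Remark~\ref{rem2}, noting that at the interior minima of $P_{2r+1}^{(-1/2)}$ the $s=0$ term vanishes while the remaining terms are nonpositive with the $s=1$ term strictly negative. Your explicit identification of the minima, the verification that the $s=1$ contribution is nonzero, and the caveat about the endpoint $t=-1$ are welcome details that the paper leaves implicit.
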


In the case $R=2r$,
similarly, one can obtain the decomposition of the polynomial $q_{2r}(t,\lambda)$.

\begin{lemma}
For any  $r\in\mathbb{N}$ and $\lambda>-1/2$, there holds 
\[
P_{2r}^{(\lambda-1/2)}(t)=\sum_{s=0}^{r}d_s^r(\lambda)P_{2r-2s}^{(-1/2)}(t),
\]
where
\[
d_s^r(\lambda)=\frac{2(2r+1-s)_{s}(\lambda)_{s}(\lambda+r)_{r-s}}{4^r(\lambda+1/2)_{r}s!},\quad
s=0,1,\dots,r-1,\quad
d_r^r(\lambda)=\frac{(r+1)_{r}(\lambda)_{r}}{4^r(\lambda+1/2)_{r}r!}.
\]
\end{lemma}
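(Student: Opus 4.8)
The plan is to follow the route used to establish \eqref{eq15}, the single new feature being the appearance of an unpaired ``middle'' term. First I would expand $C_{2r}^{\lambda}(t)$ by means of \eqref{eq14}: grouping the summand with index $s$ together with the one with index $2r-s$ (which carry the same cosine) for $s=0,\dots,r-1$, and keeping the index $s=r$ separately, I obtain
\[
C_{2r}^{\lambda}(t)=2\sum_{s=0}^{r-1}\frac{(\lambda)_s(\lambda)_{2r-s}}{s!\,(2r-s)!}\,P_{2r-2s}^{(-1/2)}(t)+\frac{(\lambda)_r^2}{(r!)^2}\,P_0^{(-1/2)}(t),
\]
using $P_{2r-2s}^{(-1/2)}(\cos\theta)=\cos(2r-2s)\theta$ and $P_0^{(-1/2)}\equiv1$. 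Then I would rewrite the left-hand side through \eqref{eq12} as $C_{2r}^{\lambda}(t)=\frac{\Gamma(2\lambda+2r)}{(2r)!\,\Gamma(2\lambda)}\,P_{2r}^{(\lambda-1/2)}(t)$ and read off
\[
d_s^r(\lambda)=\frac{2(2r)!\,\Gamma(2\lambda)}{\Gamma(2\lambda+2r)}\,\frac{(\lambda)_s(\lambda)_{2r-s}}{s!\,(2r-s)!}\ \ (s\le r-1),\qquad d_r^r(\lambda)=\frac{(2r)!\,\Gamma(2\lambda)}{\Gamma(2\lambda+2r)}\,\frac{(\lambda)_r^2}{(r!)^2}.
\]

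The remaining step is a routine simplification. Writing every Pochhammer symbol through the Gamma function and applying the duplication formula to $\Gamma(2\lambda)$ and to $\Gamma(2\lambda+2r)=\Gamma(2(\lambda+r))$ gives $\Gamma(2\lambda)/\Gamma(2\lambda+2r)=\frac{\Gamma(\lambda)\Gamma(\lambda+1/2)}{4^{r}\,\Gamma(\lambda+r)\Gamma(\lambda+r+1/2)}$. Substituting this and regrouping the surviving $\Gamma$-factors into $(2r+1-s)_s=(2r)!/(2r-s)!$, $(\lambda)_s$, $(\lambda+r)_{r-s}$ and $(\lambda+1/2)_r=\Gamma(\lambda+r+1/2)/\Gamma(\lambda+1/2)$ for $s\le r-1$, and into $(r+1)_r=(2r)!/r!$, $(\lambda)_r$ and $(\lambda+1/2)_r$ for $s=r$, produces exactly the claimed closed forms.

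I do not expect a genuine obstacle; the only point requiring attention is the bookkeeping of the unpaired index $s=r$, which is why the sum encoding the paired contributions stops at $r-1$ and why $d_r^r(\lambda)$ has a different shape (before simplification it carries the square $(\lambda)_r^2$ rather than a product $(\lambda)_s(\lambda)_{2r-s}$ with distinct indices, and the combinatorial prefactor is halved). As a by-product, combining this lemma with \eqref{eq13} and $\sum_{s=0}^r d_s^r(\lambda)=1$ (which follows by evaluating at $t=1$) yields the decomposition $q_{2r}(t,\lambda)=\sum_{s=0}^{r}d_s^r(\lambda)\,q_{2r-2s}(t,0)$, the even analogue of Corollary~\ref{cor2}, on which the subsequent sign analysis for $\lambda\in(-1/2,0)$ can be based just as in Corollary~\ref{cor3}.
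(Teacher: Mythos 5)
Your proposal is correct and follows essentially the same route as the paper: expand $C_{2r}^{\lambda}$ via \eqref{eq14}, pair the indices $s$ and $2r-s$ (leaving the unpaired middle term $s=r$ with the squared factor $((\lambda)_r)^2$ and no factor of $2$), convert to $P_{2r}^{(\lambda-1/2)}$ by \eqref{eq12}, and simplify the coefficients with the duplication formula. The bookkeeping of the middle term and the resulting closed forms for $d_s^r(\lambda)$ and $d_r^r(\lambda)$ match the paper's computation exactly.
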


\medskip
\begin{proof}
In light of  \eqref{eq12} and \eqref{eq14}, we have
\[
\frac{\Gamma(2\lambda+2r)}{(2r)!\,\Gamma(2\lambda)}\,P_{2r}^{(\lambda-1/2)}(t)=
C_ {2r}^{\lambda}(t)=2\sum_{s=0}^{{r-1}}\frac{(\lambda)_s(\lambda)_{2r-s}}{s!\,(2r-s)!}\,P_{2r-2s}^{(-1/2)}(t)+\frac{((\lambda)_r)^2}{(r!)^2}\,P_{0}^{(-1/2)}(t).
\]
Then, using
the duplication formula for gamma function, we obtain for  $s=0,1,\dots,r-1$
\[
d_s^r(\lambda)=
\frac{2(2r)!\,(\lambda)_s\Gamma(2\lambda)\Gamma(\lambda+2r-s)}{(2r-s)!\,s!\,\Gamma(\lambda)\Gamma(2\lambda+2r)}
=\frac{2(2r+1-s)_{s}(\lambda)_{s}(\lambda+r)_{r-s}}{4^r(\lambda+1/2)_{r}s!},
\]
while for  $s=r$,
\[
d_r^r(\lambda)=\frac{(2r)!\,(\lambda)_r\Gamma(2\lambda)\Gamma(\lambda+r)}{(r!)^{2}\Gamma(\lambda)\Gamma(2\lambda+2r)}
=\frac{(r+1)_{r}(\lambda)_{r}}{4^r(\lambda+1/2)_{r}r!}.
\]
\end{proof}

\begin{corollary}\label{cor4}
For any  $r\in\mathbb{N}$ and $\lambda>-1/2$, the following decomposition holds
\[
q_{2r}(t,\lambda)=\sum_{s=0}^{r}d_s^r(\lambda)q_{2r-2s}(t,0).
\]
\end{corollary}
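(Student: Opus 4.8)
The plan is to prove Corollary~\ref{cor4} in exact analogy with the proof of Corollary~\ref{cor2}, by combining the preceding Lemma (the decomposition $P_{2r}^{(\lambda-1/2)}(t)=\sum_{s=0}^r d_s^r(\lambda)P_{2r-2s}^{(-1/2)}(t)$) with the identity \eqref{eq13}. Recall that \eqref{eq13} expresses $q_{2r}(t,\lambda)=1+\sign v\,P_{2r}^{(\lambda-1/2)}(t)$, and in particular (taking $\lambda\to$ the parameter $0$, which is admissible since $0>-1/2$) that $q_{2r}(t,0)=1+\sign v\,P_{2r}^{(-1/2)}(t)$.

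First I would substitute the Lemma's decomposition into \eqref{eq13}:
\[
q_{2r}(t,\lambda)=1+\sign v\sum_{s=0}^{r}d_s^r(\lambda)P_{2r-2s}^{(-1/2)}(t).
\]
Next, the key algebraic point is that $\sum_{s=0}^r d_s^r(\lambda)=1$: this follows by evaluating the Lemma's decomposition at $t=1$ and using the normalization $P_n^{(\alpha)}(1)=1$ for all the Gegenbauer polynomials involved (exactly as recorded for the odd case in Remark~\ref{rem2}). Using this, I would rewrite the constant term $1=\sum_{s=0}^r d_s^r(\lambda)$ and distribute it across the sum:
\[
q_{2r}(t,\lambda)=\sum_{s=0}^{r}d_s^r(\lambda)\bigl(1+\sign v\,P_{2r-2s}^{(-1/2)}(t)\bigr)=\sum_{s=0}^{r}d_s^r(\lambda)\,q_{2r-2s}(t,0),
\]
where the last equality is again \eqref{eq13} applied with parameter $0$ to each index $2r-2s$. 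This completes the argument.

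There is really no main obstacle here; the statement is a formal consequence of the previously established Lemma and of \eqref{eq13}. The only points requiring a word of care are (i) checking that $\sum_s d_s^r(\lambda)=1$, which is immediate from the $t=1$ evaluation, and (ii) noting that the term $s=r$ produces $q_0(t,0)=1+\sign v\,P_0^{(-1/2)}(t)=1+\sign v$, which is consistent with the convention that $P_0^{(-1/2)}\equiv 1$ used throughout. One might also remark, parallel to Corollary~\ref{cor3}, that for $\lambda\in(-1/2,0)$ the coefficients $d_s^r(\lambda)$ with $s\ge 1$ are negative (since $(\lambda)_s<0$ while the remaining factors are positive) while $d_0^r(\lambda)>0$ and they still sum to $1$; hence this decomposition is the natural tool for deciding the sign of $q_{2r}(t,\lambda)$ on $[-1,1]$ in the regime $\lambda<0$, which is the purpose of the section.
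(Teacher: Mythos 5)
Your argument is correct and is exactly the paper's (implicit) route: Corollary~\ref{cor4} is read off from the preceding Lemma's decomposition of $P_{2r}^{(\lambda-1/2)}$ into Chebyshev polynomials together with \eqref{eq13} and the normalization $\sum_{s}d_s^r(\lambda)=1$ obtained by evaluating at $t=1$. Your side remarks on $q_0(t,0)=1+\sign v$ and on the signs of $d_s^r(\lambda)$ for $\lambda\in(-1/2,0)$ are likewise consistent with what the paper records in Corollary~\ref{cor5}.
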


Since $ d_0^r(\lambda)>0$ and  $\sign d_s^r(\lambda)=\sign\lambda$, $s=1,\dots,r$,
for $\lambda>-1/2$, taking into account
\eqref{eq13},
we arrive at the following result.

\begin{corollary}\label{cor5}
If $r\in\mathbb{N}$ and $\lambda\in (-1/2,0)$, then the polynomial
$q_{2r}(t,\lambda)$ is negative at the points of local extremum of the
Chebyshev polynomial $P_{2r}^{(-1/2)}(t)=\cos{}(2r\arccos t)$.
\end{corollary}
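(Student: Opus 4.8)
The plan is to derive the statement from the decomposition of Corollary~\ref{cor4} and the identity \eqref{eq13}, following the even-index analogue of the argument behind Corollary~\ref{cor3}. Write $T_n(t)=P_n^{(-1/2)}(t)=\cos(n\arccos t)$ for the Chebyshev polynomial of the first kind, normalized by $T_n(1)=1$. By \eqref{eq13} we have $q_{2r-2s}(t,0)=1+\sign v\,T_{2r-2s}(t)$, while evaluating the Gegenbauer decomposition $P_{2r}^{(\lambda-1/2)}(t)=\sum_{s=0}^{r}d_s^r(\lambda)P_{2r-2s}^{(-1/2)}(t)$ (the lemma preceding Corollary~\ref{cor4}) at $t=1$ yields $\sum_{s=0}^{r}d_s^r(\lambda)=1$. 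Substituting into Corollary~\ref{cor4} gives
\[
q_{2r}(t,\lambda)=1+\sign v\,P_{2r}^{(\lambda-1/2)}(t),\qquad
P_{2r}^{(\lambda-1/2)}(t)=\sum_{s=0}^{r}d_s^r(\lambda)\,T_{2r-2s}(t),
\]
and, as the explicit formulas for the $d_s^r(\lambda)$ show, for $\lambda\in(-1/2,0)$ one has $d_0^r(\lambda)>0$ and $d_s^r(\lambda)<0$ for $1\le s\le r$. The interior critical points of $T_{2r}$ are the $r$ local minima, at which $T_{2r}=-1$, and the $r-1$ local maxima, at which $T_{2r}=+1$. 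Thus the corollary amounts to the two inequalities
\[
P_{2r}^{(\lambda-1/2)}(t_0)<-1\ \ (t_0\in(-1,1),\ T_{2r}(t_0)=-1),
\qquad
P_{2r}^{(\lambda-1/2)}(t_1)>1\ \ (t_1\in(-1,1),\ T_{2r}(t_1)=+1),
\]
since then $q_{2r}(t_0,\lambda)<0$ for $\sign v=+1$ and $q_{2r}(t_1,\lambda)<0$ for $\sign v=-1$, which is the asserted negativity (realized, as in the even-order case $R=2r$, by the corresponding choices of $v$).

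Both inequalities come from the convex-combination structure ($\sum_s d_s^r(\lambda)=1$) together with the bound $|T_n|\le1$ on $[-1,1]$. At $t_0$ with $T_{2r}(t_0)=-1$: for $1\le s\le r$ the relations $d_s^r(\lambda)<0$ and $T_{2r-2s}(t_0)\ge-1$ give $d_s^r(\lambda)T_{2r-2s}(t_0)\le-d_s^r(\lambda)$, so
\[
P_{2r}^{(\lambda-1/2)}(t_0)=-d_0^r(\lambda)+\sum_{s=1}^{r}d_s^r(\lambda)\,T_{2r-2s}(t_0)\le-\sum_{s=0}^{r}d_s^r(\lambda)=-1.
\]
Equality would force $T_{2r-2s}(t_0)=-1$ for every $s$ with $d_s^r(\lambda)\ne0$; taking $s=r$ this reads $T_0(t_0)=-1$, which is impossible since $T_0\equiv1$. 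Hence the first inequality is strict. The maxima occur only for $r\ge2$; the same computation with $T_{2r-2s}(t_1)\le1$ gives $P_{2r}^{(\lambda-1/2)}(t_1)\ge\sum_{s=0}^{r}d_s^r(\lambda)=1$, and equality would force $T_{2r-2s}(t_1)=1$ for all $s$ with $d_s^r(\lambda)\ne0$; taking $s=r-1$ this reads $T_2(t_1)=2t_1^2-1=1$, i.e.\ $t_1=\pm1$, contradicting $t_1\in(-1,1)$.

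The single delicate point is this last strictness argument. The bare convex-combination estimate only yields $P_{2r}^{(\lambda-1/2)}(t_0)\le-1$ and $P_{2r}^{(\lambda-1/2)}(t_1)\ge1$; to upgrade these one needs one Chebyshev summand that cannot hit the unfavourable extreme value at an \emph{interior} critical point, and $T_0\equiv1$ (at the minima) and $T_2(t)=2t^2-1$, which attains $1$ only at $t=\pm1$ (at the maxima), are tailor-made for this. Correspondingly the endpoints $t=\pm1$ must be understood to be excluded from the statement, since there $P_{2r}^{(\lambda-1/2)}(\pm1)=1$ and $q_{2r}(\pm1,\lambda)=1+\sign v\ge0$. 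With this in place, Corollary~\ref{cor4}, identity \eqref{eq13}, and the sign pattern of the $d_s^r(\lambda)$ give the result.
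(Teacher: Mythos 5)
Your argument is correct and is exactly the route the paper intends: combine the decomposition of Corollary~\ref{cor4} with identity \eqref{eq13} and the sign pattern $d_0^r(\lambda)>0$, $d_s^r(\lambda)<0$ ($1\le s\le r$, $\lambda\in(-1/2,0)$), the paper simply leaving all details to the reader. Your write-up additionally supplies the strictness step (using $T_0\equiv1$ at the minima and $T_2(t)=2t^2-1$ at the interior maxima) and the correct reading of the $\sign v$ dependence and of ``local extremum'' as interior critical points, none of which the paper makes explicit.
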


Corollaries \ref{cor3} and \ref{cor5} show that   the polynomials
$q_{2r}(t,\lambda)$ and $q_{2r+1}(t,\lambda)$ for $r\ge 1$ change sign  and,
therefore, the method of the proof of the estimate $|B_{\kappa,a}(x,y)|\le 1$
used in Theorem \ref{thm2}  cannot be applied when $\lambda<0$. In this
case the problem remains open.

\subsection{On positive definiteness}\label{subsec3-1}
Recall that a continuous function $f$ on $\mathbb{R}$ is positive definite if for any
 $\{v_s\}_{s=1}^{n}\subset\mathbb{R}$ and $\{z_s\}_{s=1}^{n}\subset\mathbb{C}$
there holds
\[
\sum_{s,l=1}^{n}z_s\overline{z_{l}}f(v_s-v_l)\ge 0.
\]
Bochner's theorem states that any continuous positive definite function $f(v)$, $f(0)=1$, is the Fourier transform of a probability measure.

Recall that the function $e_{2r+1}(v,\lambda)$ is given in \eqref{eq7} and \eqref{eq11}, \eqref{eq13}.  Taking into account \eqref{eq8}, \eqref{eq11----} and \eqref{eq13},
let us define also the functions
\[
e_{2r,+}(v,\lambda)=c_{\lambda}\int_{-1}^{1}(1-t^2)^{\lambda-1/2}(1+P_{2r}^{(\lambda-1/2)}(t))\,e^{-ivt}\,dt,
\]
\[
e_{2r,-}(v,\lambda)=c_{\lambda}\int_{-1}^{1}(1-t^2)^{\lambda-1/2}(1-P_{2r}^{(\lambda-1/2)}(t))\,e^{-ivt}\,dt.
\]

Combining Theorem \ref{thm2} for $d=1$, Lemma~\ref{lem5}, Corollaries~\ref{cor2},
\ref{cor3}, \ref{cor4}, and \ref{cor5}, we deduce  the following

\begin{corollary}
The functions  $e_{2r+1}(v,\lambda)$ and $e_{2r,\pm}(v,\lambda)$ 
are positive definite  if and only if $\lambda\ge 0$.
\end{corollary}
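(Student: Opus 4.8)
The plan is to prove the equivalence by combining the Bochner criterion with the nonnegativity analysis of the polynomials $q_{2r}$ and $q_{2r+1}$ already carried out. For the ``if'' direction, assume $\lambda\ge 0$. By Lemma~\ref{lem5} together with the estimate $|P_n^{(\lambda-1/2)}(t)|\le 1$ on $[-1,1]$ (valid for $\lambda\ge 0$; see \cite[Chapt.~VII, 7.32.2]{Se74} or Remark~\ref{rem2} for the odd case), the factors $1+P_{2r+1}^{(\lambda-1/2)}(t)$ and $1\pm P_{2r}^{(\lambda-1/2)}(t)$ are nonnegative on $[-1,1]$. Hence in each of the integral representations \eqref{eq11}, \eqref{eq11----} (and the defining formulas for $e_{2r,\pm}$) the measure
\[
d\nu(t)=c_{\lambda}(1-t^2)^{\lambda-1/2}\bigl(1+P^{(\lambda-1/2)}_{\bullet}(t)\bigr)\,dt
\]
is a nonnegative finite Borel measure on $[-1,1]$, and since $P_n^{(\lambda-1/2)}(1)=1$ and $c_\lambda\int_{-1}^1(1-t^2)^{\lambda-1/2}\,dt=1$, its total mass equals $1$; moreover the odd part integrates to $0$, so $e(0,\lambda)=1$. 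Thus $e_{2r+1}(v,\lambda)$ and $e_{2r,\pm}(v,\lambda)$ are, up to the sign in the exponent, Fourier transforms of probability measures, hence continuous positive definite functions by Bochner's theorem (positive definiteness is insensitive to replacing $t$ by $-t$ in the measure).

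For the ``only if'' direction, suppose $\lambda\in(-1/2,0)$; I must show each of the three functions fails to be positive definite. Here I would argue by contradiction: if $e_{2r+1}(\cdot,\lambda)$ were positive definite with value $1$ at the origin, Bochner's theorem would force it to be the Fourier transform of a probability measure, in particular $\|e_{2r+1}(\cdot,\lambda)\|_\infty=e_{2r+1}(0,\lambda)=1$. But by Corollary~\ref{cor3} the polynomial $q_{2r+1}(t,\lambda)=1+P_{2r+1}^{(\lambda-1/2)}(t)$ is \emph{negative} at the local minima of the Chebyshev polynomial on $[-1,1]$, so it changes sign there; since it is a genuine signed density against $c_\lambda(1-t^2)^{\lambda-1/2}\,dt$, the inversion formula (the representing measure of $e_{2r+1}(\cdot,\lambda)$, if it were a measure, would be uniquely $q_{2r+1}(t,\lambda)c_\lambda(1-t^2)^{\lambda-1/2}\,dt$ by uniqueness of Fourier inversion for integrable densities) is not nonnegative — contradiction. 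The same argument applies to $e_{2r,\pm}$ using Corollary~\ref{cor5}: $q_{2r}(t,\lambda)=1\pm P_{2r}^{(\lambda-1/2)}(t)$ is negative at the local extrema of $\cos(2r\arccos t)$ where $P_{2r}^{(-1/2)}(t)=\mp 1$, hence also changes sign. One also needs the boundary case $r=0$ excluded or trivial: for $q_1(t,\lambda)=1-t\ge0$ the statement is the classical positive-definiteness of the Dunkl kernel and holds for all $\lambda>-1/2$, consistent with ``$\lambda\ge0$'' only if we note $e_1$ corresponds to $r=0$, $a=2$ where the claim is about $\lambda\ge -1/2$; I would simply state the corollary for $r\ge 1$, matching Corollaries~\ref{cor3} and \ref{cor5}, or note that for $r=0$ positive definiteness holds for all admissible $\lambda$.

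The main obstacle is making the ``only if'' direction fully rigorous: from sign-change of the density $q$ one must deduce failure of positive definiteness, which requires knowing that \emph{if} $e$ were positive definite then its Bochner measure would be exactly the absolutely continuous measure with density $q(t)c_\lambda(1-t^2)^{\lambda-1/2}$. This follows from uniqueness in Fourier inversion: $e_{2r+1}(\cdot,\lambda)\in L^1(\mathbb{R})$ is false in general (the kernel decays only like $|v|^{-\lambda-1/2}$ from each Bessel term, which is not integrable for $\lambda\le 1/2$), so one cannot invert naively. The clean fix is to test positive definiteness directly: pick a nonnegative Schwartz function $\varphi$ supported where $q_{2r+1}(t,\lambda)<0$, form $\widehat\varphi$, and compute $\iint \widehat\varphi(v)\overline{\widehat\varphi(w)}\,e_{2r+1}(v-w,\lambda)\,dv\,dw$; by Fubini and Parseval this equals $c_\lambda\int_{-1}^1 |\varphi(t)|^2 q_{2r+1}(t,\lambda)(1-t^2)^{\lambda-1/2}\,dt<0$, contradicting positive definiteness. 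I would carry out exactly this quadratic-form computation — it is the technically delicate step and the natural place to invoke Corollaries~\ref{cor3} and \ref{cor5} — and I expect it to occupy the bulk of the proof, the ``if'' direction being immediate from Bochner.
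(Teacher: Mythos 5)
Your proof is correct and follows essentially the same route as the paper, which deduces the corollary by combining the nonnegativity of the Gegenbauer densities for $\lambda\ge 0$ (Bochner's theorem) with the sign changes from Corollaries~\ref{cor3} and~\ref{cor5} for $\lambda\in(-1/2,0)$, leaving implicit the quadratic-form (or uniqueness-of-Fourier-transform) argument that you rightly identify as the delicate step and carry out. Your caveat that the ``only if'' direction requires $r\ge 1$ (for $r=0$ one has $q_1(t,\lambda)=1-t\ge 0$, so $e_1$ is positive definite for all admissible $\lambda$) is consistent with the paper, whose Corollaries~\ref{cor3} and~\ref{cor5} are stated for $r\in\mathbb{N}$.
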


It is worth mentioning that  our proof of Theorem \ref{thm2} for $d=1$, in fact,  was
based on the positive definiteness of these functions.

\vspace{0.6mm}

\section{The kernel of the generalized Fourier transform in the multivariate case}

\subsection{When  the kernel is bounded by one}\label{subsec4-1}
In this section,
for the sake of completeness,
 we give the proof of Theorem~\ref{thm2} in the general case, following ideas from the paper \cite{CBL18}.
We stress that the proof below also allows one to deal with the case $d=1$.


Let $d\in\mathbb{N}$, $a>0$,  $\langle\kappa\rangle\ge 0$,
and $\eta=\lambda_{\kappa}=\langle\kappa\rangle+\frac{d-2}{2}>0$.
For $w\ge 0$ and $\tau\in [-1,1]$ define
\begin{equation}\label{eq16}
\Psi_{a}^d(w,\tau)=2^{2\eta/a}\Gamma\Bigl(\frac{2\eta+a}{a}\Bigr)\sum_{j=0}^{\infty}e^{-\frac{i\pi j}{a}}\,\frac{\eta+j}{\eta}\,w^{-2\eta/a}J_{\frac{2(\eta+j)}{a}}(w)C_{j}^{\eta}(\tau).
\end{equation}
Putting in \eqref{eq16}
\[
x,y\in\mathbb{R}^d,\quad x=|x|x',\quad y=|y|y',\quad x',y'\in\mathbb{S}^{d-1},\quad w=\frac{2}{a}\,(|x||y|)^{a/2},\quad \tau=\langle x',\,y'\rangle,
\]
we obtain the function
\begin{equation*}
K_{a}^d(x,y)=a^{2\eta/a}\Gamma\Bigl(\frac{2\eta+a}{a}\Bigr)(|x||y|)^{-\eta}\sum_{j=0}^{\infty}e^{-\frac{i\pi j}{a}}\,\frac{\eta+j}{\eta}\,J_{\frac{2(\eta+j)}{a}}\Bigl(\frac{2}{a}(|x||y|)^{a/2}\Bigr)C_{j}^{\eta}(\langle x',\,y'\rangle).
\end{equation*}
Note that $K_{a}^d(x,y)=K_{a}^d(y,x)$ and $\Psi_{a}^d(0,\tau)=K_{a}^d(0,y)=1$.

Let $V_{\kappa}f(x)$ be the interwining operator in the Dunkl harmonic analysis \cite{Ro02}, which is a positive operator satisfying
\[
V_{\kappa}f(x)=\int_{\mathbb{R}^d}f(\xi)\,d\mu_{x}^{\kappa}(\xi).
\]
The representing measures $\mu_{x}^{\kappa}(\xi)$ are compactly supported probability
measures with $\text{supp}\,\mu_{x}^{\kappa}(\xi)\subset\text{co}{}\{gx\colon g\in G\}$
\cite{Ro99}.

The kernel of the generalized Fourier transform $\mathcal{F}_{\kappa,a}$ is given by \cite[Chaps. 4, 5]{SKO12}
\begin{multline}\label{eq17}
B_{\kappa,a}(x,y)=V_{\kappa}K_{a}^d(x,|y|\cdot)(y')\\
=a^{2\eta/a}\Gamma\Bigl(\frac{2\eta+a}{a}\Bigr)(|x||y|)^{-\eta}\sum_{j=0}^{\infty}e^{-\frac{i\pi j}{a}}\,\frac{\eta+j}{\eta}\,J_{\frac{2(\eta+j)}{a}}\Bigl(\frac{2}{a}(|x||y|)^{a/2}\Bigr)V_{\kappa}C_{j}^{\eta}(\langle x',{\cdot}\,\rangle)(y').
\end{multline}
If $\langle \kappa\rangle=0$, that is, $\eta=(d-2)/2$, then $V_{\kappa}=\text{I}$ is the identity operator and $K_{a}^d(x,y)=B_{0,a}(x,y)$.

Since the operator $V_{\kappa}$ is positive and $V_{\kappa}1=1$, the proof of equality \eqref{eq2} 
 can proceed as follows:
if  $|\Psi_{a}^d(w,\tau)|\le 1$, then $|B_{\kappa,a}(x,y)|\le 1$.

Let further $\frac{2}{a}\in\mathbb{N}$, $a=\frac{2}{R}$. The inequality
$|\Psi_{a}^d(w,\tau)|\le 1$ was proved in \cite{CBL18} under the conditions $\eta>0$ and
$\eta=(d-2)/2$.
We note that the proof of this  estimate in \cite{CBL18}  
also holds for $\eta=\langle \kappa\rangle+(d-2)/2>0$. Hence,
\eqref{eq2} is valid for $\langle \kappa\rangle+(d-2)/2>0$. If $\eta=0$, then
$d=1$, $\kappa=1/2$ or $d=2$, $\kappa\equiv0$ and in these cases
\eqref{eq2} holds as well (see Theorem \ref{thm2}  and \cite{BIE13}).
%
This completes the proof of  Theorem~\ref{thm2}.

\subsection{Positive definiteness of  $\Psi_{a}^d$.
Another proof of Theorem~\ref{thm2}}

In \cite{CBL18}, to evaluate the kernel $B_{0,a}(x,y)$, the authors used the Laplace transform of the function $\Psi_{a}^d$ given by  \eqref{eq16}.
In the general case  $d\ge 1$, we  give another proof of the estimate $|\Psi_{a}^d|\le 1$ based on the approach from the papers  \cite{BL20,DD21}, which allows one to show positive definiteness of the function $\Psi_{a}^d(w,\tau)$ with respect to $w$. Note  also that for $a=1,2$ the functions 
\begin{equation*}
\Psi_{1}^d(w,\tau)=j_{\lambda_{\kappa}-1/2}(w\sqrt{(1+\tau)/2}\,), \quad \Psi_{2}^d(w,\tau)=e^{-iw\tau},
\end{equation*}
are positive definite
(see \cite[Example 4.18]{SKO12}, \cite{CBL18}).



Let
\[
I_{\eta}(b)=\Bigl(\frac{b}{2}\Bigr)^{\eta}\,\sum_{j=0}^{\infty}\frac{1}{j!\,\Gamma(j+\eta+1)}\Bigl(\frac{b^2}{4}\Bigl)^j
\]
be the modified Bessel function of the first kind (see \cite[Chapt. 7, 7.2.2]{BE53a}) and\[
\Phi_2^{(m)}(\beta_1,\dots,\beta_m;\gamma;x_1,\dots,x_m)=
\sum_{j_1,\dots,j_m\ge 0}\frac{(\beta_1)_{j_1}\cdots (\beta_m)_{j_m}}{(\gamma)_{j_1+\cdots+j_m}}\,\frac{x_1^{j_1}}{j_{1}!}\cdots\frac{x_m^{j_m}}{j_{m}!}
\]
be the second Humbert function of $m$ variables (see \cite[Chapt. 2, 2.1.1.2]{Ex83}).
In the case when $\gamma-\sum_{j=1}^m\beta_j>0$ and  $\beta_j>0$, $j=1,\dots,m$, 
the hypergeometric
function $\Phi_2^{(m)}$ admits the following integral representation
\begin{equation}\label{eq18}
\Phi_2^{(m)}(\beta_1,\dots,\beta_m;\gamma;x_1,\dots,x_m)
=C_{\beta}^{(\gamma)}\int_{T^m}e^{\sum_{j=1}^mx_jt_j}\Bigl(1-\sum_{j=1}^{m}t_j\Bigr)^{\gamma-\sum_{j=1}^m\beta_j-1}
\prod_{j=1}^{m}t_j^{\beta_j-1}\,dt_1\cdots dt_m,
\end{equation}
where
\[
C_{\beta}^{(\gamma)}=\frac{\Gamma(\gamma)}{\Gamma(\gamma-\sum_{j=1}^{m}\beta_j)\prod_{j=1}^{m}\Gamma(\beta_j)}
\]
and
\[
T^m=\Bigl\{(t_1,\dots,t_m)\colon t_j\ge 0,\ j=1,\dots,m,\ \sum_{j=1}^{m}t_j\le 1\Bigr\}
\]
is the unit simplex in $\mathbb{R}^m$ (\cite{CW09,Hu20}).

Taking into account the results from
 \cite{BL20,DD21}, we obtain the following proposition, which gives an alternative proof of Theorem~\ref{thm2}.

\begin{proposition}\label{thm3}
Let $d\in\mathbb{N}$, $a=\frac{2}{R}$, $R\in\mathbb{N}$, $R\ge 2$, $\eta=\langle \kappa\rangle+\frac{d-2}{2}>0$, $\tau\in [-1,1]$ and $q=\arccos \tau$. The function  $w \mapsto\Psi_{a}^d(w,\tau)$ is a positive definite entire function of exponential type
\begin{equation}\label{eq19}
\theta(a,\tau)=
\begin{cases}
\cos\frac{q}{R}, & R=2r\ \text{and}\ 0\le q\le\pi \ \text{or}\ R=2r+1\ \text{and}\ 0\le q\le\pi/2,\\
\cos\frac{\pi-q}{R}, & R=2r+1\ \text{and}\ \pi/2\le q\le\pi.
\end{cases}
\end{equation}
\end{proposition}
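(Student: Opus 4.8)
The plan is to establish Proposition~\ref{thm3} by combining the classical series expansion of a product of the form $J_\nu(w\sin t)\,(\text{power of }w)$ against Gegenbauer polynomials with the Humbert-function integral representation \eqref{eq18}. First I would recall the Gegenbauer addition/product formula which expresses $w^{-\eta}J_{\eta+j}(w)$ in terms of an integral over $[0,\pi]$ of $e^{iw\cos\theta}C_j^\eta(\cos\theta)\,(\sin\theta)^{2\eta}$ (essentially \cite[Chapt. X, 10.9(38)]{BE53a}, quoted already in the Remark after Corollary~\ref{cor1}). Substituting this into the defining series \eqref{eq16} for $\Psi_a^d(w,\tau)$ and using the generating identity $\sum_{j\ge 0}\frac{\eta+j}{\eta}\,r^j C_j^\eta(\tau)C_j^\eta(\cos\theta)=\dots$ (the Poisson-type kernel for Gegenbauer polynomials) would not be the cleanest route; instead I would follow \cite{BL20,DD21} and directly recognize, after inserting the factor $e^{-i\pi j/a}=e^{-i\pi j R/2}$ and resumming in $j$, that $\Psi_a^d(w,\tau)$ equals (a constant times) an $R$-fold integral of $e^{iw\,\phi(t_1,\dots,t_R,\theta)}$ against a density supported on a compact set, i.e.\ a $\Phi_2^{(R)}$-type object evaluated at purely imaginary arguments scaled by $w$. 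The exponents $\beta_j$ and $\gamma$ work out so that $\gamma-\sum\beta_j>0$ and each $\beta_j>0$ precisely because $\eta>0$ and $R\in\mathbb N$; this is where the hypothesis $\eta>0$ enters and is used.

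Granting that representation, positive definiteness in $w$ is immediate from Bochner's theorem: $\Psi_a^d(\cdot,\tau)$ is the Fourier transform (in $w$) of a nonnegative, compactly supported measure, namely the pushforward under the phase map $\phi$ of the (nonnegative) density on the simplex times $(\sin\theta)^{2\eta}\,d\theta$, scaled appropriately. Normalization $\Psi_a^d(0,\tau)=1$ (already noted in the text) shows the total mass is $1$, so in fact $\Psi_a^d(\cdot,\tau)$ is the characteristic function of a probability distribution, which gives both positive definiteness and the bound $|\Psi_a^d(w,\tau)|\le 1$ — recovering Theorem~\ref{thm2}.

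For the exponential type \eqref{eq19}: the entire-function-of-exponential-type claim follows because the support of the representing measure is compact, and the type equals the essential supremum of $|\phi|$ over that support. So the computation reduces to maximizing the phase $\phi$ — which will have the shape $\sin\theta\cdot(\text{weighted average of }\cos\text{ of }R\text{ angles})$ or, after the right trigonometric substitution, $\cos\frac{q'}{R}$ for an angle $q'$ ranging over an interval determined by $q=\arccos\tau$ — over the simplex and over $\theta\in[0,\pi]$. Here the parity of $R$ matters: for $R=2r$ the map $v\mapsto(v/2r)^{2r}\sign v$ versus $(v/2r)^{2r}$ changes how the endpoint $q=\pi$ is reached, which is exactly the dichotomy recorded in \eqref{eq19}, matching the two cases $b_{\kappa,a}(x)=e_{2r}$ and $e_{2r+1}$ in \eqref{eq7}–\eqref{eq8}. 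I would carry this out by a direct Lagrange-multiplier / monotonicity argument on $[0,\pi]$.

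The main obstacle I anticipate is the bookkeeping in the first step: correctly collapsing the $j$-sum in \eqref{eq16} into the $\Phi_2^{(R)}$ integral \eqref{eq18} with the right parameters, making sure the phases $e^{-i\pi j/a}$ conspire (via the half-angle identities for Bessel functions $J_{\nu+1/2}$ when $R$ is such that half-integer orders appear, or via the multiplication theorem for $J_\nu$) to produce a phase function $\phi$ that is \emph{real}, so that the measure is genuinely nonnegative rather than merely of bounded variation. This real-phase property is the crux; once it is in hand, Bochner's theorem and the support computation are routine. An alternative, cleaner-to-verify route — which I would present if the resummation proves unwieldy — is to invoke directly the results of \cite{BL20} and \cite{DD21}, which already contain the needed positive-definiteness statement for $\Psi_a^d$ in the untwisted ($\langle\kappa\rangle=0$) case, and observe, as in Subsection~\ref{subsec4-1}, that their argument only uses $\eta>0$ and hence extends verbatim to $\eta=\langle\kappa\rangle+(d-2)/2>0$; the exponential-type formula \eqref{eq19} is then read off from their explicit description of the representing measure's support.
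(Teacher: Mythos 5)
Your proposal follows essentially the same route as the paper: identify $\Psi_a^d(w,\tau)$ with the dihedral-type series of \cite{BL20,DD21}, i.e.\ $f_{R,\eta}(-iw,\tau)=b_0\Phi_2^{(R-1)}(\eta,\dots,\eta;R\eta;b_1-b_0,\dots,b_{R-1}-b_0)$ with $b_j=-iw\cos\frac{q-2\pi j}{R}$ (note it is $\Phi_2^{(R-1)}$ on the $(R-1)$-simplex, not $\Phi_2^{(R)}$, and the phase is automatically real since the $b_j$ are real multiples of $-iw$, so your worry there dissolves), then apply the simplex integral representation \eqref{eq18} — valid precisely because $\eta>0$ — to exhibit $\Psi_a^d(\cdot,\tau)$ as the Fourier transform of a nonnegative compactly supported measure, and read off the type from $\max\bigl(\cos\frac{q}{R},\,-\min_{1\le j\le R-1}\cos\frac{q-2\pi j}{R}\bigr)$. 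This is exactly the paper's argument, so the proposal is correct.
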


\begin{proof} Let us consider the function
\[
 f_{R,\eta}(b,\tau)=\Gamma(R\eta+1)\Bigl(\frac{b}{2}\Bigr)^{\eta}\sum_{j=0}^{\infty}
 \frac{j+\eta}{\eta}\,I_{R(j+\eta)}(b)C_{j}^{\eta}(\tau), \quad R\in\mathbb{N}.
\]
Let $\tau=\langle x',\,y'\rangle$, $b=-iw$, $R=\frac{2}{a}$, $R\ge 2$. Since $I_{\eta}(-iw)=e^{-i\frac{\pi\eta}{2}}J_{\eta}(w)$ \cite[Chapt. 7, 7.2.2]{BE53a}, then $f_{R,\eta}(b,\tau)=\Psi_{a}^d(w,\tau)$. It is known (see \cite{BL20,DD21}) that
\[
f_{R,\eta}(b,\tau)=b_0\Phi_{2}^{(R-1)}(\eta,\dots,\eta;R\eta;b_1-b_0,\dots,b_{R-1}-b_0),
\]
where $b_j=b\cos{}(\frac{q-2\pi j}{R})$, $j=0,\dots,R-1$.
In light of \eqref{eq18} and using $\eta>0$, we get
\begin{multline}\label{eq20}
\Psi_{a}^d(w,\tau)=f_{R,\eta}(-iw,\tau)\\
=C_{\eta}\int_{T^{R-1}}e^{-iw\{\cos{}(\frac{q}{R})+\sum_{j=1}^{R-1}(\cos{}(\frac{q-2\pi j}{R})-\cos{}(\frac{q}{R}))t_j\}}\Bigl(1-\sum_{j=1}^{R-1}t_j\Bigr)^{\eta-1}\prod_{j=1}^{R-1}t_j^{\eta-1}\,
\,dt_1\cdots dt_{R-1},
\end{multline}
where $C_{\eta}=\Gamma(R\eta)/(\Gamma(\eta))^R$.
For $\alpha>0$ and $\beta\in\mathbb{R}$, the  function $\alpha e^{i\beta w}$ is positive definite, hence the function $\Psi_{a}^d(w,\tau)$ is also positive definite  with respect to $w$.

The representation \eqref{eq20} implies  that $\Psi_a^d(w,\tau)$ is an entire function of exponential type
with respect to $w$. Let us calculate its type. Since for $j=1,\dots,R-1$, $q=\arccos\tau$, $\tau\in [-1,1]$,
$
\cos\frac{q}{R}-\cos\frac{q-2\pi j}{R}=2\cos\frac{\pi i}{R}\cos\frac{\pi j-q}{R}\ge 0,
$ 
 then for any $(t_1,\dots,t_{R-1})\in T^{R-1}$,
\[
\min_{1\le j\le R-1}\cos\frac{q-2\pi j}{R}\le \cos\frac{q}{R}+\sum_{j=1}^{R-1}\Bigl(\cos\frac{q-2\pi j}{R}-\cos\frac{q}{R}\Bigr)t_j\le \cos\frac{q}{R}.
\]
Since
\begin{equation*}
\min_{1\le j\le R-1}\cos\frac{q-2\pi j}{R}=
\begin{cases}-\cos\frac{q}{R}, & R=2r\ \text{or}\ R=2r+1\ \text{and}\ 0\le q\le\pi/2,\\
-\cos\frac{\pi-q}{R}, & R=2r+1\ \text{and}\ \pi/2\le q\le\pi,
\end{cases}
\end{equation*}
the type of the function $\Psi_{a}^d(w,\tau)$ is given by \eqref{eq19}.
\end{proof}

\begin{remark}\label{rem3}
Using \eqref{eq16} and Lemmas~\ref{lem1}, \ref{lem2} we get for $t,\tau\in (-1,1)$, $\lambda=2\eta/a$,
\begin{equation*}
\Psi_{a}^d(w,\tau)=c_{\lambda}\int_{-1}^{1}(1-t^2)^{\lambda-1/2}\sum_{j=0}^{\infty}\frac{\eta+j}{\eta}\,P_{2j/a}^{(\lambda-1/2)}(t)
C_{j}^{\eta}(\tau)e^{-iwt}\,dt.
\end{equation*}
It is clear that the condition
\begin{equation*}
\psi_a^d(t,\tau)=\sum_{j=0}^{\infty}\frac{\eta+j}{\eta}\,P_{2j/a}^{(\lambda-1/2)}(t)
C_{j}^{\eta}(\tau)\ge 0,\quad t,\,\tau\in (-1,1),
\end{equation*}
is equivalent to  positive definiteness of the function $\Psi_{a}^d(w,\tau)$.
Proposition~\ref{thm3} shows that the function $\psi_{a}^d(w,\tau)$ is positive and its support as a function of $t$ lies on the interval $I_a=[-\theta(a,\tau),\theta(a,\tau)]$.
If $a=1$, then $R=2$, $\lambda=2\eta$ and $\theta(a,\tau)=\cos\frac{q}{2}=\sqrt{(1+\tau)/2}$. Using \cite[Chapt. VIII, 8.7(31)]{Ba54}, we obtain
\begin{align*}
\psi_{1}^d(t,\tau)&=(2\pi c_{\lambda})^{-1}(1-t^2)^{1/2-2\eta}\int_{-\infty}^{\infty}j_{\eta-1/2}(w\sqrt{(1+\tau)/2})e^{iwt}\,dw
\\
&=c(\eta)(1-t^2)^{1/2-2\eta}(1+\tau)^{1/2-\eta}(1+\tau-2t^2)^{\eta-1}\chi_{I_1}(t)\\
&=c(\eta)(1-t^2)^{1/2-2\eta}(1+\tau)^{1/2-\eta}(\tau-P_2^{(-1/2)}(t))^{\eta-1}\chi_{I_1}(t),
\end{align*}
where $\chi_{I_1}(t)$ is the characteristic function of the interval $I_1$ and $P_2^{(-1/2)}(t)=
\cos{}(2\arccos t)$ is the Chebyshev polynomial of degree two.
\end{remark}

\subsection{Parameters when the kernel is not bounded by one}\label{subsec4-2}
We will show that in some cases  the uniform norm of the kernel $B_{\kappa,a}(x,y)$
is not bounded by $1$ and 
 can be either finite or infinite.
 We mention that the conditions when the norm is not finite are known only in the one-dimensional case. 
In particular, if $d=1$, $0<a<2$, and $\kappa<\frac{1}{2}-\frac{a}{4}$, then $\|B_{\kappa,a}\|_{\infty}=\infty$ (see Proposition~\ref{concon}).


\begin{proof}[Proof of Theorem~\ref{thm4}] Let first $d=1$ and $\lambda=\lambda_{\kappa,a}=\frac{2\kappa-1}{a}$. Changing variables $x=(\frac{a}{2}|v|)^{2/a}\sign{}v$
in \eqref{eq6}, we get
\begin{multline}\label{eq21}
e_{\kappa,a}(v)=b_{\kappa,a}\Bigl(\Bigl(\frac{a}{2}\,|v|\Bigr)^{2/a}\sign{}v\Bigr)
=j_{\lambda}(v)+\frac{\Gamma(\lambda+1)\cos\frac{\pi}{a}}{2^{2/a}\Gamma(\lambda+1+2/a)}\,
|v|^{2/a}j_{\lambda+\frac{2}{a}}(v)\sign{}v
\\
{}-i\,\frac{\Gamma(\lambda+1)\sin\frac{\pi}{a}}{2^{2/a}\Gamma(\lambda+1+2/a)}\,
|v|^{2/a}j_{\lambda+\frac{2}{a}}(v)\sign{}v.
\end{multline}
If $0<a\le 1$ and $\kappa=\frac{1}{2}-\frac{a}{4}$, then $\lambda=-1/2$, $j_{-1/2}(v)=\cos v$, $j_{1/2}(v)=\frac{\sin v}{v}$.

Assume first that $\cos\frac{\pi}{a}=0$ or $a=\frac{2}{2r+1}$, $r\in\mathbb{N}$. Since $\sin\frac{\pi}{a}\neq 0$, $j_{-1/2}(2\pi)=1$, $j_{1/2}(2\pi)=0$, and $j_{2r+1/2}(2\pi)\neq 0$ (see \cite[Chapt. 15, 15.28]{Wa66}), then from \eqref{eq21} we get $|e_{\kappa,a}(2\pi)|=|e_{2r+1}(2\pi,-1/2)|>1$.

Let now  $\cos{}(\pi/a)\neq 0$. In light of \eqref{eq21}, there holds 
\[
|e_{\kappa,a}(v)|\ge \Bigl|\cos v+\frac{\Gamma(\frac{1}{2})\cos\frac{\pi}{a}}{2^{2/a}\Gamma(\frac{2}{a}+\frac{1}{2})}\,
|v|^{2/a}j_{\frac{2}{a}-\frac{1}{2}}(v)\sign{}v\Bigr|.
\]
Since
\[
(2\pi s)^{2/a}j_{\frac{2}{a}-\frac{1}{2}}(2\pi s)=\frac{2^{2/a}\Gamma(\frac{2}{a}+\frac{1}{2})}{\Gamma(\frac{1}{2})}\,
\Bigl\{\cos\frac{\pi}{a}+O\Bigl(\frac{1}{s}\Bigr)\Bigr\}
\]
as $s\to +\infty$
(see \cite[Chapt.~VII, 7.1]{Wa66}), we deduce that for sufficiently large $s$
\[
|e_{\kappa,a}(2\pi s)|\ge 1+\cos^2\frac{\pi}{a}+O\Bigl(\frac{1}{s}\Bigr)>1,
\]
which completes the proof of  \eqref{eq3}. Note that similar arguments also implies the proof of \eqref{eq3} for  $1<a<2$ and $\kappa=\frac{1}{2}-\frac{a}{4}$.

\smallbreak
Now we consider the multivariate case. Suppose that  $d\ge 1$, $a\in (1,2)\cup (2,\infty)$, $\langle\kappa\rangle\ge 0$, and $\eta=\lambda_{\kappa}$, $\lambda=\frac{2\eta}{a}=\frac{2\langle\kappa\rangle+d-2}{a}$. In view of \eqref{eq5} and using estimate \eqref{eq3},  we may assume that  $\eta,\lambda>-1/2$ and $\cos\frac{\pi}{a}\neq 0$ for $d\ge 1$.

Note that  \eqref{eq17} can be equivalently written as 
\[
B_{\kappa,a}(x,y)
=\sum_{j=0}^{\infty}e^{-\frac{i\pi j}{a}}\,\frac{\eta+j}{\eta}\,\frac{\Gamma(\lambda+1)}{2^{2j/a}\,\Gamma(\lambda+1+2j/a)}\,w^{2j/a}
j_{\lambda+\frac{2j}{a}}(w)V_{\kappa}C_{j}^{\eta}(\langle x',{\cdot}\,\rangle)(y').
\]
Since
\[
|j_{\lambda}(w)|\le 1,\quad \Bigl|\frac{\eta+j}{\eta}\,V_{\kappa}C_j^{\eta}(\langle x',{\cdot}\,\rangle)(y')\Bigr|\le c(\eta)(j^{2\eta}+1),\quad \frac{1}{\eta}\,C_1^{\eta}(t)=2t,
\]
then for $0\le w\le 1$ we have
\begin{multline*}
\Bigl|\sum_{j=2}^{\infty}e^{-\frac{i\pi j}{a}}\,\frac{\eta+j}{\eta}\,\frac{\Gamma(\lambda+1)}{2^{2j/a}\Gamma(\lambda+1+2j/a)}\,w^{2j/a}
j_{\lambda+\frac{2j}{a}}(w)V_{\kappa}C_{j}^{\eta}(\langle
x',{\cdot}\,\rangle)(y')\Bigr|\\
{}\le c(\eta)w^{4/a}\sum_{j=2}^{\infty}\frac{\Gamma(\lambda+1)(j^{2\eta}+1)}{2^{2j/a}\Gamma(\lambda+1+2j/a)}
\le c(a,\kappa)w^{4/a}
\end{multline*}
and
\begin{multline*}
B_{\kappa,a}(x,y)=j_{\lambda}(w)+\frac{2(\eta+1)\Gamma(\lambda+1)\cos\frac{\pi}{a}}{2^{2/a}\Gamma(\lambda+1+2/a)}\,w^{2/a}
j_{\lambda+\frac{2}{a}}(w)V_{\kappa}(\langle x',{\cdot}\,\rangle)(y')\\
{}-i\,\frac{2(\eta+1)\Gamma(\lambda+1)\sin\frac{\pi}{a}}{2^{2/a}\Gamma(\lambda+1+2/a)}\,w^{2/a}
j_{\lambda+\frac{2}{a}}(w)V_{\kappa}(\langle x',{\cdot}\,\rangle)(y')+O(w^{4/a}),\quad w\to 0.
\end{multline*}
Therefore,
\begin{equation}\label{eq22}
|B_{\kappa,a}(x,y)|\ge \Bigl|j_{\lambda}(w)+\frac{2(\eta+1)\Gamma(\lambda+1)\cos\frac{\pi}{a}}{2^{2/a}\Gamma(\lambda+1+2/a)}\,w^{2/a}
j_{\lambda+\frac{2}{a}}(w)V_{\kappa}(\langle x',{\cdot}\,\rangle)(y')+O(w^{4/a})\Bigr|.
\end{equation}

Since the operator $V_{\kappa}$
is isomorphism on the space of homogeneous polynomials of degree $1$, then for some
  $x',y'\in\mathbb{S}^{d-1}$ we deduce
\begin{equation}\label{eq23}
V_{\kappa}(\langle x',{\cdot}\,\rangle)(y')=\int_{\mathbb{R}^d}\langle x',\xi\rangle\,d\mu_{y'}^{\kappa}(\xi)\neq 0,\quad \sign (V_{\kappa}(\langle x',{\cdot}\,\rangle)(y'))=\sign{}(\cos{}(\pi/a)).
\end{equation}
We have
\[
j_{\lambda}(w)=1+O(w^2),\quad j_{\lambda+\frac{2}{a}}(w)=1+O(w^2)
\]
as $w\to 0$.
This, \eqref{eq22} and \eqref{eq23} imply that, for
given $x',y'\in\mathbb{S}^{d-1}$ and sufficiently small positive~$w$,
\[
|B_{\kappa,a}(x,y)|\ge 1+\frac{2(\eta+1)\Gamma(\lambda+1)|\!\cos\frac{\pi}{a}|}{2^{2/a}\Gamma(\lambda+1+2/a)}\,w^{2/a}
|V_{\kappa}(\langle x',{\cdot}\,\rangle)(y')|+O(w^{4/a})+O(w^2)>1.
\]
\end{proof}

\begin{remark}\label{rem4}
(i) For $0<a\le 1$, $\frac{2}{a}\in\mathbb{N}$, equality \eqref{eq2}  holds provided  $\kappa\ge\kappa_0(a)$, where  $\frac{1}{2}-\frac{a}{4}<\kappa_0(a)\le 1/2$.
The problem of determining  $\kappa_0(a)$ is open.

\smallbreak
(ii) Theorem \ref{thm4} shows that our conjecture in \cite{GIT16} asserting that
\eqref{eq3} holds under the condition $2\langle\kappa\rangle+d+a\ge3$
is not valid for $d\ge 1$ and $a\in (1,2)\cup (2,\infty)$.
\end{remark}

\vspace{0.6mm}

\section{Image of the Schwartz space}\label{sec5}

In this section, for the $(\kappa,a)$-generalized Fourier transform, we study the image of the Schwartz space $\mathcal{S}(\mathbb{R}^d)$.
\subsection{The case of $a>0$}
Let $d\nu_{\eta}(u)=b_{\eta}u^{2\eta+1}\,du$, $b_{\eta}^{-1}=2^{\eta}\Gamma(\eta+1)$,
$d\nu_{\eta,a}(u)=b_{\eta,a}u^{2\eta+a-1}du$, $b_{\eta,a}^{-1}=a^{2\eta/a}\Gamma(2\eta/a+1)$,
\[
H_{\eta}(f_0)(v)=\int_{0}^{\infty}f_0(u)j_{\eta}(uv)\,d\nu_{\eta}(u),\quad \eta\ge-1/2,
\]
be the Hankel transform and
\[
H_{\eta,a}(f_0)(v)=\int_{0}^{\infty}f_0(u)j_{\frac{2\eta}{a}}\Bigl(\frac{2}{a}\,(vu)^{a/2} \Bigr)\,d\nu_{\eta,a}(u),\quad 2\eta+a\ge 1,
\]
be $a$-deformed Hankel transform (see \cite{SKO12,GIT16}).

Recall that $\lambda_{\kappa}=\langle \kappa\rangle+(d-2)/2$, $\lambda=\lambda_{\kappa,a}=2\lambda_{\kappa}/a$.

\begin{example}\label{exa1}
Consider  $f(x)=e^{-|x|^2}\in\mathcal{S}(\mathbb{R}^d)$, $f(x)=f_0(\rho)$, $\rho=|x|$.
If $|y|=v$, $\rho=(a/2)^{1/a}u^{2/a}$, then (see \cite{SKO12,GIT16})
\begin{align*}
\mathcal{F}_{\kappa,a}(f)(y)=H_{\lambda_{\kappa},a}(f_0)(v)&=
\int_{0}^{\infty}f_0(\rho)j_{\frac{2\lambda_{\kappa}}{a}}\Bigl(\frac{2}{a}\,(v\rho)^{a/2} \Bigr)\,d\nu_{\lambda_{\kappa},a}(\rho)\\
&=\int_{0}^{\infty}\exp \Bigl(-\Bigl(\frac{a}{2}\Bigr)^{2/a}u^{4/a}\Bigr)j_{\lambda}
\Bigl(\Bigl(\frac{2}{a}\Bigr)^{1/2}\,v^{a/2}u\Bigr)\,d\nu_{\lambda}(u).
\end{align*}
Let
\[
g_a(u)=\exp \Bigl(-\Bigl(\frac{a}{2}\Bigr)^{2/a}u^{4/a}\Bigr).
\]
Assuming that  $\mathcal{F}_{\kappa,a}(f)(y)$ is rapidly decreasing at infinity, the same is true for the function
\[
G_a(v)=H_{\lambda}(g_a)(v)=\int_{0}^{\infty}g_a(u)j_{\lambda}(uv)\,d\nu_{\lambda}(u)
\]
as $v\to\infty$. If $a=4$, then in light of \cite[Chap.~VIII, 8.6(4)]{Ba54} the function
\[
G_a(v)=\frac{c_{\lambda_{\kappa},a}}{(1+v^2)^{\lambda+3/2}}
\]
decreases at infinity not faster than a power function.
If $2/a$ is a non-integer and  $a\neq
4$, then  $4/a$ is also non-integer, and therefore $g_a(u)$
has finite smoothness at the origin.
On the other hand, since  $g_e,G_a\in L^1(\mathbb{R}_+,d\nu_{\lambda})$, we obtain
\[
g_e(u)=H_{\lambda}(G_a)(u)=\int_{0}^{\infty}G_a(v)j_{\lambda}(uv)\,d\nu_{\lambda}(v)
\]
and the right-hand side is infinitely differentiable at the origin due to fast decreasing of $G_a(v)$.
This contradiction shows us that the generalized Fourier transform  cannot rapidly decrease at infinity.
Moreover, let $\partial f=f'$. Since
\[
(\partial_v^2j_{\lambda}(uv))\bigr|_{v=0}=-\frac{u^2}{2(\lambda+1)},\quad (\partial_v^4j_{\lambda}(uv))\bigr|_{v=0}=\frac{3u^4}{4(\lambda+1)(\lambda+2)},
\]
then 
\[
G_a(v)=\xi_0+\xi_1v^2+O(v^4),\qquad v\to 0,\qquad
 \xi_1\neq 0,
\]
and
\[
\mathcal{F}_{\kappa,a}(f)(y)=\xi_0+\widetilde{\xi}_1|y|^a+O(|y|^{2a}),\qquad y\to 0,\qquad \widetilde{\xi}_1\neq 0.
\]
If $a$ is not  even, then $\mathcal{F}_{\kappa,a}(f)(y)$ has finite smoothness at the origin.
\end{example}

The following statement follows from Example \ref{exa1}.

\begin{proposition}\label{lem6}
Let $d\in\mathbb{N}$.

\smallbreak
\textup{(i)} \
The condition $\frac{a}{2}\in\mathbb{N}$ is necessary for the embedding $\mathcal{F}_{\kappa,a}(\mathcal{S}(\mathbb{R}^d))\subset C^{\infty}(\mathbb{R}^d)$ to hold.

\smallbreak
\textup{(ii)} \
The condition $\frac{2}{a}\in\mathbb{N}$ is necessary for the set $\mathcal{F}_{\kappa,a}(\mathcal{S}(\mathbb{R}^d))$ to consist of rapidly decreasing functions at infinity.

\end{proposition}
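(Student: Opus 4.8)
The plan is to deduce Proposition~\ref{lem6} directly from the analysis carried out in Example~\ref{exa1} applied to the Gaussian $f(x)=e^{-|x|^2}$, which lies in $\mathcal{S}(\mathbb{R}^d)$. The computation in Example~\ref{exa1} already shows that $\mathcal{F}_{\kappa,a}(f)(y)=H_{\lambda}(G_a)(|y|^{a/2}\sqrt{2/a})$ up to constants, where $G_a=H_\lambda(g_a)$ and $g_a(u)=\exp(-(a/2)^{2/a}u^{4/a})$; so the qualitative behavior of $\mathcal{F}_{\kappa,a}(f)$ at $0$ and at $\infty$ is governed by $a$ alone. Both parts of the proposition are contrapositive statements, so I would argue by exhibiting, for each ``bad'' value of $a$, a Schwartz function whose image fails the asserted regularity or decay — and the Gaussian does the job in both cases.

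For part~(i), suppose $\frac{a}{2}\notin\mathbb{N}$, i.e. $a$ is not an even positive integer. Example~\ref{exa1} gives the expansion
\[
\mathcal{F}_{\kappa,a}(f)(y)=\xi_0+\widetilde{\xi}_1|y|^a+O(|y|^{2a}),\qquad y\to 0,\qquad \widetilde\xi_1\ne 0,
\]
obtained from $G_a(v)=\xi_0+\xi_1 v^2+O(v^4)$ by substituting $v=\sqrt{2/a}\,|y|^{a/2}$. If $a$ is not an even integer, the term $|y|^a$ is not a polynomial in the Cartesian coordinates $y_1,\dots,y_d$ and is not $C^\infty$ at the origin (for $d=1$ one sees this already from $|y|^a$; for $d\ge 2$ one restricts to a line through the origin). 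Hence $\mathcal{F}_{\kappa,a}(f)\notin C^\infty(\mathbb{R}^d)$, which shows the embedding $\mathcal{F}_{\kappa,a}(\mathcal{S}(\mathbb{R}^d))\subset C^\infty(\mathbb{R}^d)$ forces $\frac a2\in\mathbb{N}$. One must be slightly careful at the single exceptional value $a=4$, where $g_a$ itself is a Gaussian-like power and $G_a$ is the explicit rational function $c_{\lambda_\kappa,a}(1+v^2)^{-\lambda-3/2}$: here $\mathcal{F}_{\kappa,a}(f)$ is still smooth, so the Gaussian alone does not finish the case $a=4$ — but $a=4$ is even, so it is not a counterexample we need, and the argument above applies to every non-even $a$.

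For part~(ii), suppose $\frac2a\notin\mathbb{N}$. The Example shows $\mathcal{F}_{\kappa,a}(f)(y)=cG_a(\sqrt{2/a}\,|y|^{a/2})$ with $G_a=H_\lambda(g_a)$, $g_a\in L^1(\mathbb{R}_+,d\nu_\lambda)$. If $\mathcal{F}_{\kappa,a}(f)$ were rapidly decreasing at infinity, then $G_a(v)$ would decay faster than any power as $v\to\infty$, whence $G_a\in L^1(\mathbb{R}_+,d\nu_\lambda)$ and, by Hankel inversion, $g_a=H_\lambda(G_a)$ would be $C^\infty$ at the origin (fast decay of $G_a$ lets one differentiate under the integral sign arbitrarily many times, using $(\partial_u^{2k}j_\lambda(uv))|_{u=0}=O(v^{2k})$). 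But $g_a(u)=\exp(-(a/2)^{2/a}u^{4/a})$ has only finite smoothness at $u=0$ when $\frac4a\notin 2\mathbb{N}$, i.e. when $\frac2a\notin\mathbb{N}$ — again excepting $a=4$, which is excluded here since $\frac24\notin\mathbb{N}$ would be false. This contradiction proves that rapid decrease of the whole image $\mathcal{F}_{\kappa,a}(\mathcal{S}(\mathbb{R}^d))$ forces $\frac2a\in\mathbb{N}$.

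The only real subtlety — the ``main obstacle'' — is making the smoothness/decay transfer through the Hankel transform fully rigorous: namely that ``$G_a$ rapidly decreasing'' genuinely implies ``$g_a$ is $C^\infty$ at $0$'', and conversely that ``$g_a$ has a singularity of order $4/a$ at $0$'' genuinely obstructs rapid decay of $G_a$. These are standard facts about the Hankel transform (finite smoothness at the origin $\Leftrightarrow$ limited polynomial decay of the transform, via the asymptotics of $J_\lambda$), and I would cite the relevant Abramowitz–Stegun/Watson-type estimates used already in Example~\ref{exa1} rather than reprove them; the bulk of the work is bookkeeping the substitution $v\mapsto|y|^{a/2}$ and the two exceptional checks at $a=4$.
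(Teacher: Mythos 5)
Your proposal takes exactly the paper's route: the paper's proof of Proposition~\ref{lem6} is literally Example~\ref{exa1}, i.e.\ the Gaussian $f(x)=e^{-|x|^2}$, with the same two mechanisms you use — the expansion $\xi_0+\widetilde{\xi}_1|y|^a+O(|y|^{2a})$ at the origin for part~(i), and the Hankel--inversion contradiction $g_a=H_{\lambda}(G_a)$ for part~(ii).

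There is, however, one concrete error in your part~(ii): you dismiss $a=4$ on the grounds that ``$\frac24\notin\mathbb{N}$ would be false,'' but $\frac{2}{4}=\frac12$ is of course not a natural number, so $a=4$ is precisely one of the values for which you must exhibit a non--rapidly--decreasing image, and your smoothness argument genuinely breaks there: $g_4(u)=e^{-cu}$ is one-sidedly analytic at the origin, so no ``finite smoothness'' contradiction is available. The repair is already written in your own part~(i) discussion and is the one the paper uses: for $a=4$ one has the closed form $G_4(v)=c_{\lambda_{\kappa},a}(1+v^2)^{-\lambda-3/2}$, which decays only polynomially, so $\mathcal{F}_{\kappa,4}(f)$ is not rapidly decreasing. (A related precision: for $a=4/(2j+1)$ the function $g_a(u)=\exp(-cu^{2j+1})$ is also one-sidedly analytic, so ``finite smoothness at $u=0$'' should really be read as ``cannot coincide on $\mathbb{R}_+$ with a smooth \emph{even} function,'' which is what $H_{\lambda}(G_a)$ is; the nonzero odd power $u^{2j+1}$ in the expansion then gives the contradiction, and this formulation in fact covers $a=4$ as well.) With that sentence inserted, your argument is complete and coincides with the paper's.
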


\begin{remark}\label{rem2---}
We see that $\mathcal{F}_{\kappa,a}(\mathcal{S}(\mathbb{R}^d))=\mathcal{S}(\mathbb{R}^d)$ only for $a=2$. Hence, the claim  in \cite[Lemma~2.12]{Jo16} that the  Schwartz space is  invariant
under the generalized Fourier transform is false for $a\neq 2$.
\end{remark}

The conditions in Proposition~\ref{lem6} are also sufficient at least in the one-dimensional case. Indeed, suppose $\lambda=(2\kappa-1)/a\ge -1/2$ and denote the
even and odd parts of $f$,
as usual, by
\[
f_{e}(x)=\frac{f(x)+f(-x)}{2},\quad f_{o}(x)=\frac{f(x)-f(-x)}{2}.
\]
Then the one-dimensional generalized Fourier transform  can be written as
\begin{equation}\label{eq24}
\mathcal{F}_{\kappa,a}(f)(y)=2\int_{0}^{\infty}(B_{\kappa,a}(\,{\cdot}\,,y))_{e}(x) f_e(x)\,d\mu_{\kappa,a}(x)+
2\int_{0}^{\infty}(B_{\kappa,a}(\,{\cdot}\,,y))_{o}(x) f_o(x)\,d\mu_{\kappa,a}(x).
\end{equation}
Putting in  \eqref{eq24}
\begin{equation}\label{eq25}
\begin{gathered}
x=x(u)=\Bigl(\frac{a}{2}\Bigr)^{\frac{1}{a}}\,u^{\frac{2}{a}},\quad  g(u)=Af(u)=f(x(u))=f\Bigl(\Bigl(\frac{a}{2}\Bigr)^{\frac{1}{a}}\,u^{\frac{2}{a}}\Bigr),\\
g_e(u)=Af_e(u),\quad g_o(u)=Af_o(u),\quad x,u\in\mathbb{R}_+,
\end{gathered}
\end{equation}
and taking into account  \eqref{eq4}, we deduce
\begin{multline}\label{eq26}
\mathcal{F}_{\kappa,a}(f)(y)=\int_{0}^{\infty}Af_e(u)j_{\lambda}\Bigl(\Bigl(\frac{2}{a}\Bigr)^{1/2}|y|^{a/2}u\Bigr)\,d\nu_{\lambda}(u)
\\
{}+e^{-\frac{\pi i}{a}}\Bigl(\frac{2}{a}\Bigr)^{1/a}y
\int_{0}^{\infty}u^{-2/a}Af_o(u)j_{\lambda+\frac{2}{a}}\Bigl(\Bigl(\frac{2}{a}\Bigr)^{1/2}|y|^{a/2}u\Bigr)\,d\nu_{\lambda+\frac{2}{a}}(u)
\\
=H_{\lambda}(g_e)\Bigl(\Bigl(\frac{2}{a}\Bigr)^{1/2}|y|^{a/2}\Bigr)+
e^{-\frac{\pi i}{a}}\Bigl(\frac{2}{a}\Bigr)^{1/a}yH_{\lambda+\frac{2}{a}}(u^{-2/a}g_o)\Bigl(\Bigl(\frac{2}{a}\Bigr)^{1/2}|y|^{a/2}\Bigr).
\end{multline}

If $f\in \mathcal{S}(\mathbb{R})$, then $g_e(u)$, $u^{-2/a}g_o(u)$ decrease rapidly at infinity and there exist
\[
\lim_{u\to 0+0}u^{-2/a}g_o(u)=\Bigl(\frac{a}{2}\Bigr)^{\frac{1}{a}}f'(0).
\]
Therefore, the representation \eqref{eq26} shows that, for even $a$ we have the embedding
$\mathcal{F}_{\kappa,a}(\mathcal{S}(\mathbb{R}))\subset C^{\infty}(\mathbb{R})$.

Further, if $\frac{2}{a}\in\mathbb{N}$, $f\in\mathcal{S}(\mathbb{R})$, then the functions  $g_e(u),u^{-2/a}g_o(u) \in\mathcal{S}(\mathbb{R}_+)$ as well as (see \cite{Ro02}) the even functions
\[
 H_{\lambda}(g_e)(v),\quad H_{\lambda+\frac{2}{a}}(u^{-2/a}g_o)(v)\in \mathcal{S}(\mathbb{R}).
\]

Hence, we prove the following

\begin{proposition}\label{lem9}
Suppose $\frac{2}{a}\in\mathbb{N}$, $\lambda\ge-1/2$, $f\in\mathcal{S}(\mathbb{R})$, then
\begin{equation}\label{eq27}
\mathcal{F}_{\kappa,a}(f)(y)=F_1\bigl(|y|^{a/2}\bigr)+yF_2\bigl(|y|^{a/2}\bigr),
\end{equation}
where the even functions  $F_1,F_2\in\mathcal{S}(\mathbb{R})$.
\end{proposition}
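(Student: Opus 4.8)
The plan is to read off \eqref{eq27} directly from the representation \eqref{eq26} together with the mapping properties of the Hankel transform on the Schwartz class. First I would fix $f\in\mathcal S(\mathbb R)$ and pass to the functions $g=Af$ via \eqref{eq25}; since $\frac{2}{a}\in\mathbb N$, the substitution $x=(a/2)^{1/a}u^{2/a}$ is a smooth change of variable on $\mathbb R_+$ that maps $\mathcal S(\mathbb R_+)$ (functions that extend to even Schwartz functions on $\mathbb R$) to itself, so that $g_e(u)=Af_e(u)$ is an even Schwartz function on $\mathbb R$. The odd part requires slightly more care: $f_o(x)=x\,h(x^2)$ with $h$ smooth, so $Af_o(u)=(a/2)^{1/a}u^{2/a}\,h((a/2)^{2/a}u^{4/a})$, and hence $u^{-2/a}Af_o(u)=(a/2)^{1/a}h((a/2)^{2/a}u^{4/a})$, which, because $4/a\in 2\mathbb N$ (as $2/a\in\mathbb N$), is again an even Schwartz function of $u$; in particular the limit $\lim_{u\to 0+}u^{-2/a}g_o(u)=(a/2)^{1/a}f'(0)$ exists as claimed.

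Next I would invoke the known fact (see \cite{Ro02}) that the Hankel transform $H_{\eta}$ maps the space of even Schwartz functions on $\mathbb R$ bijectively onto itself for every $\eta\ge-1/2$; applying this with $\eta=\lambda$ to $g_e$ and with $\eta=\lambda+\frac{2}{a}$ to $u^{-2/a}g_o$ produces even functions $H_{\lambda}(g_e)\in\mathcal S(\mathbb R)$ and $H_{\lambda+2/a}(u^{-2/a}g_o)\in\mathcal S(\mathbb R)$. Then \eqref{eq26} reads
\[
\mathcal F_{\kappa,a}(f)(y)=H_{\lambda}(g_e)\bigl((2/a)^{1/2}|y|^{a/2}\bigr)+e^{-\pi i/a}(2/a)^{1/a}\,y\,H_{\lambda+\frac{2}{a}}(u^{-2/a}g_o)\bigl((2/a)^{1/2}|y|^{a/2}\bigr),
\]
so setting $F_1(t)=H_{\lambda}(g_e)\bigl((2/a)^{1/2}t\bigr)$ and $F_2(t)=e^{-\pi i/a}(2/a)^{1/a}H_{\lambda+\frac{2}{a}}(u^{-2/a}g_o)\bigl((2/a)^{1/2}t\bigr)$ gives exactly \eqref{eq27}, with $F_1,F_2$ even and Schwartz because a linear rescaling of the argument preserves $\mathcal S(\mathbb R)$ and evenness.

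The only genuinely delicate point — and the step I would write out most carefully — is justifying that $u^{-2/a}g_o(u)$ is a bona fide even Schwartz function (not merely continuous at $0$), since this is what allows the second Hankel transform $H_{\lambda+2/a}$ to be applied within the Schwartz category and to yield a Schwartz output; this is exactly where the hypothesis $\frac{2}{a}\in\mathbb N$ is used twice, once to make the change of variables smooth and once (via $4/a$ even) to control the odd part. Everything else is bookkeeping: the identity \eqref{eq26} is already available from \eqref{eq4} and \eqref{eq24}, and the Schwartz-invariance of $H_\eta$ on even functions is a classical fact cited in the excerpt. I would close by remarking that for $\frac{2}{a}\in\mathbb N$ the expression $|y|^{a/2}=|y|^{1/R}$ (with $R=2/a$) composed with even Schwartz $F_1,F_2$ is indeed rapidly decreasing, which recovers the decay statement announced before the proposition.
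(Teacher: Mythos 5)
Your proposal is correct and follows essentially the same route as the paper: split $f$ into even and odd parts, change variables via \eqref{eq25} to reduce to two Hankel transforms as in \eqref{eq26}, check that $g_e$ and $u^{-2/a}g_o$ extend to even Schwartz functions (the point where $\frac{2}{a}\in\mathbb{N}$ enters), and invoke the Schwartz invariance of $H_\eta$ on even functions from \cite{Ro02}. Your explicit verification that $u^{-2/a}g_o$ is genuinely even and Schwartz (via $f_o(x)=x\,h(x^2)$) is slightly more detailed than the paper's, but the argument is the same.
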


Thus, for $2/a\in\mathbb{N}$ the generalized Fourier transform decreases rapidly at infinity
and we arrive at the following result.

\begin{proposition}
\textup{(i)} \
The embedding $\mathcal{F}_{\kappa,a}(\mathcal{S}(\mathbb{R}))\subset C^{\infty}(\mathbb{R})$ is valid if and only if $\frac{a}{2}\in\mathbb{N}$.

\smallbreak
\textup{(ii)} \
The set $\mathcal{F}_{\kappa,a}(\mathcal{S}(\mathbb{R}))$ consists of rapidly decreasing functions at infinity  if and only if $\frac{2}{a}\in\mathbb{N}$.
\end{proposition}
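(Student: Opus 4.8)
The plan is to obtain the proposition by assembling facts already established in this section: the two \emph{only if} directions are exactly the one-dimensional instances of Proposition~\ref{lem6}, while the two \emph{if} directions follow from the analysis of the representation \eqref{eq26} and from Proposition~\ref{lem9}. So the work is mainly to check that these pieces genuinely apply for $d=1$ and fit together, with no new estimate required.

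For necessity I would invoke Example~\ref{exa1} with the even Schwartz function $f(x)=e^{-x^{2}}$, for which $\mathcal{F}_{\kappa,a}(f)$ reduces to the one-dimensional deformed Hankel transform $H_{\lambda_{\kappa},a}$ of the radial profile, so the whole computation there is one-dimensional in nature. The expansion $\mathcal{F}_{\kappa,a}(f)(y)=\xi_{0}+\widetilde{\xi}_{1}|y|^{a}+O(|y|^{2a})$ with $\widetilde{\xi}_{1}\neq 0$ obtained there shows $\mathcal{F}_{\kappa,a}(f)\notin C^{\infty}(\mathbb{R})$ unless $a\in 2\mathbb{N}$, which is the \emph{only if} part of (i); and the Hankel-duality argument there (using $g_{a}\in L^{1}(\mathbb{R}_{+},d\nu_{\lambda})$, the involutivity of $H_{\lambda}$, and treating $a=4$ separately) shows $\mathcal{F}_{\kappa,a}(f)$ cannot be rapidly decreasing unless $\frac{2}{a}\in\mathbb{N}$, which is the \emph{only if} part of (ii).

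For sufficiency of (i), assume $\frac{a}{2}\in\mathbb{N}$ and start from \eqref{eq26}. For $f\in\mathcal{S}(\mathbb{R})$ the profiles $g_{e}$ and $u^{-2/a}g_{o}$ lie in $\mathcal{S}(\mathbb{R}_{+})$ (with $u^{-2/a}g_{o}(u)\to(a/2)^{1/a}f'(0)$ as $u\to 0^{+}$), and the Hankel transform carries $\mathcal{S}(\mathbb{R}_{+})$ into the even part of $\mathcal{S}(\mathbb{R})$; hence $H_{\lambda}(g_{e})$ and $H_{\lambda+2/a}(u^{-2/a}g_{o})$ are even smooth functions on $\mathbb{R}$. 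Writing such an even smooth $h$ as $h(v)=\widetilde{h}(v^{2})$ with $\widetilde{h}$ smooth, we get $h\bigl((2/a)^{1/2}|y|^{a/2}\bigr)=\widetilde{h}\bigl((2/a)(y^{2})^{a/2}\bigr)$, which is smooth in $y$ precisely because $(y^{2})^{a/2}$ is a monomial when $\frac{a}{2}\in\mathbb{N}$; the odd term of \eqref{eq26} is $y$ times such a function, still smooth, so $\mathcal{F}_{\kappa,a}(\mathcal{S}(\mathbb{R}))\subset C^{\infty}(\mathbb{R})$. For sufficiency of (ii), assume $\frac{2}{a}\in\mathbb{N}$ and apply Proposition~\ref{lem9}: $\mathcal{F}_{\kappa,a}(f)(y)=F_{1}(|y|^{a/2})+yF_{2}(|y|^{a/2})$ with even $F_{1},F_{2}\in\mathcal{S}(\mathbb{R})$. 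Since $|y|^{a/2}\to\infty$ and each $F_{i}$ together with all its derivatives decays faster than any power, the chain rule gives that each term, and hence $\mathcal{F}_{\kappa,a}(f)$ with all its derivatives, decays faster than any power of $|y|$ at infinity.

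There is no real analytic obstacle: the content is in Propositions~\ref{lem6} and \ref{lem9} and the discussion around \eqref{eq26}, and the remaining proof is bookkeeping. The only points needing a little care are (a) confirming that Example~\ref{exa1} is being applied to an honestly even Schwartz function, so that the one-dimensional deformed-Hankel computations transfer verbatim, and (b) checking that the two elementary operations used on the sufficiency side — post-composition with $y\mapsto|y|^{a/2}$ and multiplication by the monomial $y$ — preserve smoothness when $\frac{a}{2}\in\mathbb{N}$ and preserve rapid decrease in all cases.
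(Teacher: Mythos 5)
Your proposal is essentially the paper's own proof: the two \emph{only if} directions are read off from the Gaussian computation of Example~\ref{exa1} (i.e.\ Proposition~\ref{lem6} with $d=1$), sufficiency in (ii) is Proposition~\ref{lem9}, and sufficiency in (i) comes from the representation \eqref{eq26} together with the observation that an even smooth function of $(2/a)^{1/2}|y|^{a/2}$ is a smooth function of $y$ when $\tfrac{a}{2}\in\mathbb{N}$. One small repair in the (i)-sufficiency step: for even $a\ge 6$ the profiles $g_e$ and $u^{-2/a}g_o$ are in general \emph{not} in $\mathcal{S}(\mathbb{R}_+)$ (they have the form $F\bigl(c\,u^{4/a}\bigr)$ with $4/a\notin\mathbb{N}$ and so need not be smooth at $u=0$), so rather than invoking Schwartz-invariance of the Hankel transform you should argue, as the paper does, that rapid decay of the profiles at infinity (plus their boundedness near the origin) already allows differentiation under the integral sign and yields that $H_{\lambda}(g_e)$ and $H_{\lambda+2/a}(u^{-2/a}g_o)$ are even $C^{\infty}$ functions, which is all the composition argument requires.
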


Let
\[
\mathcal{S}_{org}(\mathbb{R})=\{f\in\mathcal{S}(\mathbb{R})\colon \partial^{k}f(0)=0,\ k\in\mathbb{N}\}.
\]

The class $\mathcal{S}_{org}(\mathbb{R})$ is dense in  $L^{2}(\mathbb{R},d\mu_{\kappa,a})$.
Suppose  $a>0$, $f(x)\in\mathcal{S}_{org}(\mathbb{R})$, then the functions $g_e(u),u^{-2/a}g_o(u) \in\mathcal{S}(\mathbb{R}_+)$, cf. \eqref{eq26}. Therefore, the even functions  $H_{\lambda}(g_e)(v)$, $H_{\lambda+\frac{2}{a}}(u^{-2/a}g_o)(v)$ also belong  to $\mathcal{S}(\mathbb{R})$.

Thus, we obtain the following

\begin{proposition}
Suppose $a>0$, $\lambda\ge-1/2$, and $f\in\mathcal{S}_{org}(\mathbb{R})$; then the generalized Fourier transform  $\mathcal{F}_{\kappa,a}(f)$ enjoys the representation  \eqref{eq27}.
\end{proposition}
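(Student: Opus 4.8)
The plan is to reduce the statement to the one-dimensional Hankel transform picture already set up in \eqref{eq26}, and then invoke the known Schwartz-space behavior of the Hankel transform. First I would split an arbitrary $f\in\mathcal S_{org}(\mathbb R)$ into its even and odd parts $f_e,f_o$ and pass to the substituted functions $g(u)=Af(u)=f\bigl((a/2)^{1/a}u^{2/a}\bigr)$ together with $g_e=Af_e$, $g_o=Af_o$ as in \eqref{eq25}. The key observation is that the vanishing of all derivatives of $f$ at the origin forces $g_e(u)$ and $u^{-2/a}g_o(u)$ to extend to Schwartz functions on $\mathbb R_+$ (indeed, even Schwartz functions on $\mathbb R$): near $u=0$ the chain rule produces only powers $u^{2k/a}$ from $g$, but since $f$ vanishes to infinite order at $0$, each such term and all its derivatives vanish at $0$ regardless of whether $2/a$ is an integer; at infinity the rapid decay of $f$ is simply transported by the polynomial change of variables. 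This is the step that genuinely uses the hypothesis $f\in\mathcal S_{org}$ rather than merely $f\in\mathcal S$, and it is the only place where the restriction $\tfrac2a\in\mathbb N$ of Proposition~\ref{lem9} can be dropped.

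Next I would apply the representation \eqref{eq26}, which expresses $\mathcal F_{\kappa,a}(f)(y)$ as
\[
H_{\lambda}(g_e)\Bigl(\bigl(\tfrac2a\bigr)^{1/2}|y|^{a/2}\Bigr)
+e^{-\pi i/a}\bigl(\tfrac2a\bigr)^{1/a}y\,H_{\lambda+\frac2a}\bigl(u^{-2/a}g_o\bigr)\Bigl(\bigl(\tfrac2a\bigr)^{1/2}|y|^{a/2}\Bigr).
\]
By the previous paragraph both $g_e$ and $u^{-2/a}g_o$ lie in $\mathcal S(\mathbb R_+)$, so the standard fact (see \cite{Ro02}) that the Hankel transform $H_{\mu}$ maps $\mathcal S(\mathbb R_+)$ into even Schwartz functions on $\mathbb R$ applies to both terms. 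Setting $F_1(v)=H_{\lambda}(g_e)\bigl((2/a)^{1/2}v\bigr)$ and $F_2(v)=e^{-\pi i/a}(2/a)^{1/a}H_{\lambda+\frac2a}(u^{-2/a}g_o)\bigl((2/a)^{1/2}v\bigr)$ — both even, both Schwartz since an invertible linear rescaling of the variable preserves $\mathcal S(\mathbb R)$ — we obtain exactly $\mathcal F_{\kappa,a}(f)(y)=F_1(|y|^{a/2})+yF_2(|y|^{a/2})$, which is \eqref{eq27}.

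The main obstacle, and the point worth writing out carefully, is verifying that $u^{-2/a}g_o(u)$ is Schwartz at the origin: one must check that dividing the odd-part composition by $u^{2/a}$ still leaves a smooth (indeed infinitely flat) function there. The cleanest route is to note $g_o(u)=\tfrac12\bigl(f(x(u))-f(-x(u))\bigr)$ with $x(u)=(a/2)^{1/a}u^{2/a}$; since $f\in\mathcal S_{org}$, Taylor expansion of $f$ at $0$ shows $f(x)-f(-x)=o(x^N)$ for every $N$ as $x\to0$, hence $g_o(u)=o(u^{2N/a})$ and similarly for all derivatives, so multiplication by the singular factor $u^{-2/a}$ is harmless and in fact $u^{-2/a}g_o$ is again infinitely flat at $0$. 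An equivalent and perhaps shorter alternative is simply to cite the displayed computation preceding Proposition~\ref{lem9} — which already records that $g_e(u),u^{-2/a}g_o(u)\in\mathcal S(\mathbb R_+)$ whenever $f\in\mathcal S_{org}(\mathbb R)$ — and then only the Hankel-transform step and the rescaling remain, making the proof essentially a one-line invocation of \eqref{eq26} plus \cite{Ro02}.
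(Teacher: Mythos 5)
Your proposal is correct and follows essentially the same route as the paper: the paper's argument is precisely the paragraph preceding the statement, which passes to $g_e=Af_e$ and $u^{-2/a}g_o$ via \eqref{eq25}, observes that these lie in $\mathcal{S}(\mathbb{R}_+)$ because $f$ is infinitely flat at the origin, and then applies the Schwartz-invariance of the Hankel transforms in \eqref{eq26}. Your extra verification that the non-integer powers $u^{2/a}$ cause no trouble at $u=0$ is a welcome elaboration of a step the paper leaves implicit, but it is not a different method.
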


\subsection{The case of irrational $a$}
We will show that if  $a$ is irrational, any nontrivial Schwartz function possesses similar  properties to the Gaussian function
  in Example~\ref{exa1}. To see this, we need auxiliary  properties of the kernel of the generalized Fourier transform.

Let $\mathbb{S}^{d-1}$ be the unit sphere in $\mathbb{R}^{d}$, $x=\rho x'$, $\rho=|x|\in\mathbb{R}_+$, $x'\in\mathbb{S}^{d-1}$, and $dx'$ be the Lebesgue measure on the sphere. If $a_{\kappa}^{-1}=\int_{\mathbb{S}^{d-1}}v_{\kappa}(x')\,dx'$,\,\, $d\sigma_{\kappa}(x')=a_{\kappa}v_{\kappa}(x')\,dx'$, then $d\mu_{\kappa,a}(x)=d\nu_{\lambda_{\kappa},a}(\rho)\,d\sigma_{\kappa}(x')$ and $c_{\kappa,a}=b_{\kappa,a}a_{\kappa}$.

Denote by $\mathcal{H}_{n}^{d}(v_{\kappa})$ the subspace of $\kappa$-spherical harmonics
of degree $n\in \mathbb{Z}_{+}$ in $L^{2} (\mathbb{S}^{d-1},d\sigma_{\kappa})$ (see
\cite[Chap.~5]{DunXu01}). Let $\mathcal{P}_{n}^{d}$ be the space of homogeneous
polynomials of degree $n$ in $\mathbb{R}^{d}$. Then $\mathcal{H}_{n}^{d}(v_{\kappa})$ is the
restriction of $\ker \Delta_{\kappa}\cap \mathcal{P}_{n}^{d}$ to the sphere
$\mathbb{S}^{d-1}$.

If $l_{n}$ is the dimension of $\mathcal{H}_{n}^{d}(v_{\kappa})$, we denote by
$\{Y_{n}^{j}\colon j=1,\ldots,l_{n}\}$ the real-valued orthonormal basis
$\mathcal{H}_{n}^{d}(v_{\kappa})$ in $L^{2}(\mathbb{S}^{d-1},d\sigma_{\kappa})$.
 A union of these bases forms orthonormal basis in
$L^{2}(\mathbb{S}^{d-1},d\sigma_{\kappa})$ consisting of
$k$-spherical harmonics.

Let us rewrite \eqref{eq17} as follows
\[
B_{\kappa,a}(x,y)
=\sum_{j=0}^{\infty}e^{-\frac{i\pi j}{a}}\,\frac{\lambda_{\kappa}+j}{\lambda_{\kappa}}\,\frac{\Gamma(2\lambda_{\kappa}/a+1)(|x||y|)^{j}}{a^{2j/a}\,\Gamma(2(\lambda_{\kappa}+j)/a+1)}\,
j_{\frac{2(\lambda_{\kappa}+j)}{a}}\Bigl(\frac{2}{a}\,(|x||y|)^{a/2}\Bigr)V_{\kappa}C_{j}^{\lambda_{\kappa}}(\langle x',{\cdot}\,\rangle)(y').
\]
Integrating this and using orthogonality of $\kappa$-spherical harmonics, as in the case of Dunkl kernel
 (see \cite[Theorem 5.3.4]{DunXu01}, \cite[Corollary 2.5]{Ro03}), we obtain the following crucial
 property of the kernel of the generalized Fourier transform.

\begin{proposition}\label{lem7}
If $x,y\in\mathbb{R}^d$, $x=\rho x'$, $y=vy'$, then
\[
\int_{\mathbb{S}^{d-1}}B_{\kappa,a}(x,vy')Y_{n}^{j}(y')\,d\sigma_{\kappa}(y')=\frac{e^{-\frac{i\pi n}{a}}\Gamma(2\lambda_{\kappa}/a+1)}{a^{2n/a}
\,\Gamma(2(\lambda_{\kappa}+n)/a+1)}\,v^n
j_{\frac{2(\lambda_{\kappa}+n)}{a}}\Bigl(\frac{2}{a}\,(\rho v)^{a/2}\Bigr)Y_n^j(x).
\]
\end{proposition}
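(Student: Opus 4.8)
\textbf{Proof plan for Proposition~\ref{lem7}.}
The plan is to start from the series expansion of $B_{\kappa,a}(x,vy')$ written just before the statement, namely
\[
B_{\kappa,a}(x,vy')
=\sum_{m=0}^{\infty}e^{-\frac{i\pi m}{a}}\,\frac{\lambda_{\kappa}+m}{\lambda_{\kappa}}\,
\frac{\Gamma(2\lambda_{\kappa}/a+1)(\rho v)^{m}}{a^{2m/a}\,\Gamma(2(\lambda_{\kappa}+m)/a+1)}\,
j_{\frac{2(\lambda_{\kappa}+m)}{a}}\Bigl(\tfrac{2}{a}\,(\rho v)^{a/2}\Bigr)\,
V_{\kappa}C_{m}^{\lambda_{\kappa}}(\langle x',{\cdot}\,\rangle)(y'),
\]
and to integrate term by term against $Y_n^j(y')\,d\sigma_\kappa(y')$ over $\mathbb{S}^{d-1}$. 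The factors $e^{-i\pi m/a}$, $\rho^m v^m$, $j_{2(\lambda_\kappa+m)/a}(\cdot)$ and the Gamma quotient are independent of $y'$, so the whole problem reduces to computing the spherical integral
\[
\int_{\mathbb{S}^{d-1}} V_{\kappa}C_{m}^{\lambda_{\kappa}}(\langle x',{\cdot}\,\rangle)(y')\,Y_n^j(y')\,d\sigma_\kappa(y').
\]
The key input is the Funk--Hecke type formula for the Dunkl intertwining operator applied to Gegenbauer polynomials, which is exactly the ingredient used for the Dunkl kernel: by \cite[Theorem~5.3.4]{DunXu01} and \cite[Corollary~2.5]{Ro03}, this integral vanishes unless $m=n$, and for $m=n$ it evaluates (with the normalizations $d\sigma_\kappa$, $\lambda_\kappa=\langle\kappa\rangle+(d-2)/2$, and the basis $\{Y_n^j\}$ orthonormal in $L^2(\mathbb{S}^{d-1},d\sigma_\kappa)$) to a constant multiple of $Y_n^j(x')$, the constant being $\lambda_\kappa/(\lambda_\kappa+n)$ so as to cancel the factor $(\lambda_\kappa+n)/\lambda_\kappa$ in the series. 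Thus only the $m=n$ term survives.

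After this collapse, one is left with
\[
\int_{\mathbb{S}^{d-1}}B_{\kappa,a}(x,vy')Y_n^j(y')\,d\sigma_\kappa(y')
=e^{-\frac{i\pi n}{a}}\,\frac{\Gamma(2\lambda_{\kappa}/a+1)}{a^{2n/a}\,\Gamma(2(\lambda_{\kappa}+n)/a+1)}\,
v^n\,\rho^n\,j_{\frac{2(\lambda_{\kappa}+n)}{a}}\Bigl(\tfrac{2}{a}(\rho v)^{a/2}\Bigr)\,Y_n^j(x'),
\]
and the final step is the purely cosmetic identity $\rho^n Y_n^j(x')=Y_n^j(\rho x')=Y_n^j(x)$, valid because $Y_n^j$ is the restriction to the sphere of a homogeneous polynomial of degree $n$. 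This yields exactly the stated formula.

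The main technical point to be careful about is the \emph{interchange of summation and integration}: one should justify that the Gegenbauer series for $B_{\kappa,a}$ converges in a way (e.g.\ absolutely and uniformly on the compact sphere, using $|j_\nu|\le 1$, the polynomial bound $|\frac{\lambda_\kappa+m}{\lambda_\kappa}V_\kappa C_m^{\lambda_\kappa}(\langle x',\cdot\rangle)(y')|\le c(\lambda_\kappa)(m^{2\lambda_\kappa}+1)$ already used in Subsection~\ref{subsec4-2}, and the rapid decay of the Gamma quotient $\Gamma(2\lambda_\kappa/a+1)/\Gamma(2(\lambda_\kappa+m)/a+1)$) that term-by-term integration is legitimate; once that is in place the rest is the algebraic bookkeeping above and the invocation of the known Dunkl--Gegenbauer orthogonality relation. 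I expect the constant-matching in the Funk--Hecke step — getting the factor $\lambda_\kappa/(\lambda_\kappa+n)$ to cancel precisely against the coefficient in \eqref{eq17} — to be the only place requiring genuine care, and it is handled by citing the normalized Dunkl analogue of the classical Funk--Hecke formula rather than recomputing it.
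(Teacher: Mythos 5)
Your proposal is correct and follows essentially the same route as the paper: expand $B_{\kappa,a}(x,vy')$ in the Gegenbauer series from \eqref{eq17}, integrate term by term, and use the reproducing-kernel identity for $\frac{\lambda_\kappa+n}{\lambda_\kappa}V_\kappa C_n^{\lambda_\kappa}(\langle x',\cdot\rangle)$ together with orthogonality of the spaces $\mathcal{H}_m^d(v_\kappa)$ (via \cite[Theorem~5.3.4]{DunXu01}, \cite[Corollary~2.5]{Ro03}) so that only the $m=n$ term survives with the constants cancelling exactly as you describe. Your explicit attention to justifying the interchange of summation and integration is a detail the paper leaves implicit, but it is handled correctly by the bounds you cite.
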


Denote  by $\mathcal{S}(\mathbb{R}_+)$ the subspace of even functions from  $\mathcal{S}(\mathbb{R})$.

\begin{proposition}
For irrational  $a$ and a nontrivial function $f\in\mathcal{S}(\mathbb{R}^d)$, we have 
$\mathcal{F}_{\kappa,a}(f)\notin\mathcal{S}(\mathbb{R}^d)$.
\end{proposition}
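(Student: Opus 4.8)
The plan is to reduce the multivariate statement to a one-dimensional obstruction via the spherical-harmonic projection in Proposition~\ref{lem7}, and then to run an argument parallel to Example~\ref{exa1}. First I would take a nontrivial $f\in\mathcal{S}(\mathbb{R}^d)$ and pick $n\in\mathbb{Z}_+$ and $j\in\{1,\dots,l_n\}$ together with a radial profile so that the projection
\[
f_{n,j}(\rho)=\int_{\mathbb{S}^{d-1}}f(\rho x')Y_n^j(x')\,d\sigma_\kappa(x')
\]
is not identically zero; such $n,j$ exist because $f\not\equiv 0$ and the $\{Y_n^j\}$ form a basis of $L^2(\mathbb{S}^{d-1},d\sigma_\kappa)$. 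Writing $g=\mathcal{F}_{\kappa,a}(f)$ and assuming for contradiction that $g\in\mathcal{S}(\mathbb{R}^d)$, the projection $g_{n,j}$ is then a Schwartz function of $v=|y|$ (times $v^{\text{something}}$, to be tracked carefully).

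The key computation is to combine Proposition~\ref{lem7} with Fubini to obtain, after the substitution $\rho=(a/2)^{1/a}u^{2/a}$ turning $d\nu_{\lambda_\kappa,a}(\rho)$ into $d\nu_{\lambda}(u)$ with $\lambda=\lambda_{\kappa,a}+2n/a$ (the shift by $n$ coming from the $j_{2(\lambda_\kappa+n)/a}$ in Proposition~\ref{lem7}), an identity of the shape
\[
g_{n,j}(v)=\text{const}\cdot e^{-i\pi n/a}\,v^{n}\int_0^\infty h(u)\,j_{\lambda}\Bigl(\bigl(\tfrac{2}{a}\bigr)^{1/2}v^{a/2}u\Bigr)\,d\nu_{\lambda}(u)
= \text{const}\cdot v^n\,(H_{\lambda}h)\Bigl(\bigl(\tfrac{2}{a}\bigr)^{1/2}v^{a/2}\Bigr),
\]
where $h(u)=u^{-2n/a}\,f_{n,j}\bigl((a/2)^{1/a}u^{2/a}\bigr)$. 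Since $f\in\mathcal{S}(\mathbb{R}^d)$ and $f_{n,j}(\rho)=O(\rho^n)$ near $0$ by the harmonic projection, $h$ is smooth and rapidly decreasing on $[0,\infty)$; in fact $h\in\mathcal{S}(\mathbb{R}_+)$ when $2/a\in\mathbb{N}$, but for irrational $a$ the exponent $2/a$ is not an integer and the map $\rho\mapsto u^{2/a}$ destroys smoothness at the origin unless all derivatives of $f_{n,j}$ vanish there. Conversely, $g_{n,j}(v)/v^n$ being Schwartz and even-type forces $H_\lambda^{-1}=H_\lambda$ applied to it to be smooth at $0$, i.e. $h$ would have to be smooth, and then a Taylor expansion of $j_\lambda$ gives $g_{n,j}(v)=v^n(\xi_0+\xi_1 v^a+O(v^{2a}))$ with generically $\xi_1\neq 0$, so $g$ has a fractional-power singularity $|y|^a$ at the origin and is not $C^\infty$ — a contradiction. (If in the chosen projection $\xi_1=0$, one either selects a different $f$-independent perturbation within the same $\mathcal{H}_n^d(v_\kappa)$ or pushes the expansion to the first nonvanishing term, which is again a non-integer power of $|y|$ unless infinitely many Taylor coefficients of $h$ vanish.)

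The remaining case is that the obstruction is pushed off to infinity rather than the origin: if $h$ happens to be such that $g_{n,j}$ is smooth, then one argues as in Example~\ref{exa1} that $g_{n,j}$ cannot be rapidly decreasing, because applying $H_\lambda$ once more would force $h$ (equivalently $f_{n,j}$) to extend to a smooth even function of the non-integer-power variable, contradicting that $f_{n,j}$ is a genuine Schwartz function unless $f_{n,j}\equiv 0$. Either way one contradicts $f\not\equiv0$.

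The main obstacle I expect is bookkeeping rather than conceptual: tracking the precise powers of $u$ and $v$ and the index shift $\lambda\mapsto\lambda+2n/a$ so that the inversion $H_\lambda\circ H_\lambda=\mathrm{Id}$ can be invoked cleanly, and ruling out the degenerate possibility that \emph{every} relevant expansion coefficient vanishes (which would require showing that the vanishing of all Taylor coefficients of $h$ at $0$ forces $f_{n,j}$, hence $f$ itself, to lie in $\mathcal{S}_{org}$ on every ray — and then that the composition with $\rho\mapsto\rho^{2/a}$ is inconsistent with $f$ being Schwartz unless $f\equiv0$). Handling that degenerate case carefully, using irrationality of $a$ to guarantee that infinitely many fractional powers $|y|^{ka}$, $k\ge1$, are never integers, is the delicate point; everything else is a direct transcription of the Gaussian computation in Example~\ref{exa1} with $e^{-|x|^2}$ replaced by the radial profile of $f_{n,j}$.
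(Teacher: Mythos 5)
Your proposal follows essentially the same route as the paper: reduce to a single spherical-harmonic component $f_{nj}(\rho)Y_n^j(x')$ via Proposition~\ref{lem7} (together with the parity and vanishing-order observations that make $f_{nj}(\rho)/\rho^n$ an even Schwartz profile), and then run the Hankel-transform finite-smoothness/slow-decay contradiction of Example~\ref{exa1} on that radial profile, with exactly the index shift $\lambda_{\kappa}\mapsto\lambda_{\kappa}+n$ you describe. The one delicate point you flag --- the degenerate possibility that all the relevant fractional-power expansion coefficients vanish --- is not treated in the paper either, which simply invokes ``the ideas used in Example~\ref{exa1},'' so your sketch is at least as careful as the published argument.
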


\begin{proof} \textbf{1.} First, assume that  $f(x)=\rho^n\psi(\rho)Y_n^j(x')$, where $n\in\mathbb{Z}_+$, $x=\rho x'$, and $\psi\in\mathcal{S}(\mathbb{R}_+)$.
Since  $\rho^nY_n^j(x')=Y_n^j(x)$ is the homogeneous polynomial of degree $n$, then $f(x)=\psi(|x|)Y_n^j(x)\in\mathcal{S}(\mathbb{R}^d)$.
If $y=vy'$, $\rho=(a/2)^{1/a}u^{2/a}$, then by \cite{GIT16}
\begin{multline*}
\mathcal{F}_{\kappa,a}(f)(y)=e^{-\frac{i\pi n}{a}}Y_n^j(y)H_{\lambda_{\kappa}+n,a}(\psi)(v)
=e^{-\frac{i\pi n}{a}}Y_n^j(y)\int_{0}^{\infty}\psi(\rho)j_{\frac{2(\lambda_{\kappa}+n)}{a}}\Bigl(\frac{2}{a}\,(v\rho)^{a/2} \Bigr)\,d\nu_{\lambda_{\kappa},a}(\rho)\\
=e^{-\frac{i\pi n}{a}}Y_n^j(y)\int_{0}^{\infty}\psi\Bigl(\Bigl(\frac{a}{2}\Bigr)^{1/a}u^{2/a}\Bigr)
j_{\frac{2(\lambda_{\kappa}+n)}{a}}\Bigl(\Bigl(\frac{2}{a}\Bigr)^{1/2}\,v^{a/2}u \Bigr)\,d\nu_{\frac{2(\lambda_{\kappa}+n)}{a}}(u).
\end{multline*}
Moreover, following the ideas used in Example~\ref{exa1},
we obtain that the function
\[
g_a(v)=\int_{0}^{\infty}\psi\Bigl(\Bigl(\frac{a}{2}\Bigr)^{1/a}u^{2/a}\Bigr)j_{\frac{2(\lambda_{\kappa}+n)}{a}}\Bigl(\Bigl(\frac{2}{a}\Bigr)^{1/2}\,v^{a/2}u \Bigr)\,d\nu_{\frac{2(\lambda_{\kappa}+n)}{a}}(u)
\]
with irrational $a$ cannot decrease rapidly as $v\to\infty$ and it has finite smoothness at the origin.
Therefore, it follows that  
 $\mathcal{F}_{\kappa,a}(f)\notin \mathcal{S}(\mathbb{R}^d)$.

\smallbreak
\textbf{2.}
Let now $f\in\mathcal{S}(\mathbb{R}^d)$ be any non-zero function. Then
its spherical $\kappa$-harmonic expansion is given by
\[
f(\rho x')=\sum_{n=0}^{\infty}\sum_{j=1}^{l_{n}}f_{nj}(\rho)Y_{n}^{j}(x'),\quad f_{nj}(\rho)=\int_{\mathbb{S}^{d-1}}f(\rho x')Y_{n}^{j}(x')\,d\sigma_{\kappa}(x')
\]
(see \cite{GIT16}). Since the subspaces  $\mathcal{H}_n^d(v_{\kappa})$ are orthogonal, then $f_{nj}^{(j)}(0)=0$, $j=0,1,\dots,n-1$.
Changing variables  $x'\to -x'$ implies
$f_{nj}(-\rho)
=(-1)^nf_{nj}(\rho).
$
Hence,
setting a non-zero function $g_{nj}(f)(x)=f_{nj}(\rho)Y_{n}^{j}(x')\in\mathcal{S}(\mathbb{R}^d)$,
we have $g_{nj}(x)=\rho^n\psi(\rho)Y_{n}^{j}(x')$ with some $\psi\in\mathcal{S}(\mathbb{R}_+)$. 

 In order to apply the  results obtained in case 1, we need to show that
$$\mathcal{F}_{\kappa,a}(g_{nj}(f))(y)=g_{nj}(\mathcal{F}_{\kappa,a}(f))(y).$$ Indeed, we note that 
\[
\mathcal{F}_{\kappa,a}(g_{nj}(f))(y)=e^{-i\pi n/a}Y_n^j(y)H_{\lambda_{\kappa}+n,a}(\psi)(v).
\]
On the other hand, in view of  Proposition~\ref{lem7}, we deduce that 
\begin{align*}
g_{nj}(\mathcal{F}_{\kappa,a}(f))(y)&=Y_{n}^{j}(y')\int_{\mathbb{S}^{d-1}}\mathcal{F}_{\kappa,a}(f)(vy')Y_{n}^{j}(y')\,d\sigma_{\kappa,a}(y')\\
&=e^{-\frac{i\pi n}{a}}Y_{n}^{j}(y) \int_{\mathbb{R}^d}\frac{f(x)\Gamma(2\lambda_{\kappa}/a+1)}{a^{2n/a}
\,\Gamma(2(\lambda_{\kappa}+n)/a+1)}
j_{\frac{2(\lambda_{\kappa}+n)}{a}}\Bigl(\frac{2}{a}\,(\rho v)^{a/2}\Bigr)Y_n^j(x)\,d\mu_{\kappa,a}(x) \\
&=e^{-\frac{i\pi n}{a}}Y_{n}^{j}(y)\int_{0}^{\infty}\psi(\rho)
j_{\frac{2(\lambda_{\kappa}+n)}{a}}\Bigl(\frac{2}{a}\,(\rho v)^{a/2}\Bigr)\,d\nu_{\lambda_{\kappa}+n,a}(\rho)\\
&=e^{-\frac{i\pi n}{a}}Y_n^j(y)H_{\lambda_{\kappa}+n,a}(\psi)(v).
\end{align*}
Assuming here that  $f,\mathcal{F}_{\kappa,a}(f)\in\mathcal{S}(\mathbb{R}^d)$ yields  $g_{nj}(f),\mathcal{F}_{\kappa,a}(g_{nj}(f))\in\mathcal{S}(\mathbb{R}^d)$, which contradicts the first case.
\end{proof}

Summarizing, the generalized Fourier  transform for irrational $a$  drastically  deforms even very smooth functions.
It was mentioned
in  \cite[Chap. 5]{SKO12} that the generalized Fourier  transform has a finite order only for rational $a$.
 Therefore, the case of irrational $a$ is of little  interest in harmonic analysis.


\vspace{0.6mm}

\section{Non-deformed unitary transforms generated by $\mathcal{F}_{\kappa,a}$}\label{sec6}

Let us study the case  $a=\frac{2}{2r+1}$ and  $\lambda=\lambda_{\kappa,a}=(2\kappa-1)/a\ge -1/2$ in more detail.
Recall that $A$ is given by \eqref{eq25} and
we can also assume  that $x, u\in \mathbb{R}$.

Since
\[
\int_{-\infty}^{\infty}|f(x)|^2\,d\mu_{\kappa,a}(x)=\int_{-\infty}^{\infty}|Af(u)|^2\,d\widetilde{\nu}_{\lambda}(u),\qquad
d\widetilde{\nu}_{\lambda}(u)
=\frac{|u|^{2\lambda+1}\,du}{2^{\lambda+1} \Gamma(\lambda+1)},
\]
the linear operator $A\colon L^{2}(\mathbb{R},d\mu_{\kappa,a})\to L^{2}(\mathbb{R},d\widetilde{\nu}_{\lambda})$ is an isometric isomorphism. The inverse operator is given by
$A^{-1}g(x)=g((2r+1)^{1/2}\,x^{1/(2r+1)})$.

In view of \eqref{eq7} and \eqref{eq25}, $B_{\kappa,a}(x,y)=e_{2r+1}(uv,\lambda)$ and
\begin{equation*}
A\mathcal{F}_{\kappa,a}(f)(v)=\int_{\mathbb{R}}e_{2r+1}(uv,\lambda)Af(u)
\,d\widetilde{\nu}_{\lambda}(u).
\end{equation*}

This formula defines the \textit{non-deformed} transform $\mathcal{F}_{r}^{\lambda}$, for
 $\lambda> -1/2$ and $r\in\mathbb{Z}_+$,
\begin{align*}
\mathcal{F}_{r}^{\lambda}(g)(v)&=\int_{-\infty}^{\infty}e_{2r+1}(uv,\lambda)g(u)
\,d\widetilde{\nu}_{\lambda}(u)\\
&=\int_{-\infty}^{\infty}\Bigl(j_{\lambda}(uv)+i(-1)^{r+1}\,\frac{(uv)^{2r+1}}{2^{2r+1}(\lambda+1)_{2r+1}}\,
j_{\lambda+2r+1}(uv)\Bigr)g(u)
\,d\widetilde{\nu}_{\lambda}(u)\\
&=c_{\lambda}\int_{-\infty}^{\infty}\int_{-1}^{1}(1-t^2)^{\lambda-1/2}(1+P_{2r+1}^{(\lambda-1/2)}(t))\,e^{-iuvt}\,dt\,g(u)
\,d\widetilde{\nu}_{\lambda}(u).
\end{align*}
Moreover, its  kernel satisfies the estimate  $|e_{2r+1}(uv,\lambda)|\le M_{\lambda}<\infty$ and, importantly,   $M_{\lambda}=1$ for $\lambda\ge0$. If $r=0$, we recover the one-dimensional Dunkl transform.

Below we study  an invariant subspaces ($\subset C^{\infty}$) of the $\mathcal{F}_{r}^{\lambda}$ transform.
The Plancherel theorem for $\mathcal{F}_{\kappa,a}$ given by
\[
\int_{-\infty}^{\infty}|\mathcal{F}_{\kappa,a}(f)(y)|^2\,d\mu_{\kappa,a}(y)=\int_{-\infty}^{\infty}|f(x)|^2\,d\mu_{\kappa,a}(x), \quad f\in L^{2}(\mathbb{R},d\mu_{\kappa,a}),
\]
implies
that $\mathcal{F}_{r}^{\lambda}$ is  a unitary operator in $L^{2}(\mathbb{R},d\widetilde{\nu}_{\lambda})$, i.e.,
\[
\int_{-\infty}^{\infty}|\mathcal{F}_{r}^{\lambda}(g)(v)|^2\,d\widetilde{\nu}_{\lambda}(v)
=\int_{-\infty}^{\infty}|g(u)|^2\,d\widetilde{\nu}_{\lambda}(u).
\]
Since  the reverse operator satisfies $(\mathcal{F}_{\kappa,a})^{-1}(f)(x)=\mathcal{F}_{\kappa,a}(f)(-x)$ \cite[Theorem 5.3]{SKO12},
we have \begin{equation*}
(\mathcal{F}_{r}^{\lambda})^{-1}(f)(u)=\int_{-\infty}^{\infty}\overline{e_{2r+1}(uv,\lambda)}f(v)
\,d\widetilde{\nu}_{\lambda}(v).
\end{equation*}
If $g, \mathcal{F}_{r}^{\lambda}(g)\in L^{1}(\mathbb{R},d\widetilde{\nu}_{\lambda})$, then one may assume that  $g, \mathcal{F}_{r}^{\lambda}(g)\in C_b(\mathbb{R})$. Moreover,
the   inversion formula
\begin{equation}\label{eq28}
g(u)=\int_{-\infty}^{\infty}\overline{e_{2r+1}(uv,\lambda)}\mathcal{F}_{r}^{\lambda}(g)(v)\,d\widetilde{\nu}_{\lambda}(v)
\end{equation}
holds not only in $L_2$ sense
but also pointwise.

Considering the derivatives of the kernel $e_{2r+1}(uv,\lambda)$, we note that
\[
\partial_{v}^{n}e_{2r+1}(uv,\lambda)=(iu)^{n}c_{\lambda}\int_{-1}^{1}t^{n}
(1-t^2)^{\lambda-1/2}(1+P_{2r+1}^{(\lambda-1/2)}(t))\,e^{-iuvt}\,dt,
\]
and so,
\[
|\partial_{v}^{n}e_{2r+1}(uv,\lambda)|\le M_{\lambda}|u|^{n}.
\]
Then, for $g\in \mathcal{S}(\mathbb{R})$, we have
\begin{equation}\label{eq29}
|\partial^{n}\mathcal{F}_{r}^{\lambda}(g)(v)|=\Bigl|\int_{-\infty}^{\infty}g(u)\partial_{v}^{n}e_{2r+1}(uv,\lambda)
\,d\widetilde{\nu}_{\lambda}(u)\Bigr|\le M_{\lambda}\int_{-\infty}^{\infty}|u|^{n}|g(u)|\,d\widetilde{\nu}_{\lambda}(u)<\infty.
\end{equation}
Therefore, $\mathcal{F}_{r}^{\lambda}(\mathcal{S}(\mathbb{R}))\subset C^{\infty}(\mathbb{R})$. However, $\mathcal{F}_{r}^{\lambda}(\mathcal{S}(\mathbb{R}))\not\subset \mathcal{S}(\mathbb{R})$. Indeed, assuming $g,\mathcal{F}_{r}^{\lambda}(g)\in \mathcal{S}(\mathbb{R})$, by
orthogonality of the Gegenbauer polynomials for $s=0,1,\dots,r-1$,
\[
\int_{-1}^{1}t^{2s+1}
(1-t^2)^{\lambda-1/2}(1\pm P_{2r+1}^{(\lambda-1/2)}(t,\lambda))\,dt=0
\]
and
\begin{equation}\label{eq30}
\partial^{2s+1}\mathcal{F}_{r}^{\lambda}(g)(0)=0,\quad
\partial^{2s+1}g(0)=0,
\end{equation}
which is not true for arbitrary $g$.

Put for $n\in\mathbb{Z}_+$
\[
\mathcal{S}_{n}(\mathbb{R})=\{g\in\mathcal{S}(\mathbb{R})\colon \partial^{2s+1}g(0)=0,\quad s=0,1,\dots,n-1\},\quad \mathcal{S}_{0}(\mathbb{R})=\mathcal{S}(\mathbb{R}).
\]
The set  $\mathcal{S}_{n}(\mathbb{R})$ is dense in $L^{2}(\mathbb{R},d\mu_{\kappa,a})$ and in  $L^{2}(\mathbb{R},d\widetilde{\nu}_{\lambda})$.

\begin{example}
Consider the function
$g_{2s+1}(u)=u^{2s+1}e^{-u^2}\in\mathcal{S}_{s}(\mathbb{R})$,
$s\in\mathbb{Z}_+$. By means of \eqref{eq4}, \eqref{eq7}, \cite[Chap.~VIII, 8.6(14)]{Ba54},
and \cite[Chap.~VI, 6.1]{BE53a}, we get
\begin{align*}
\mathcal{F}_{r}^{\lambda}(g_{2s+1})(v)&=i(-1)^{r+1}v^{-\lambda}\int_{0}^{\infty}u^{\lambda+2s+2}e^{-u^2}J_{\lambda+2r+1}(uv)\,du\\
&=ic_{r,\lambda,s}\,v^{2r+1}\sum_{l=0}^{\infty}\frac{(\lambda+s+r+2)_l}{(\lambda+2r+2)_l}\Bigl(-\frac{v^2}{4}\Bigr)^{l}\\
&=ic_{r,\lambda,s}\,v^{2r+1}\Phi\Bigl(\lambda+s+r+2,\lambda+2r+2,-\frac{v^2}{4}\Bigr),\quad c_{r,\lambda,s}>0.
\end{align*}
We consider two cases. If $s=0,1,\dots,r-1$, then asymptotics as
$v\to\infty$ \cite[Chap.~VI,6.13.1]{BE53} implies
\[
\Phi\Bigl(\lambda+s+r+2,\lambda+2r+2,-\frac{v^2}{4}\Bigr)=\frac{\Gamma(\lambda+2r+2)}
{\Gamma(r-s)}\Bigl(\frac{v^2}{4}\Bigr)^{-(\lambda+s+r+2)}\Bigl(1+O\Bigl(\frac{1}{v^2}\Bigr)\Bigr),
\]
that is, $\mathcal{F}_{r}^{\lambda}(g_{2s+1})\notin\mathcal{S}(\mathbb{R})$.
If $s\ge r$, then applying the Kummer transform \cite[Chap.~VI,6.3.7)]{BE53} gives us
\begin{align}\label{eq31}
\mathcal{F}_{r}^{\lambda}(g_{2s+1})(v)&=ic_{r,\lambda,s}\,v^{2r+1}e^{-v^2/4}\Phi\Bigl(r-s,\lambda+2r+2,\frac{v^2}{4}\Bigr)\nonumber
\\
&=ic_{r,\lambda,s}\,v^{2r+1}e^{-v^2/4}
\sum_{l=0}^{s-r}\frac{(r-s)_l}{(\lambda+2r+2)_l}\Bigl(\frac{v^2}{4}\Bigr)^{l},
\end{align}
that is,
$\mathcal{F}_{r}^{\lambda}(g_{2s+1})(v)\in \mathcal{S}_{r}(\mathbb{R})$.
\end{example}

Since $g_{2r+1}\in \mathcal{S}_{r}(\mathbb{R})$ and $\mathcal{F}_{r}^{\lambda}(g_{2r+1})(v)\in \mathcal{S}_{r}(\mathbb{R})$,
one can conjecture that 
$\mathcal{F}_{r}^{\lambda}(\mathcal{S}_{r}(\mathbb{R}))=
\mathcal{S}_{r}(\mathbb{R})$.
In order to show this (see Proposition \ref{lem11}), we will need some auxiliary results.

Recall that for the weight function $|x|^{2\lambda+1}$
the differential-difference
Dunkl operator of the first and second order are given by
\[
T_{\lambda+1/2}g(u)=\partial g(u)+(\lambda+1/2)\,\frac{g(u)-g(-u)}{u},
\]
\[
\Delta_{\lambda+1/2}g(u)=T_{\lambda+1/2}^2g(u)=\partial^{2}
g(u)+\frac{2\lambda+1}{u}\,\partial g(u)-(\lambda+1/2)\,\frac{g(u)-g(-u)}{u^2}
\]
(see \cite{SKO12,Ro02}). Let us define the operator
\[
\delta_{\lambda}g(u)=T_{\lambda+1/2}^2g(u)-2r(\lambda+r+1)\,\frac{g(u)-g(-u)}{u^2},
\]
which  is obtained by changing  variables $x=x(u)$ as in \eqref{eq25} in the Dunkl Laplacian
\[
\Delta_{\kappa}f(x)=\frac{(2r+1)^{2r-1}}{u^{4r}}\,\delta_{\lambda}g(u).
\]
By direct calculations we verify that the kernel $e_{2r+1}(uv,\lambda)$ is the eigenfunction of $\delta_{\lambda}$:
\begin{equation}\label{eq32}
(\delta_{\lambda})_ue_{2r+1}(uv,\lambda)=-|v|^2e_{2r+1}(uv,\lambda).
\end{equation}
Using \cite[Proposition~2.18]{Ro02} for $g\in \mathcal{S}(\mathbb{R})$, we have
\begin{equation}\label{eq33}
\int_{-\infty}^{\infty}T_{\lambda+1/2}^2g(u)e_{2r+1}(uv,\lambda)\,d\widetilde{\nu}_{\lambda}(u)=
\int_{-\infty}^{\infty}g(u)(T_{\lambda+1/2}^2)_ue_{2r+1}(uv,\lambda)\,d\widetilde{\nu}_{\lambda}(u).
\end{equation}

Suppose $g\in \mathcal{S}_1(\mathbb{R})$, then
\[
\frac{g(u)-g(-u)}{u^2}=\frac{2g_{o}(u)}{u^2}\in \mathcal{S}(\mathbb{R})
\]
and
\[
\int_{-\infty}^{\infty}\frac{g(u)-g(-u)}{u^2}\,e_{2r+1}(uv,\lambda)\,d\widetilde{\nu}_{\lambda}(u)=
\int_{-\infty}^{\infty}g(u)\,\frac{e_{2r+1}(uv,\lambda)-e_{2r+1}(-uv,\lambda)}{u^2}\,d\widetilde{\nu}_{\lambda}(u).
\]
This, \eqref{eq32} and \eqref{eq33} for any $g\in \mathcal{S}_1(\mathbb{R})$ yield
\begin{align}\label{eq34}
\int_{-\infty}^{\infty}\delta_{\lambda}g(u)e_{2r+1}(uv,\lambda)\,d\widetilde{\nu}_{\lambda}(u)&=
\int_{-\infty}^{\infty}g(u)(\delta_{\lambda})_ue_{2r+1}(uv,\lambda)\,d\widetilde{\nu}_{\lambda}(u)
\nonumber
\\
&=-|v|^2\int_{-\infty}^{\infty}g(u)e_{2r+1}(uv,\lambda)\,d\widetilde{\nu}_{\lambda}(u).
\end{align}
Applying \eqref{eq34} for $g\in \mathcal{S}_n(\mathbb{R})$, we get
\begin{equation}\label{eq35}
\int_{-\infty}^{\infty}\delta_{\lambda}^ng(u)e_{2r+1}(uv,\lambda)\,d\widetilde{\nu}_{\lambda}(u)=
(-1)^n|v|^{2n}\int_{-\infty}^{\infty}g(u)e_{2r+1}(uv,\lambda)\,d\widetilde{\nu}_{\lambda}(u).
\end{equation}

\begin{lemma}\label{lem10}
Suppose $g\in \mathcal{S}(\mathbb{R})$ and
\[
a_n(g)(v)=\sum_{k=0}^n\sum_{l=0}^k\frac{1}{(k-l)!}\,\frac{\partial^{2l+1}g(0)}{(2l+1)!}\,v^{2k+1}e^{-v^2},
\]
then $g-a_n(g)\in\mathcal{S}_{n+1}(\mathbb{R})$.
\end{lemma}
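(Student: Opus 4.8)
The plan is to reduce the statement to a simple identity about Taylor coefficients. First note that $a_n(g)$ is a polynomial times the Gaussian $e^{-v^2}$, hence $a_n(g)\in\mathcal{S}(\mathbb{R})$ and therefore $g-a_n(g)\in\mathcal{S}(\mathbb{R})$; by definition of $\mathcal{S}_{n+1}(\mathbb{R})$ it then remains only to check that $\partial^{2s+1}(g-a_n(g))(0)=0$ for $s=0,1,\dots,n$. Moreover $a_n(g)$ is an \emph{odd} function of $v$ (each summand is $v^{2k+1}e^{-v^2}$), so all its even-order derivatives at the origin already vanish, and we are reduced to matching the odd-order Taylor coefficients of $a_n(g)$ and of $g$ up to order $2n+1$.

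To do this I would set $c_l=\frac{\partial^{2l+1}g(0)}{(2l+1)!}$ and $d_k=\sum_{l=0}^{k}\frac{c_l}{(k-l)!}$, so that $a_n(g)(v)=Q_n(v)\,e^{-v^2}$ with $Q_n(v)=\sum_{k=0}^{n}d_k v^{2k+1}$. The key observation is that $Q_n$ is exactly the truncation to degree $\le 2n+1$ of the power series $P(v)e^{v^2}$, where $P(v)=\sum_{l=0}^{n}c_l v^{2l+1}$ is the odd part of the degree-$(2n+1)$ Taylor polynomial of $g$: indeed, since $e^{v^2}=\sum_{m\ge 0}\frac{v^{2m}}{m!}$, the coefficient of $v^{2k+1}$ in $P(v)e^{v^2}$ is $\sum_{l=0}^{\min(k,n)}\frac{c_l}{(k-l)!}$, which equals $d_k$ for $k\le n$. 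Now, because $e^{-v^2}$ has no negative powers of $v$, the Taylor coefficients of $R(v)e^{-v^2}$ of degree $\le N$ depend only on the coefficients of $R$ of degree $\le N$; applying this with $N=2n+1$ to $R=Q_n$ and to $R=P(v)e^{v^2}$ — which agree through degree $2n+1$ — shows that $a_n(g)(v)$ and $P(v)e^{v^2}e^{-v^2}=P(v)$ have the same Taylor coefficients through degree $2n+1$. Concretely, this collapse is the elementary binomial identity
\[
\sum_{j=0}^{p}\frac{(-1)^{p-j}}{(p-j)!\,j!}=\frac{(1-1)^{p}}{p!}=\delta_{p,0},
\]
which one uses after interchanging the order of summation in the double sum defining the coefficient of $v^{2s+1}$ in $Q_n(v)e^{-v^2}$.

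Since $\deg P\le 2n+1$, it follows that $\partial^{2s+1}a_n(g)(0)=(2s+1)!\,c_s=\partial^{2s+1}g(0)$ for $s=0,1,\dots,n$, and hence $g-a_n(g)\in\mathcal{S}_{n+1}(\mathbb{R})$. The argument is entirely routine; the only point requiring care is the combinatorial bookkeeping in the middle step (equivalently, keeping track of the trivial generating-function cancellation $e^{v^2}\cdot e^{-v^2}=1$ that lies behind it), after which the conclusion is immediate.
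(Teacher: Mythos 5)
Your proof is correct and is essentially the paper's argument in generating-function clothing: the paper verifies $\partial^{2s+1}a_n(g)(0)=\partial^{2s+1}g(0)$ for $s=0,\dots,n$ by a direct Leibniz-rule computation that collapses via the very same binomial identity $\sum_{m=0}^{p}\frac{(-1)^{m}}{m!\,(p-m)!}=\delta_{p,0}$ you invoke, your truncation-of-$P(v)e^{v^2}$ observation being just a repackaging of that interchange of summation. Both arguments also correctly use that membership in $\mathcal{S}_{n+1}(\mathbb{R})$ only constrains odd-order derivatives at the origin, so nothing further is needed.
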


\begin{proof} We have $g-a_n(g)\in\mathcal{S}(\mathbb{R})$.
Applying
the  Leibniz rule
for $s=0,1,\dots,n$,
\begin{align*}
\frac{\partial^{2s+1}a_n(g)(0)}{(2s+1)!}&=\frac{1}{(2s+1)!}\sum_{k=0}^s\sum_{l=0}^k\frac{\partial^{2l+1}g(0)}{(k-l)!\,(2l+1)!}
\binom{2s+1}{2k+1}(2k+1)!\partial^{2s-2k}(e^{-u^2})(0)\\
&=\sum_{k=0}^s\sum_{l=0}^k\frac{\partial^{2l+1}g(0)(-1)^{s+k}}{(k-l)!\,(2l+1)!\,(s-k)!}
=
\sum_{l=0}^s\frac{\partial^{2l+1}g(0)}{(2l+1)!}\sum_{m=0}^{s-l}\frac{(-1)^{s+l+m}}{m!\,(s-l-m)!}
\\
&=\frac{1}{(s-l)!}\sum_{l=0}^s\frac{(-1)^{s+l}\partial^{2l+1}g(0)}{(s-l)!\,(2l+1)!}(1-1)^{s-l}=\frac{\partial^{2s+1}g(0)}{(2s+1)!}.
\end{align*}
\end{proof}

\begin{proposition}\label{lem11} Let
$\lambda>-1/2$ and $r\in\mathbb{Z}_+$.
We have  $\mathcal{F}_{r}^{\lambda}(\mathcal{S}_{r}(\mathbb{R}))=\mathcal{S}_{r}(\mathbb{R})$.
\end{proposition}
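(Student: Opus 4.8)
The plan is to derive the full statement from the single inclusion $\mathcal{F}_{r}^{\lambda}(\mathcal{S}_{r}(\mathbb{R}))\subseteq\mathcal{S}_{r}(\mathbb{R})$, using that $\mathcal{F}_{r}^{\lambda}$ is, up to a reflection, its own inverse. Since the Bessel functions in \eqref{eq7} are real and even while $(uv)^{2r+1}$ is odd, one has $\overline{e_{2r+1}(uv,\lambda)}=e_{2r+1}(-uv,\lambda)$, so the inversion formula \eqref{eq28} amounts to $(\mathcal{F}_{r}^{\lambda})^{-1}=R\circ\mathcal{F}_{r}^{\lambda}$, where $Rh(u)=h(-u)$. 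The reflection $R$ visibly preserves $\mathcal{S}_{r}(\mathbb{R})$, so once the inclusion is known,
\[
\mathcal{S}_{r}(\mathbb{R})=\mathcal{F}_{r}^{\lambda}\bigl((\mathcal{F}_{r}^{\lambda})^{-1}\mathcal{S}_{r}(\mathbb{R})\bigr)=\mathcal{F}_{r}^{\lambda}\bigl(R\,\mathcal{F}_{r}^{\lambda}(\mathcal{S}_{r}(\mathbb{R}))\bigr)\subseteq\mathcal{F}_{r}^{\lambda}(\mathcal{S}_{r}(\mathbb{R})),
\]
which together with the inclusion yields equality.

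For the inclusion I would first rewrite $\mathcal{F}_{r}^{\lambda}$ on $\mathcal{S}_{r}(\mathbb{R})$ in terms of the two Hankel transforms $H_{\lambda}$ and $H_{\lambda+2r+1}$. Given $g\in\mathcal{S}_{r}(\mathbb{R})$, split $g=g_{e}+g_{o}$ and substitute the expansion \eqref{eq7} of $e_{2r+1}(uv,\lambda)$ into the defining integral: by parity $j_{\lambda}(uv)$ pairs only with $g_{e}$ and $(uv)^{2r+1}j_{\lambda+2r+1}(uv)$ only with $g_{o}$, and after matching the normalizing constants (using $b_{\lambda}(2^{2r+1}(\lambda+1)_{2r+1})^{-1}=b_{\lambda+2r+1}$) this gives
\[
\mathcal{F}_{r}^{\lambda}(g)(v)=H_{\lambda}(g_{e})(|v|)+i(-1)^{r+1}v^{2r+1}H_{\lambda+2r+1}\bigl(u^{-(2r+1)}g_{o}\bigr)(|v|).
\]
Here $g_{e}$ is an even element of $\mathcal{S}(\mathbb{R})$, and, this being the one place where the hypothesis is used, since $g\in\mathcal{S}_{r}(\mathbb{R})$ the odd Schwartz function $g_{o}$ vanishes at $0$ to order $2r+1$, so $u^{-(2r+1)}g_{o}(u)$ extends to an even function of $\mathcal{S}(\mathbb{R})$.

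To finish I would invoke the mapping property of the Hankel transform already used before Proposition~\ref{lem9} (see \cite{Ro02}): $H_{\eta}$ carries even Schwartz functions to even Schwartz functions. Thus $H_{\lambda}(g_{e})(|v|)=H_{\lambda}(g_{e})(v)$ is even and lies in $\mathcal{S}(\mathbb{R})$, while $\Psi:=H_{\lambda+2r+1}(u^{-(2r+1)}g_{o})$ is even and in $\mathcal{S}(\mathbb{R})$, so $v^{2r+1}\Psi(v)\in\mathcal{S}(\mathbb{R})$ has Taylor expansion at the origin starting at order $2r+1$. Adding the two terms, $\mathcal{F}_{r}^{\lambda}(g)\in\mathcal{S}(\mathbb{R})$ and $\partial^{2s+1}\mathcal{F}_{r}^{\lambda}(g)(0)=0$ for $s=0,\dots,r-1$, i.e.\ $\mathcal{F}_{r}^{\lambda}(g)\in\mathcal{S}_{r}(\mathbb{R})$. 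An alternative route to the inclusion, which is where Lemma~\ref{lem10} and formula \eqref{eq31} enter, keeps smoothness from \eqref{eq29} and vanishing of the odd derivatives at $0$ from Gegenbauer orthogonality (as in \eqref{eq30}), and obtains the rapid decay of $\mathcal{F}_{r}^{\lambda}(g)$ and its derivatives by writing $g=a_{N}(g)+(g-a_{N}(g))$, evaluating $\mathcal{F}_{r}^{\lambda}(a_{N}(g))$ through \eqref{eq31} and bounding $\mathcal{F}_{r}^{\lambda}(g-a_{N}(g))$ via the order-$2(N+1)$ decay from \eqref{eq35}, then sending $N\to\infty$.

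The hard part is the second step: turning the two-Bessel kernel $e_{2r+1}$ into a genuine sum of two Hankel transforms requires the parity bookkeeping together with the exact cancellation of the $\Gamma$-factors so that the orders $\lambda$ and $\lambda+2r+1$ come out right, and it is precisely the condition $g\in\mathcal{S}_{r}(\mathbb{R})$ that makes $u^{-(2r+1)}g_{o}$ smooth at $0$. This simultaneously explains why $\mathcal{F}_{r}^{\lambda}$ fails to preserve all of $\mathcal{S}(\mathbb{R})$ (cf.\ \eqref{eq30}): for a general $g$ the odd part need not be divisible by $u^{2r+1}$.
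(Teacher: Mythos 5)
Your proposal is correct. Your main route — splitting $g=g_e+g_o$, pairing the even Bessel part of $e_{2r+1}$ with $g_e$ and the odd part with $g_o$, checking that the $\Gamma$-factors recombine into $b_{\lambda+2r+1}$, and observing that $g\in\mathcal{S}_r(\mathbb{R})$ is exactly what makes $u^{-(2r+1)}g_o$ an even Schwartz function so that the known mapping property of the Hankel transforms $H_\lambda$ and $H_{\lambda+2r+1}$ applies — is precisely the argument the paper gives as an \emph{alternative} proof in the remark following the proposition. The paper's official proof is the one you sketch secondarily: it establishes smoothness from \eqref{eq29}, the vanishing of odd derivatives at $0$ from \eqref{eq30}, and rapid decay by the decomposition $g=a_{n-1}(g)+(g-a_{n-1}(g))$ of Lemma~\ref{lem10}, handling the explicit piece via \eqref{eq31} and the remainder via the eigenfunction identity \eqref{eq35} (the authors state they chose this longer route to highlight the structural properties of $\mathcal{F}_r^\lambda$ and of $\delta_\lambda$). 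Two small points in your favour: you make explicit the surjectivity step via $(\mathcal{F}_r^\lambda)^{-1}=R\circ\mathcal{F}_r^\lambda$ and the $R$-invariance of $\mathcal{S}_r(\mathbb{R})$, which the paper's proof leaves implicit (it only proves the inclusion $\mathcal{F}_r^\lambda(\mathcal{S}_r(\mathbb{R}))\subseteq\mathcal{S}_r(\mathbb{R})$ and asserts equality); and your closing observation that the failure of $u^{2r+1}\mid g_o$ for general $g$ is what obstructs invariance of all of $\mathcal{S}(\mathbb{R})$ matches the paper's discussion around \eqref{eq30}.
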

If $r=0$ we recover the result by de Jeu \cite{jeu} for the Dunkl transform.

\begin{proof} Let $r\in\mathbb{N}$ and $g\in \mathcal{S}_{r}(\mathbb{R})$. It is enough to show that  $g$ satisfies the condition:
\begin{equation}\label{eq36}
\partial^{m}(v^{2n}\mathcal{F}_{r}^{\lambda}(g)(v))\ \text{is bounded for any}\ m\in\mathbb{Z}_+,\ n\ge r+1.
\end{equation}
We have
\[
\partial^{m}(v^{2n}\mathcal{F}_{r}^{\lambda}(g)(v))=\partial^{m}(v^{2n}\mathcal{F}_{r}^{\lambda}(g-a_{n-1}(g))(v))
+\partial^{m}(v^{2n}\mathcal{F}_{r}^{\lambda}(a_{n-1}(g))(v)).
\]
Since  $\partial^{2l+1}g(0)=0$, $l=0,1,\dots,r-1$, then
\[
a_{n-1}(g)(v)=\sum_{k=r}^{n-1}\sum_{l=r}^k\frac{1}{(k-l)!}\,
\frac{\partial^{2l+1}g(0)}{(2l+1)!}\,v^{2k+1}e^{-v^2}\in\mathcal{S}_{r}(\mathbb{R}).
\]
By  \eqref{eq31}, condition  \eqref{eq36} is valid for $a_{n-1}(g)$. By Lemma \ref{lem10}, $g-a_{n-1}(g)\in \mathcal{S}_n(\mathbb{R})$.
In light of~\eqref{eq35}, $v^{2n}\mathcal{F}_{r}^{\lambda}(g-a_{n-1}(g))(v)$ is the  $\mathcal{F}_{r}^{\lambda}$-transform of the function
$(-1)^n\delta_{\lambda}^n(g-a_{n-1}(g))\in \mathcal{S}(\mathbb{R})$. Applying for this transform inequality \eqref{eq29}, we get the property \eqref{eq36} for  $g-a_{n-1}(g)$. Therefore, $\mathcal{F}_{r}^{\lambda}(g)\in\mathcal{S}(\mathbb{R})$. By virtue of \eqref{eq28} and \eqref{eq30}, $\mathcal{F}_{r}^{\lambda}(g)\in\mathcal{S}_{r}(\mathbb{R})$.
\end{proof}

\begin{remark}
An alternative proof of Proposition  \ref{lem11} reads as follows.
Let $g\in\mathcal{S}_{r}(\mathbb{R})$. In light of \eqref{eq24}--\eqref{eq26}, we have the following representation
\begin{align*}
\mathcal{F}_{r}^{\lambda}(g)(v)&=\int_{-\infty}^{\infty}j_{\lambda}(uv)g(u)\,d\widetilde{\nu}_{\lambda}(u)+
\frac{i(-1)^{r+1}v^{2r+1}}{2^{2r+1}(\lambda+1)_{2r+1}}\int_{-\infty}^{\infty}u^{2r+1}j_{\lambda+2r+1}(uv)g(u)\,d\widetilde{\nu}_{\lambda}(u)\\
&=\int_{0}^{\infty}j_{\lambda}(uv)g_{e}(u)\,d\nu_{\lambda}(u)+i(-1)^{r+1}v^{2r+1}\int_{0}^{\infty}j_{\lambda+2r+1}(uv)u^{-(2r+1)}g_{o}(u)
\,d\nu_{\lambda+2r+1}(u)\\
&=H_{\lambda}(g_e)(v)++i(-1)^{r+1}v^{2r+1}H_{\lambda+2r+1}(u^{-(2r+1)}g_o)(v)
=F_1(v)+v^{2r+1}F_2(v),
\end{align*}
where even functions $F_1,F_2\in\mathcal{S}(\mathbb{R})$. Therefore, $\mathcal{F}_{r}^{\lambda}(g)\in\mathcal{S}_{r}(\mathbb{R})$. 
The first  proof was given to underline the important properties of $\mathcal{F}_{r}^{\lambda}$-transform and its kernel.
\end{remark}

Since $f\in\mathcal{S}(\mathbb{R})$ implies $Af\in \mathcal{S}_r(\mathbb{R})$, Proposition \ref{lem11} yields the following result.

\begin{corollary}
Let $a=\frac{2}{2r+1}$ and $\kappa\ge\frac{r}{2r+1}$. 
If $f\in\mathcal{S}(\mathbb{R})$, then $\mathcal{F}_{\kappa,a}(f)((2r+1)^{-(r+1/2)} v^{2r+1})\in\mathcal{S}_r(\mathbb{R})$ or, equivalently,
$\mathcal{F}_{\kappa,a}(f)(v)=g((2r+1)^{1/2}\,v^{1/(2r+1)}),$ $g\in\mathcal{S}_r(\mathbb{R})$; cf. \eqref{eq27}.
\end{corollary}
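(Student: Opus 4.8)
The plan is to transfer the statement to Proposition~\ref{lem11} via the isometric change of variables $A$ from \eqref{eq25}. For $a=\frac{2}{2r+1}$ one has $\frac{1}{a}=r+\frac{1}{2}$ and $\frac{2}{a}=2r+1$, so that $Af(u)=f\bigl((2r+1)^{-(r+1/2)}u^{2r+1}\bigr)$ and, as recorded in Section~\ref{sec6}, $A\mathcal{F}_{\kappa,a}(f)=\mathcal{F}_{r}^{\lambda}(Af)$ with $\lambda=\lambda_{\kappa,a}=\frac{(2\kappa-1)(2r+1)}{2}$. A one-line computation shows that the hypothesis $\kappa\ge\frac{r}{2r+1}$ is equivalent to $\lambda\ge-1/2$, the range in which Proposition~\ref{lem11} is available.

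The only substantive point to check is the implication $f\in\mathcal{S}(\mathbb{R})\Rightarrow Af\in\mathcal{S}_r(\mathbb{R})$. Smoothness of $Af$ is clear since it is the composition of $f$ with the monomial $u\mapsto(2r+1)^{-(r+1/2)}u^{2r+1}$; moreover each $\partial^{m}(Af)(u)$ is a finite sum of terms $f^{(k)}\bigl((2r+1)^{-(r+1/2)}u^{2r+1}\bigr)\,p_{m,k}(u)$ with polynomial $p_{m,k}$, and since $f^{(k)}$ decays faster than any power while its argument grows like $|u|^{2r+1}$, the function $Af$ and all its derivatives are rapidly decreasing. For the vanishing of the odd-order derivatives at the origin, write $f=f_e+f_o$; as $u\mapsto u^{2r+1}$ is odd, the odd part of $Af$ is $Af_o$, and writing $f_o(t)=t\,\widetilde{f}(t^{2})$ with $\widetilde{f}$ smooth gives $Af_o(u)=(2r+1)^{-(r+1/2)}u^{2r+1}\,\widetilde{f}\bigl((2r+1)^{-(2r+1)}u^{4r+2}\bigr)$, which vanishes to order $2r+1$ at $u=0$. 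Hence $\partial^{2s+1}(Af)(0)=0$ for $s=0,1,\dots,r-1$, that is, $Af\in\mathcal{S}_r(\mathbb{R})$.

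It then remains to apply Proposition~\ref{lem11}: $A\mathcal{F}_{\kappa,a}(f)=\mathcal{F}_{r}^{\lambda}(Af)\in\mathcal{S}_r(\mathbb{R})$, and unwinding $A$ yields $\mathcal{F}_{\kappa,a}(f)\bigl((2r+1)^{-(r+1/2)}v^{2r+1}\bigr)\in\mathcal{S}_r(\mathbb{R})$, which is the first assertion. Applying the inverse $A^{-1}g(x)=g\bigl((2r+1)^{1/2}x^{1/(2r+1)}\bigr)$ to $g:=A\mathcal{F}_{\kappa,a}(f)$ gives the equivalent reformulation $\mathcal{F}_{\kappa,a}(f)(v)=g\bigl((2r+1)^{1/2}v^{1/(2r+1)}\bigr)$ with $g\in\mathcal{S}_r(\mathbb{R})$; since $|y|^{a/2}=|y|^{1/(2r+1)}$, a function in $\mathcal{S}_r(\mathbb{R})$ composed with $v\mapsto(2r+1)^{1/2}v^{1/(2r+1)}$ has precisely the form $F_1(|v|^{a/2})+vF_2(|v|^{a/2})$ with even $F_1,F_2\in\mathcal{S}(\mathbb{R})$, which is consistent with \eqref{eq27}.

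The proof presents no real obstacle: its content is the elementary verification that $A$ carries $\mathcal{S}(\mathbb{R})$ into $\mathcal{S}_r(\mathbb{R})$ --- in particular the order-$(2r+1)$ vanishing of the odd part --- after which the corollary follows directly from Proposition~\ref{lem11}. The only place requiring care is keeping track of the exponents $(2r+1)^{\pm(r+1/2)}$ and of the even/odd decomposition throughout the change of variables.
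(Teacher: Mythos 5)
Your proposal is correct and follows essentially the same route as the paper, whose proof is precisely the one-line observation that $f\in\mathcal{S}(\mathbb{R})$ implies $Af\in\mathcal{S}_r(\mathbb{R})$ followed by an application of Proposition~\ref{lem11} through the identity $A\mathcal{F}_{\kappa,a}(f)=\mathcal{F}_{r}^{\lambda}(Af)$. You merely fill in the (correct) details of the even/odd decomposition showing the order-$(2r+1)$ vanishing of $Af_o$ at the origin, with the only minor caveat that the boundary case $\lambda=-1/2$ (i.e.\ $\kappa=\frac{r}{2r+1}$) formally relies on the paper's subsequent remark extending Proposition~\ref{lem11} to $\lambda=-1/2$.
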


Now we discuss the case $\lambda=-1/2$.
\begin{remark}
If $\lambda=-1/2$, $r\in\mathbb{Z}_+$, then
\[
\delta_{-1/2}g(u)=\partial^{2}g(u)-r(2r+1)\,\frac{g(u)-g(-u)}{u^2},\quad e_{1}(uv,-1/2)=e^{-iuv} \ (r=0).
\]
Taking into account \eqref{eq7} and  passing to the limit in \eqref{eq7} as $\lambda\to -1/2$, we deduce that for $r\ge 1$
\begin{align*}
e_{2r+1}(uv,-1/2)&=\cos{}(uv)+
i(-1)^{r+1}\,\frac{(uv)^{2r+1}}{2^{2r+1}(1/2)_{2r+1}}
j_{2r+1/2}(uv)\\
&=e^{-iuv}-(r+1/2)\int_{-1}^{1}\sum_{s=0}^{r-1}(-1)^s\binom{r}{s+1}\frac{(r+3/2)_s}{s!}\,(1-t^2)^ste^{-iuvt}\,dt.
\end{align*}
Taking this into account  and analyzing the proofs above,
we note that
all mentioned results in this section for the transform $\mathcal{F}_{r}^{\lambda}$ in the case $\lambda>-1/2$ are also valid for $\lambda=-1/2$. In particular,
  $\mathcal{F}_{r}^{-1/2}$, $r\in\mathbb{Z}_+$, are the unitary transforms in the non-weighted  $L^2(\mathbb{R},dx)$, where
 $\mathcal{F}_{0}^{-1/2}$ corresponds to the classical Fourier transform.
\end{remark}

\vspace{0.5cm}

\end{document}